\documentclass[10pt]{amsart}

\setlength{\textheight}{225truemm}
\setlength{\textwidth}{140truemm}
\setlength{\headheight}{14.0truemm}
\setlength{\hoffset}{-10truemm} 
\setlength{\voffset}{-20truemm} 

\numberwithin{equation}{section} 

\usepackage{fancyhdr}
\pagestyle{fancy}
\fancyhf{}  
\fancyhead[RO,LE]{\thepage}  
\fancyhead[CO]{\it Picard groups at chromatic level $2$ for $p=3$}  
\fancyhead[CE]{\it Paul Goerss, Hans-Werner Henn,
Mark Mahowald and Charles Rezk}

\usepackage{bm}
\usepackage{amssymb}
\usepackage{xypic}

\addtolength{\parskip}{3mm}
\newtheorem{thm}{Theorem}[section]
\newtheorem{cor}[thm]{Corollary}
\newtheorem{lem}[thm]{Lemma}
\newtheorem{prop}[thm]{Proposition}
\theoremstyle{definition}	
\newtheorem{defn}[thm]{Definition}
\newtheorem{rem}[thm]{Remark}
\newtheorem{const}[thm]{Construction}
\newtheorem{exam}[thm]{Example}

\newcommand{\bracketsix}[6]{{{\left\{ \begin{array}{ll}
				{{#1}}&{{#2}}\\
				{{#3}}&{{#4}}\\
			          {{#5}}&{{#6}}\end{array}\right.}}}
\def\defeq{\overset{\mathrm{def}}=}

\def\goth{\mathfrak}

\def\a{\alpha}
\def\b{\beta}

\def\Ga{\Gamma}

\def\cC{\mathcal{C}}

\def\cK{\mathcal{K}}
\def\cL{\mathcal{L}}

\def\cO{\mathcal{O}}

\def\cS{\mathcal{S}}

\def\FF{\mathbb{F}}
\def\GG{\mathbb{G}}

\def\SS{\mathbb{S}}
\def\W{\mathbb{W}}
\def\Z{\mathbb{Z}}
\def\ZZ{{{\Z}}}

\def\Aut{\mathrm{Aut}}
\def\End{{{\mathrm{End}}}}
\def\Ext{\mathrm{Ext}}
\def\Gal{\mathrm{Gal}}
\def\Hom{\mathrm{Hom}}

\def\lim{\mathrm{lim}}

\def\Pic{{\mathrm{Pic}}}
\def\longr{{{\longrightarrow\ }}}
\def\alg{{{\mathrm{alg}}}}

\def\Picalg{{{(\Pic_n)_{\alg}}}}
\def\PicalgO{{{(\Pic_n)_{\alg}^0}}}
\def\Gl{{{\mathrm{Gl}}}}
\def\Laa{{{\cL}}}
\def\Ka{{{\cK}}}
\def\Et{{{(E_2)}}}

\def\Zdet{{{S^0\langle\det\rangle}}}

\def\map{{{\mathrm{map}}}}
\def\det{{{\mathrm{det}}}}
\def\SGG{{{\mathrm{S}\GG}}}

\newcommand{\f}{\mathbb{F}_{p^n}}

\newcommand{\G}{\mathbb{G}}
\newcommand{\s}{\mathbb{S}}

\date{\today} 
\begin{document}

\title{On Hopkins' Picard groups for the prime $3$ and 
chromatic level $2$} 

\begin{abstract}
We give a calculation of Picard groups $Pic_2$  
of $K(2)$-local invertible spectra and $Pic(\cL_2)$ 
of $E(2)$-local invertible spectra, both at the prime $3$.  The
main contribution of this paper is to calculation the subgroup
$\kappa_2$ of invertible spectra $X$ with $(E_2)_\ast X
\cong (E_2)_\ast S^0$ as twisted modules over the
Morava stabilizer group $\GG_2$.
\end{abstract}
 
\author{Paul Goerss, Hans-Werner Henn, Mark Mahowald
and Charles Rezk}
\thanks{The first and fourth authors were partially supported by the National Science Foundation. The second author was partially supported by ANR ``HGRT''}

\address{Department of Mathematics, 
Northwestern University, Evanston, IL 60208, U.S.A.}  
\address{Institut de Recherche Math\'ematique Avanc\'ee,
C.N.R.S. - Universit\'e de Strasbourg, F-67084 Strasbourg,
France}
\address{Department of Mathematics, 
Northwestern University, Evanston, IL 60208, U.S.A.}  
\address{Department of Mathematics, 
University of Illinois at Urbana-Champaign, Urbana Champaign, IL 61801, 
U.S.A.}
 
\maketitle 

\section{Introduction}   

\medskip

Let $\cC$ be a symmetric monoidal category with product $\wedge$ and
unit object $I$.  An object $X\in \cC$ is {\it invertible} if
there exists an object $Y$ and an isomorphism $X\wedge Y\cong I$. If the
collection of isomorphism classes of invertible objects is a set,
then $\wedge$ defines a group structure on this set. This group is
called the  {\it Picard group} of $\cC$ and is denoted by $Pic(\cC)$. 
It is a basic invariant of the category $\cC$.

An immediate example from stable homotopy is the category $\cS$ of
spectra and homotopy classes of maps. However,
the answer turns out to be unsurprising: the only invertible spectra
are the sphere spectra $S^n$, $n \in \ZZ$ and the map $\ZZ \to \Pic(\cS)$
sending $n$ to $S^n$ is an isomorphism. See \cite{Picard}.

An insight due to Mike Hopkins is that the problem becomes considerably
more interesting if we pass to the localized stable homotopy categories
which arise in chromatic stable homotopy theory; that is, if we localize
the stable category with respect to complex orientable cohomology
theories.  We fix a prime $p$ and work with $p$-local
theories -- for example, the Morava $K$-theories $K(n)$, $n \geq 1$,
or the closely related Johnson-Wilson theories $E(n)$.
We will write $L_E$ for localization
with respect to $E$; however it is customary to abbreviate $L_{E(n)}$
as $L_n$. We will also write $\cK_n$ for the homotopy category of
$K(n)$-local spectra and $\cL_n$ for the category of $E(n)$-local
spectra. See \cite{HS} for
a great deal of information about the basic structure of these categories.

If $E$ is any homology theory, then the homotopy
category of $E$-local spectra has a symmetric monoidal structure
given by smash product with unit $L_ES^0$.
The main project of this paper is to study the Picard group of the 
categories $\cK_2$ and $\cL_2$ at the prime $3$. The Picard
groups of $\cK_1$ and $\cL_1$ were investigated in \cite{HMS}.
A feature of that paper is the observation
that $\Pic(\cL_1)$ at $p=2$
had an exotic element not immediately detected by algebraic methods;
we will investigate similar phenomena.

There are two steps to the calculation of the Picard group of these
localized stable homotopy categories. The first is algebraic.
If $E_\ast$ is a homology theory with $E_\ast E$ flat over $E_\ast$,
we denote by $\Pic(E)_\alg$ the Picard group of 
invertible $E_\ast E$-comodules. For
$\cL_2$ and $\cK_2$ this group is known and recorded
in Section \ref{alg-pic-sec} below. In both of these cases, there
is then a homomorphism from the  Picard group of the $E$-local
stable homotopy category 
$$
\Pic(\cL_E) \longr \Pic(E)_\alg
$$
sending $X$ to $E_\ast X$. In our examples this will be onto and the
interest then turns to the kernel. This is the group of {\it exotic} elements:
those $E$-local invertible spectra so that $E_\ast X \cong E_\ast$ 
as $E_\ast E$ comodules. For $\cK_2$ and $\cL_2$ for $p > 3$,
there are no non-trivial exotic elements. 
Write $\kappa_2$ and $\kappa(\cL_2)$ for the group of
exotic elements in $\cK_2$ and $\cL_2$ respectively at $p=3$.
Kamiya and Shimomura \cite{shiPic} have shown that
$\kappa(\cL_2)$ is
either of order $3$ or order $9$; in particular, it is known to be
non-trivial. Our main result shows that it is of order $9$.

\begin{thm}\label{main-thm} Let $\kappa_2$ be the group of
exotic elements in the $K(2)$-local Picard group $\Pic(\cK_2)$ at
$p=3$. Then there is an isomorphism
$$
\kappa_2 \cong \Z/3 \times \Z/3.
$$
Furthermore the localization map
$$
\kappa(\cL_2) \longr \kappa_2
$$
is an isomorphism.
\end{thm}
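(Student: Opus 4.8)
The plan is to compute $\kappa_2$ by means of the descent (homotopy fixed point) spectral sequence for the Picard spectrum, and then to compare $\cL_2$ with $\cK_2$ through the chromatic fracture square. Writing $\goth{pic}(E_2)$ for the Picard spectrum of $E_2$, the Gepner--Lawson descent machinery (in the form used by Mathew--Stojanoska) produces a spectral sequence
$$
E_2^{s,t} = H^s(\GG_2; \pi_t\,\goth{pic}(E_2)) \Longrightarrow \pi_{t-s}\,\goth{pic}(L_{K(2)}S^0),
$$
whose abutment in total degree $0$ is $\Pic(\cK_2)$. The crucial structural input is that $\pi_0\,\goth{pic}(E_2) = \Pic(E_2) = \Z/2$, that $\pi_1\,\goth{pic}(E_2) = (E_2)_0^\times$, and that $\pi_t\,\goth{pic}(E_2) \cong \pi_{t-1}(E_2)$ for all $t \geq 2$. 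Consequently, in filtrations $s \geq 2$ the Picard spectral sequence is identified, after a shift, with the $K(2)$-local $E_2$-Adams (homotopy fixed point) spectral sequence for the sphere; under this identification the exotic subgroup $\kappa_2 \subseteq \Pic(\cK_2)$ -- the part detected in filtration $s \geq 2$ -- corresponds to a subquotient of the $(-1)$-stem, assembled from the groups $H^s(\GG_2;(E_2)_{s-1})$ of the additive spectral sequence.

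First I would import the algebraic computation of $H^\ast(\GG_2;(E_2)_\ast)$ at $p = 3$ from the earlier determination of $\pi_\ast L_{K(2)}S^0$, organized via the algebraic (duality) resolution of the trivial module by modules induced up from the relevant finite subgroups of $\GG_2$. This pins down the groups $H^s(\GG_2;(E_2)_{s-1})$ on the $(-1)$-line, which are the only potential contributors to $\kappa_2$. Next I would run the differentials: by the comparison theorem of Heard--Mathew--Stojanoska, the $d_r$ in the Picard spectral sequence agree with those in the additive spectral sequence below a stable range governed by $p$, the first possible discrepancy being an edge differential that differs from the additive one by an explicit quadratic (``cup-square'') correction term. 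The heart of the argument is to evaluate these first exotic differentials, to check which classes on the $(-1)$-line survive, and to resolve the remaining additive extension problem; the outcome is meant to be two surviving cyclic factors, giving $\kappa_2 \cong \Z/3 \times \Z/3$.

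The hardest step is precisely this determination of the exotic differentials together with the extension: one must identify the correction terms explicitly and verify that no further differential collapses a $\Z/3$, since a miscount here is exactly the difference between order $3$ and order $9$. Controlling this requires the detailed module structure of the $(-1)$-line produced by the duality resolution, and this is where the finite-subgroup cohomology does the real work.

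For the second statement I would use the chromatic fracture square expressing $L_2 X$ as the homotopy pullback of $L_{K(2)}X \to L_1 L_{K(2)}X \leftarrow L_1 X$. Since an exotic element of $\kappa(\cL_2)$ has trivial $E(2)$-homology, its $K(2)$- and $K(1)$-localizations are again exotic; as there are no nontrivial exotic elements in the $K(1)$-local Picard group at $p = 3$ (so $\kappa_1 = 0$ and $L_1 X \simeq L_1 S^0$), such an $X$ is recovered from $L_{K(2)}X$ together with the gluing datum over $L_1 L_{K(2)}S^0$. Rigidity of this gluing gives injectivity of the localization map $\kappa(\cL_2)\to\kappa_2$, while conversely every $\cK_2$-exotic spectrum can be glued to $L_1 S^0$ to produce an $\cL_2$-exotic lift, the relevant obstruction vanishing because $\kappa_1 = 0$; this yields surjectivity. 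Combined with the computation $\kappa_2 \cong \Z/3\times\Z/3$ of order $9$, the localization map is an isomorphism, which simultaneously sharpens the Kamiya--Shimomura bound $|\kappa(\cL_2)| \in \{3,9\}$ to the value $9$.
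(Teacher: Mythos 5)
Your strategy for the first statement --- running the descent spectral sequence for the Picard spectrum $\goth{pic}(E_2)$ and comparing it with the additive homotopy fixed point spectral sequence --- is genuinely different from what the paper does, and in principle it is a reasonable modern framework. But as written it is a plan, not a proof: everything you correctly identify as ``the heart of the argument'' is left undone, and that is precisely where all of the mathematical content of the theorem lives. Concretely, you would need (i) the sparseness and vanishing-line statements (Proposition \ref{ANSS-dets} and Theorem \ref{van-line}) to see that the only possible contribution to the exotic part is $H^5(\GG_2,(E_2)_4)\cong\Z/3\times\Z/3$; and (ii) a proof that both classes there are permanent cycles in the Picard spectral sequence and are not hit by the exotic $d_5$ off the unit line $H^0(\GG_2,(E_2)_0^\times)$. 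Step (ii) is exactly the surjectivity of the paper's homomorphism $\tau$, and the paper can only establish it by \emph{constructing} the invertible spectra: the element $P$ with $E_2^{hG_{24}}\wedge P\simeq\Sigma^{48}E_2^{hG_{24}}$ comes from re-gluing the topological duality resolution after a $48$-fold suspension (Theorem \ref{exotic-eo}), and the remaining $\Z/3$ comes from the fibers $Q_b$ of the self-maps $T+b$ of $E_2^{h\GG_2^1}$ (Theorem \ref{tau-sur-1}). The comparison-theorem control of differentials does not hand you these survivals for free at $p=3$, $n=2$: the relevant classes sit in filtration $5$, so already the $d_5$ carries the quadratic correction term, the target $H^{10}(\GG_2,(E_2)_8)$ need not vanish at $E_2$ (there is no vanishing line before $E_{10}$), and the $d_9$ is entirely outside the range of the universal formulas. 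So there is a genuine gap here: the claim that ``the outcome is meant to be two surviving cyclic factors'' is the theorem, not an argument for it.

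For the second statement your fracture-square outline matches the paper's, but the justification is off in two places. Surjectivity of $\kappa(\cL_2)\to\kappa_2$ requires gluing a given $X\in\kappa_2$ to $L_1S^0$ over $L_1L_{K(2)}X$, and the obstruction to this being an \emph{invertible} gluing is not controlled by $\kappa_1=0$: the spectrum $L_1L_{K(2)}X$ is the $E(1)$-localization of a $K(2)$-local spectrum, so it carries rational as well as $K(1)$-local information, and what one actually needs is the chromatic-splitting-type statement (Theorem \ref{chrom-split-1-redux}, quoted from \cite{GHM}) that $\pi_0L_1X\cong\Z_3$ injects under the localized Hurewicz map and that a generator induces an equivalence $L_1L_{K(2)}S^0\simeq L_1X$ compatible with the module structure; this is what lets one build the inverse triple. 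Second, your appeal to ``rigidity of the gluing'' for injectivity is unexplained; the paper instead gets injectivity for free by combining surjectivity onto a group of order $9$ with the Kamiya--Shimomura upper bound $\kappa(\cL_2)\subseteq\Z/3\oplus\Z/3$, so that bound is an input to the argument rather than a corollary of it.
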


The first part of this result is proved in section 5; the second in section 6.
Along the way
we will take the opportunity to revisit some of the standard results of $K(2)$-local
homotopy theory; for example, in section 4, we will show that the $K(2)$-local
Adams-Novikov Spectral Sequence for any exotic element of the Picard
group has a global vanishing line
at the $E_\infty$-page, although it has no such line at $E_2$.

Combining the algebraic calculation with Theorem \ref{main-thm} now the completes
the calculation for $p=3$. Let $\ZZ_p$ denote the $p$-adic integers.

\begin{thm}\label{total-pic} There are isomorphisms
$$
\Pic(\cK_2) \cong \Z_3 \times \Z_3 \times \Z/16 \times \Z/3 \times \Z/3.
$$
and
$$
\Pic(\cL_2) \cong \ZZ \times  \Z/3 \times \Z/3.
$$
\end{thm}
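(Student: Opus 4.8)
The plan is to read both groups off the fundamental exact sequence
$$
0 \longr \kappa(\cL_E) \longr \Pic(\cL_E) \longr \Pic(E)_\alg \longr 0,
$$
in which the right-hand map sends $X$ to the comodule $E_\ast X$ and is known to be surjective in our two cases, and whose kernel is by definition the group of exotic elements. Everything then reduces to feeding in the algebraic Picard groups of Section \ref{alg-pic-sec} and the computation of the exotic subgroups from Theorem \ref{main-thm}, and to solving the resulting extension problem; there is no further homotopy theory to do at this stage.

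For $\Pic(\cL_2)$ the extension is trivial to resolve. Here $\Pic(E(2))_\alg$ is the infinite cyclic group generated by the suspension $\Sigma L_2S^0$ — the integer grading being the only algebraic invariant — and Theorem \ref{main-thm} identifies the exotic subgroup $\kappa(\cL_2)$ with $\kappa_2 \cong \Z/3 \times \Z/3$. Since $\ZZ$ is free, the sequence $0 \to \Z/3\times\Z/3 \to \Pic(\cL_2) \to \ZZ \to 0$ splits, giving $\Pic(\cL_2) \cong \ZZ \times \Z/3 \times \Z/3$.

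For $\Pic(\cK_2)$ I must split the extension
$$
0 \longr \Z/3 \times \Z/3 \longr \Pic(\cK_2) \longr \Z_3 \times \Z_3 \times \Z/16 \longr 0,
$$
whose right-hand term is the algebraic Picard group computed in Section \ref{alg-pic-sec} and whose kernel is again supplied by Theorem \ref{main-thm}. I would argue in two stages. First, the cyclic factor $\Z/16$ is prime to $3$ whereas $\kappa_2$ is a $3$-group, so it lifts canonically and splits off; concretely this exhibits $\Pic(\cK_2)$ as the product of its prime-to-$3$ part $\Z/16$ (carried by the suspensions $S^k$, whose image generates the grading factor) with its pro-$3$ part $P$. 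Second, I would split the remaining pro-$3$ extension $0 \to \Z/3\times\Z/3 \to P \to \Z_3\times\Z_3 \to 0$ using the profinite structure: all the groups in sight are finitely generated profinite abelian groups and the maps are continuous, so $P \cong \Z_3^{\,r}\times F$ with $F$ a finite $3$-group, the $\Z_3$-rank $r$ is forced to equal $2$ because the kernel is finite, and a comparison of torsion subgroups then forces $F \cong \Z/3\times\Z/3$. This yields $\Pic(\cK_2)\cong \Z_3\times\Z_3\times\Z/16\times\Z/3\times\Z/3$.

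The one genuinely essential input — and the place I expect the difficulty to hide — is the structural fact that $\Pic(\cK_2)$ is a finitely generated profinite abelian group with the displayed sequence continuous. Abstractly the extension need not split, since $\Z_3\times\Z_3$ is \emph{not} a projective object in the category of abstract abelian groups; it is exactly the profinite topology, under which $\Z_3\times\Z_3$ is a free and hence projective $\Z_3$-module, that forces the splitting and that licenses the rank-and-torsion bookkeeping above. This profiniteness is controlled by the convergence of the $K(2)$-local Adams--Novikov spectral sequence and in particular by the $E_\infty$-vanishing line for exotic classes established in Section~4, together with the surjectivity onto $\Pic(E_2)_\alg$ realized by the suspensions and the determinant sphere $\Sdet$, among others; once these are in hand, both isomorphisms follow formally.
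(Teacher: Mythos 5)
Your proposal follows the paper's route: Theorem \ref{total-pic} is obtained there simply by splicing the algebraic computation of Theorem \ref{pnot2} (resp.\ Proposition \ref{pic-l-n} together with Theorem \ref{picla2}) into the exact sequence $0 \to \kappa \to \Pic \to \Pic_{\alg}$, exactly as you do, and the paper leaves the resulting extension problem implicit. The only place you diverge is in how you justify the splitting of $0 \to \Z/3\times\Z/3\to\Pic(\cK_2)\to\Z_3^2\times\Z/16\to 0$, and there your diagnosis of where the difficulty hides is off: the profinite topology on $\Pic(\cK_2)$ is not needed. The $\Z/16$ factor lifts because the kernel is a $3$-group (if $g$ maps to a generator $t$, then $g^3$ has order $16$ and maps to the generator $t^3$), and for the remaining part one has $\Ext^1_{\ZZ}(A,F)=0$ for \emph{any} torsion-free abelian group $A$ and any finite $F$: from the injective resolution $0\to\Z/n\to\QQ/\ZZ\to\QQ/\ZZ\to 0$ one gets $\Ext^1_{\ZZ}(A,\Z/n)\cong\Hom(A,\QQ/\ZZ)/n$, and $\Hom(A,\QQ/\ZZ)$ is divisible whenever $A$ is torsion-free because $\Hom(-,\QQ/\ZZ)$ carries the injection $n\colon A\to A$ to a surjection. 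So the extension splits as abstract abelian groups, and the profiniteness of $\Pic(\cK_2)$ --- which is true but would require the additional convergence arguments you sketch --- can be bypassed entirely; the rest of your bookkeeping is correct.
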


For more precise statements and details see Theorem \ref{pnot2} and Proposition \ref{pic-l-n}.

It is worth emphasizing that the two summands of $\kappa_2$ do not
have equal dignity. At $p=2$ and $n=1$, the exotic element of
$\kappa_1$ can be detected by real $K$-theory; that is, there is
an element of the Picard group so that $K_\ast X \cong K_\ast S^0$,
but $KO_\ast X \not\cong KO_\ast S^0$. In the same way, at $p=3$ and
$n=2$, there are elements of the Picard group so that $\Et_\ast X
\cong \Et_\ast S^0$, but for which $(E_2^{hG})_\ast X \not\cong
(E_2^{hG})_\ast S^0$ for some finite group $G$ acting on $E_2$. However,
there are also non-trivial elements of $\kappa_2$ so that 
$$
(E_2^{hG})_\ast X \cong (E_2^{hG})_\ast S^0
$$
for all finite groups acting on $E_2$. Detecting and constructing
these truly exotic elements is the novel part of our argument. 

\section{Background}

In this section we work on the basic technology and known results
behind our work. Recall we have a fixed prime $p$, which will 
eventually be $3$.

\subsection{Morava $K$-theory and related homology 
theories}\label{E-theory-sec}

We will work with $p$-local $2$-periodic complex oriented
cohomology theories $E$ equipped with a $p$-typical formal group law
$F$. Thus $E_\ast \cong E_0[u^{\pm 1}]$ where $u$ is a unit in degree
$-2$ and the associated formal group law $F$ is determined by the
$p$-series
$$
[p]_F(x) = px +_F v_1x^p +_F v_2x^{p2}+_F\cdots.
$$
where $v_i$ is in degree $2(p^i-1)$.

Basic examples of such theories are the Morava $K$-theories
$K(n)_\ast(-)$ with
$$
K(n)_\ast = \FF_p[u^{\pm 1}]
$$
where $u$ is in degree $2$; the formal group law is
the Honda formal group law $\Ga_n$ of height $n$ with
$p$-series $[p]_{\Ga_n}(x) = u^{-(p^n-1)}x^{p^n}$.

There is an associated Landweber exact homology theory
$(E_n)_\ast$ -- the Morava $E$-theory, also known as the 
Lubin-Tate theory. It is customary to 
choose the $p$-typical formal group law over $(E_n)_\ast$
with
\begin{equation}\label{LT-vi}
v_i = \bracketsix{u^{-(p^i-1)}u_i,}{1 \leq i < n;}{u^{-(p^n-1)},}{i=n;}{0,}{i > n.}
\end{equation}
This equation defines elements $u_i$ in $(E_n)_0$ and 
an isomorphism
$$
W(\FF_{p^n})[[u_1,\cdots,u_{n-1}]] \cong (E_n)_0
$$
where $W(-)$ is the Witt vector functor. The associated formal group law
is the universal deformation of the Honda formal group law
over $\FF_{p^n} \otimes K(n)_\ast$. 

Define the (big) {\it Morava stabilizer group} $\GG_n$ to be 
the automorphisms of the pair $(\FF_{p^n},\Ga_n)$.
Since $\Ga_n$ is defined over $\FF_p$, we have
$$
\GG_n = \s_n \rtimes \Gal(\f/\FF_p)
$$
where $\s_n$ is the automorphisms of $\Ga_n$ over $\FF_{p^n}$.

The group $\GG_n$ acts on the deformations of $(\FF_{p^n},\GG_n)$
and hence on $(E_n)_\ast$. By the Hopkins-Miller Theorem
\cite{GoH} this action can be lifted uniquely to
$E_\infty$-ring spectra; that is, the group $\GG_n$ acts
on the spectrum $E_n$ through $E_\infty$-ring maps.
In \cite{DH}, Devinatz and Hopkins showed that for all 
closed subgroups $K \subseteq \GG_n$ there is a homotopy
fixed point spectrum $E_n^{hK}$ and a spectral sequence
\begin{equation}\label{descent-ss}
H^s(K,(E_n)_t) \Longrightarrow \pi_{t-s}E_n^{hK}.
\end{equation}
Here the group cohomology is continuous cohomology. If $K$ 
is finite, this is the usual fixed point spectral sequence. By contrast,
if $K=\GG_n$ itself, then $E_n^{hK} \simeq L_{K(n)}S^0$ and
this is the Adams-Novikov Spectral Sequence. Compare
(\ref{ANSS}) below.

The $2$-periodic Johnson-Wilson theories $E(n)$ have 
$$
E(n)_0 \cong \ZZ_{(p)}[u_1,\ldots,u_{n-1},u_n^{\pm 1}]
$$
and $p$-typical formal group law defined by the Equation
(\ref{LT-vi}). There is a morphism of homotopy commutative ring spectra
$E(n) \to E_n$ which, on coefficients, is the map indicated by the
notation. The associated homology theories
have the same acyclics and induce the same localization
functor, which we will call $L_n$, but note that the notation
$(E_n)_\ast(-)$ does {\bf not} mean the homology theory
associated to $E_n$.

\begin{rem}[{\bf Local smash products}]\label{E-smash}
If $X$ and $Y$ are $E$-local, then $X \wedge Y$ need not be
$E$-local; hence the natural smash product for $E$-local
spectra is
$$
X \wedge_{E} Y = L_{E} (X \wedge Y).
$$
The notation $\wedge_{E}$ is very cumbersome, however, so we
will drop it when it is understood. In particular, throughout most
of this paper, $X$ and $Y$ are $K(n)$-local spectra, and
$X \wedge Y$ means $X \wedge_{K(n)}Y$.
\end{rem}

From this point of view, the natural definition of $(E_n)_\ast(-)$ is 
$$
(E_n)_*X \defeq \pi_*L_{K(n)}(E_n\wedge X).
$$
Under appropriate circumstances (see \cite{HS}) $(E_n)_\ast X$ is a
completion of $\pi_\ast (E_n \wedge X$); however, $(E_n)_\ast(-)$ is not 
a homology theory because it does not take wedges to sums.
Since $\GG_n$ acts on $E_n$, $\GG_n$ acts on $(E_n)_\ast X$.
The $(E_n)_\ast$-module $(E_n)_\ast X$ is equipped with the
${\goth m}$-adic topology 
where ${\goth m}$ is the maximal ideal in $(E_n)_0$. This topology 
is always topologically complete, but need not be separated. 
With respect to this topology, the group $\GG_n$ acts through
continuous maps and the action is twisted because it
is compatible with the action of $\GG_n$ on the coefficient ring
$E_\ast$. See \cite{GHMR} \S 2 for
some precise assumptions which guarantee that $(E_n)_\ast X$ 
is complete and separated. We will call these modules twisted 
$\GG_n$-modules.

Let  $\map (\GG_n,(E_n)_\ast)$ be the $(E_n)_\ast$-module of
continuous maps out of $\GG_n$. We give this the diagonal
$\GG_n$-action; that is, 
\begin{equation}\label{diagonal}
(g\phi)(x) = g\phi(g^{-1}x).
\end{equation}
Then (by \cite{StricklandGH} among other sources) there is
an isomorphism of twisted $\GG_n$-modules
\begin{equation}\label{e-hom-e}
(E_n)_\ast E_n \cong \map (\GG_n,(E_n)_\ast).
\end{equation}
and we have an Adams-Novikov Spectral
Sequence
\begin{equation}\label{ANSS}
H^s(\GG_n,(E_n)_\ast X) \Longrightarrow \pi_{t-s}L_{K(n)}X.
\end{equation}
The $E_2$-term is continuous group homology.

\begin{rem}\label{morava-st}
We give some more detail about the Morava stabilizer group $\GG_n$.
Write $\W$ for the Witt vectors $W(\f)$ and let $\sigma$
be the lift of Frobenius $\phi$ on $\f$ to $\W$. Define $\W \langle S\rangle$
to be the non-commutative polynomial ring over $\W$ with
$wS=S\sigma(w)$ for $w \in \W$. Then define
$$
\cO_n=\W \langle S\rangle /(S^n=p).
$$ 
There is an isomorphism  $\cO_n \cong \End(\Ga_n)$. From this it follows that
$\s_n \cong \cO_n^{\times}$.

Note the canonical right action of $\s_n$ on $\cO_n$ determines a homomorphism
$$
\s_n \longr \Aut(\cO_n) \cong \Gl_n(\W).
$$
It is easy to check that the image of the composition
$$
\xymatrix{
\s_n \rto & \Gl_n(\W) \rto^-{\mathrm{det}} &\W^\times
}
$$
is exactly $\ZZ_p^\times \subseteq \W^\times$, so that
we get an induced homomorphism
$$
\GG_n \longr  \Gal(\f/\FF_p) \times \ZZ_p^\times.
$$
The element $1+p \in \ZZ_p^\times$ defines an isomorphism 
$\ZZ_p \cong \ZZ_p^\times/\mu$, where $\mu$ is 
the maximal finite subgroup. Using this identification we get
the {\it reduced norm}.
\begin{equation}\label{red-norm}
\xymatrix{
N: \GG_n  \rto & \ZZ_p.
}
\end{equation}
Define $\GG_n^1$ to be the kernel of this homomorphism.

The inclusion $a \mapsto a + 0S \in \SS_n$ defines an injection
$\ZZ_p^\times \to \GG_n$ onto the center of $\GG_n$. We use
$1+p$ to identify  $\ZZ_p$ with the subgroup of $\ZZ_p^\times$ of elements
congruent to $1$ modulo $p$. Then the
composition
$$
\ZZ_p \to \GG_n \to \ZZ_p
$$
of the inclusion followed by the reduced norm of Equation (\ref{red-norm})
is multiplication by $n$. Hence if $n$ is prime to $p$, we have a splitting
isomorphism
\begin{equation}\label{split-gn1}
\xymatrix{
\GG_n^1 \times \ZZ_p \rto^-\cong &\GG_n.
}
\end{equation}
\end{rem}

\begin{rem}\label{action-of-center}
The action of the center on $(E_n)_\ast$ is
quite simple. The inclusion $\ZZ_p \to \End(\f,\Ga_n)$ is
the completion of the morphism $\ZZ \to \End(\f,\Ga_n)$ sending
$q$ to the group homomorphism $q:\Ga_n \to \Ga_n$. This has
a canonical lift to an action of the universal deformation and,
hence, $q \in \ZZ_p^\times$ acts trivially on $(E_n)_0$ and by
$$
q_\ast u = qu
$$
in degree $-2$.
\end{rem}

\subsection{Picard groups}\label{alg-pic-sec} The completed tensor product over
$(E_n)_\ast $ endows the category of twisted $\GG_n$-modules with
a symmetric monoidal structure whose unit is $(E_n)_\ast S^0$. We begin our analysis
of the Picard group $\Pic_n$ of $\cK_n$ with the following result from
\cite{HMS}.

\begin{thm}\label{pic-car} Let $X$ be a $K(n)$-local spectrum. 
Then the following conditions are equivalent:
\begin{enumerate}

\item[a)] $X$ is invertible in $\cK_n$.

\item[b)] $(E_n)_*X$ is a free $(E_n)_*$-module of rank $1$.

\item[c)]  $(E_n)_*X$ is invertible with respect to the tensor product in the category of
twisted $\GG_n$-modules.
\end{enumerate}
\end{thm}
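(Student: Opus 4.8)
The plan is to prove the equivalences as a short cycle, isolating the single genuinely topological input. The purely algebraic step is $b) \Leftrightarrow c)$. Here $(E_n)_\ast$ is a complete graded local ring with maximal ideal ${\goth m}$ and residue field $K(n)_\ast = (E_n)_\ast/{\goth m}$, so after forgetting the $\GG_n$-action any module invertible for $\hat\otimes_{(E_n)_\ast}$ is a finitely generated projective of rank $1$, hence free of rank $1$; conversely a free rank-$1$ twisted $\GG_n$-module $M$ has as inverse its dual $\Hom_{(E_n)_\ast}(M,(E_n)_\ast)$ equipped with the contragredient twisted action, the evaluation pairing $M \hat\otimes \Hom_{(E_n)_\ast}(M,(E_n)_\ast) \to (E_n)_\ast$ being an isomorphism of twisted $\GG_n$-modules. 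Thus the real content is relating $a)$ to $b)$.

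For $a) \Rightarrow b)$ I would pass first to Morava $K$-theory, where a Künneth isomorphism is automatic: since $K(n)_\ast$ is a graded field and $K(n)$ is a field spectrum, $K(n)_\ast L_{K(n)}(X \wedge Y) \cong K(n)_\ast X \otimes_{K(n)_\ast} K(n)_\ast Y$ for all $X,Y$. If $X$ is invertible with inverse $Y$, then
$$
K(n)_\ast X \otimes_{K(n)_\ast} K(n)_\ast Y \cong K(n)_\ast,
$$
and since $K(n)_\ast$-modules are free, a tensor product of total rank $1$ forces $K(n)_\ast X$ to have rank $1$. Now $(E_n)_\ast X$ is ${\goth m}$-complete with $(E_n)_\ast X/{\goth m} \cong K(n)_\ast X$ one-dimensional, so the topological Nakayama lemma shows $(E_n)_\ast X$ is generated by a single element; a generator yields a surjection $(E_n)_\ast \to (E_n)_\ast X$ whose reduction modulo ${\goth m}$ is an isomorphism, which forces the map itself to be an isomorphism. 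Hence $(E_n)_\ast X$ is free of rank $1$.

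For $b) \Rightarrow a)$ I would use $K(n)$-local Spanier--Whitehead duality. Let $DX = F(X,L_{K(n)}S^0)$ and consider the evaluation map $L_{K(n)}(X \wedge DX) \to L_{K(n)}S^0$. If $(E_n)_\ast X$ is free of rank $1$ then $K(n)_\ast X = (E_n)_\ast X/{\goth m}$ is one-dimensional, so $X$ is $K(n)$-locally dualizable with $K(n)_\ast DX \cong \Hom_{K(n)_\ast}(K(n)_\ast X, K(n)_\ast)$; under the $K(n)_\ast$-Künneth isomorphism the evaluation map realizes the nondegenerate duality pairing of a one-dimensional graded vector space and so is a $K(n)_\ast$-isomorphism. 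Finally, any map of $K(n)$-local spectra inducing a $K(n)_\ast$-isomorphism is an equivalence, since its cofiber is $K(n)$-local and $K(n)$-acyclic, hence contractible. Thus $DX$ inverts $X$, so $X$ is invertible in $\cK_n$.

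The hard part is the completeness bookkeeping, not the formal structure above. One must know that $(E_n)_\ast X$ really is ${\goth m}$-complete and that reduction modulo ${\goth m}$ genuinely computes $K(n)_\ast X$, so that the topological Nakayama lemma legitimately lifts the rank-$1$ statement from $K(n)_\ast$ to $(E_n)_\ast$; and in $b) \Leftrightarrow c)$ one needs the completed tensor product $\hat\otimes_{(E_n)_\ast}$ and the contragredient action to be well behaved within the category of complete (pro-free) twisted $\GG_n$-modules, where the relevant topology is complete but a priori need not be separated. Establishing these completed statements, rather than their naive discrete analogues, is the technical heart of the argument; once they are granted, every implication above is formal.
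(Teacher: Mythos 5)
First, a point of reference: the paper does not prove this theorem at all --- it is imported verbatim from \cite{HMS} (see also \cite{HS}) --- so your proposal is being measured against the standard proof there rather than against an argument in this text. Your architecture is exactly that standard route: a K\"unneth argument over the graded field $K(n)_\ast$ to show $K(n)_\ast X$ is one--dimensional, a lift of that statement to $(E_n)_\ast$, and $K(n)$-local Spanier--Whitehead duality for b) $\Rightarrow$ a). The steps b) $\Leftrightarrow$ c) and b) $\Rightarrow$ a) are fine as written.

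The gap is in a) $\Rightarrow$ b), at the step ``a generator yields a surjection $(E_n)_\ast \to (E_n)_\ast X$ whose reduction modulo ${\goth m}$ is an isomorphism, which forces the map itself to be an isomorphism.'' That inference is false for complete modules in general: the surjection $\ZZ_p \to \ZZ/p$ becomes an isomorphism after reduction mod $p$ but is not injective. What is actually needed is that $(E_n)_\ast X$ is pro-free (equivalently flat), so that $\Tor_1^{(E_n)_\ast}((E_n)_\ast X, K(n)_\ast)=0$; only then does the kernel $J$ of the surjection satisfy $J={\goth m}J$ and hence vanish by Nakayama. Flatness is also precisely what justifies your other implicit step, the identification $(E_n)_\ast X/{\goth m}\cong K(n)_\ast X$ --- this fails, for instance, for a generalized Moore spectrum $S/I_n$, where the left-hand side is one--dimensional but $K(n)_\ast(S/I_n)$ is an exterior algebra on $n$ generators. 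So the ``technical heart'' you defer is not completeness or separatedness bookkeeping: it is the theorem that $K(n)_\ast X$ concentrated in a single degree forces $(E_n)_\ast X$ to be pro-free, which in \cite{HMS} and \cite{HS} is proved by climbing the tower of quotients $E_n/I_j$, $I_j=(p,u_1,\dots,u_{j-1})$, and checking that the multiplication-by-$u_j$ Bockstein sequences behave freely at each stage. Without that input the lift from $K(n)_\ast$ to $(E_n)_\ast$ does not go through, and it is the one step of the theorem that is not formal.
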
 

Write $\Picalg$ for the Picard group of the category
of twisted $\GG_n$-modules. Then
Theorem \ref{pic-car} implies that the assignment $X\mapsto (E_n)_*X$
defines a group homomorphism
$$
\varepsilon_n: \Pic_n\to \Picalg \ .
$$
The group $\Picalg$ contains the index $2$-subgroup 
$\PicalgO$ of invertible twisted $\GG_n$-modules
which are concentrated in even degrees. Likewise $\Pic_n$ contains 
a subgroup $Pic_n^0$ of index $2$ and $\epsilon$ restricts 
to a homomorphism 
$$
\varepsilon_n^0: \Pic_n^0\to (\Pic_n)_{\alg}^0
$$
If $M_\ast \in \PicalgO$, then $M_0$ is an invertible
$(E_n)_0$-module with an action of $\GG_n$. Standard
considerations now imply the following
result.

\begin{prop} There is a canonical isomorphism 
$$
(\Pic_n)_{\alg}^0\cong H^1(\GG_n,(E_n)_0^{\times})\ . 
$$
\end{prop}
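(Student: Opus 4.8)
The plan is to identify $\PicalgO$ with the set of isomorphism classes of rank-one free $(E_n)_0$-modules equipped with a continuous twisted $\GG_n$-action, and then to classify these by a standard cocycle argument.

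First I would reduce to degree zero. Suppose $M_\ast \in \PicalgO$. By Theorem \ref{pic-car}, $M_\ast$ is free of rank one over $(E_n)_\ast = (E_n)_0[u^{\pm 1}]$; choosing a generator in degree $0$ and using that $u$ is a unit in degree $-2$, multiplication by powers of $u$ identifies each $M_{2k}$ with $u^{-k}M_0$, so that
$$
M_\ast \cong (E_n)_\ast \otimes_{(E_n)_0} M_0
$$
as $(E_n)_\ast$-modules. Because $\GG_n$ acts on $(E_n)_\ast$ in a known way (Remark \ref{action-of-center}), the twisted $\GG_n$-action on $M_\ast$ is determined by its restriction to $M_0$, and conversely any rank-one free twisted $(E_n)_0$-module extends to an even invertible twisted $(E_n)_\ast$-module by the same formula. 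Thus $M_\ast \mapsto M_0$ is an equivalence onto the category of rank-one free $(E_n)_0$-modules with continuous twisted $\GG_n$-action and induces an isomorphism of the associated Picard groups.

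Next I would extract the cohomology class. Since $(E_n)_0 \cong W(\f)[[u_1,\dots,u_{n-1}]]$ is a complete local ring, an invertible $(E_n)_0$-module is free of rank one; fix a generator $m$ of $M_0$. As each $g \in \GG_n$ acts by a bijection that is semilinear over the ring action, $g(m) = c(g) m$ for a unique unit $c(g) \in (E_n)_0^\times$. The identity $(gh)(m) = g(h(m))$ together with twistedness gives the cocycle relation
$$
c(gh) = c(g)\, g(c(h)),
$$
and continuity of the action makes $c$ a continuous $1$-cocycle. Replacing $m$ by $bm$ with $b \in (E_n)_0^\times$ multiplies $c$ by the coboundary $g \mapsto g(b) b^{-1}$, so the class $[c] \in H^1(\GG_n,(E_n)_0^\times)$ is well defined and depends only on the isomorphism class of $M_0$.

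Finally I would check this assignment is an isomorphism. Conversely, a continuous cocycle $c$ defines a twisted structure on $(E_n)_0$ by $g \ast x = c(g) g(x)$, and the two constructions are mutually inverse; moreover on $M_0 \otimes_{(E_n)_0} N_0$ a product of generators yields the product cocycle, so $[M_\ast] \mapsto [c]$ is a group homomorphism. Combined with the degree-zero reduction this gives the canonical isomorphism $\PicalgO \cong H^1(\GG_n,(E_n)_0^\times)$. The one step deserving genuine care — hidden in the phrase ``standard considerations'' — is the topological bookkeeping: verifying that invertibility in the completed tensor structure really forces rank-one freeness of $M_0$, and that the resulting cocycle is continuous for the profinite topology on $\GG_n$ and the ${\goth m}$-adic topology on $(E_n)_0^\times$, so that $[c]$ genuinely lands in continuous cohomology. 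Once continuity is secured the remainder is the usual Hilbert-90-style classification of twisted line bundles.
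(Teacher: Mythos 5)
Your argument is correct and is exactly the ``standard considerations'' the paper invokes: the paper's proof consists only of the observation that $M_0$ is an invertible $(E_n)_0$-module with a $\GG_n$-action, and your reduction to degree zero followed by the crossed-homomorphism/coboundary bookkeeping (with the continuity and rank-one-freeness caveats you flag) is the intended expansion of that remark. No substantive difference in approach.
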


\begin{rem}\label{basic-elements} There are two 
basic elements of $H^1(\GG_n,(E_n)_0^{\times})$ which
play a particularly important role in all calculations; for example,
in case $n=2$ they will be topological generators
of $H^1(\GG_n,(E_n)_0^{\times})$. Recall that $1$-cocycles
in group cohomology can be represented by crossed 
homomorphisms; that is, functions
$$
\phi:\GG_n \longr (E_n)^\times
$$
so that $\phi(gh) = [g\phi(h)]\phi(h)$. This formula is multiplicative
because the group operation on $(E_n)^\times$ is multiplication.

1.) The image of the the $K(n)$-local $2$ sphere  $L_{K(n)}S^2$ 
under the homomorphism $\varepsilon_n$ determines a crossed
homomorphism
$$
\eta:\GG_n\longr (E_n)_0^{\times}
$$
given by the formula
$$
g_\ast(u) = \eta(g)u
$$
where  $u \in (E_n)_{-2} = (E_n)_0S^2$
is the canonical generator.

2.) The second element, which we denote as  $(E_n)_0\langle\det\rangle$,
is the (genuine) homomorphism given as the composition of
the determinant map and the canonical inclusion
$$
\xymatrix{
\GG_n\rto^-\det & \Z_p^{\times}\rto^-\subseteq & (E_n)_0^{\times} \ .
}
$$
\end{rem}

\begin{rem}\label{det-realized}
The element $(E_n)_0\langle\det\rangle$ of $(\Pic_n)_{\alg}^0$
has a canonical topological realization, which we now define. We
assume $p > 2$.

Let $\SGG_n$ be the kernel of the determinant $\det:\GG_n \to \ZZ_p^\times$
and let $\mu \subseteq \ZZ_p^\times$ be the subgroup of $(p-1)$st roots of unity.
Then $\mu$ acts on $(E_n)^{h\SGG_n}$. Let $(E_n)^{h\SGG_n}_\chi$
be the wedge summand of $(E_n)^{h\SGG_n}$ defined by the
character $\chi:\mu\to \ZZ_p^\times$ with $\chi(g) = g^{-1}$. 
Let  $\psi^{p+1} = (p+1) + 0S \in \GG_n$ be our chosen generator of the  subgroup
of the center given by  elements congruent to $1$ modulo $p$; see Remark \ref{morava-st}.
Then $\psi^{p+1}$
maps to a topological generator of $\GG_n/\SGG_n$. 
Define $\Zdet$ by  the fiber sequence using the difference
\begin{equation}\label{SdetDefine}
\xymatrix{
\Zdet \ar[rr] && (E_n)^{h\SGG_n}_\chi \ar[rrr]^-{\psi^{p+1} - \det(\psi^{p+1})} &&
& (E_n)^{h\SGG_n}_\chi\ .
}
\end{equation}
Then we assert that $(E_n)_0\Zdet = (E_n)_0\langle\det\rangle$
as twisted $\GG_n$-module.

From \cite{GHMR} \S 2, we have an isomorphism
of Morava modules
\begin{equation}\label{ghmrS2}
(E_n)_0E_n^{h\SGG_n} \cong \map (\GG_n/\SGG_n,(E_n)_0)
\cong  \map (\ZZ_p^\times,(E_n)_0)\ .
\end{equation}
where the twisted $\GG_n$-action on the set of continuous maps is given by
$(g\phi)(x) = g\phi(g^{-1}x)$. It follows that
$$
(E_n)_0(E_n)^{h\SGG_n}_\chi\cong
\map(\ZZ_p,(E_n)_0\otimes \chi).
$$
The calculation
of $(E_n)_0\Zdet$ now follows from the long exact sequence in $(E_n)_\ast$ of the
fibration (\ref{SdetDefine}).
\end{rem}
 
\begin{defn}\label{exotic-pc} Define the group $\kappa_n$ of {\bf
exotic elements} of $\Pic_n$ to be the kernel of
$$
\varepsilon_n^0: \Pic_n^0\to (\Pic_n)_{\alg}^0 \ .
$$
\end{defn}
 
The group $\kappa_n$ measures the difference between the
homotopy theoretic and algebraic definition of the Picard group
in the $K(n)$-local category. The first result is negative: the following
Proposition says that $\kappa_n=0$ if $n$ is large with respect to $p$.
See \cite{Picard}.

\begin{prop}\label{vanish-kappa} Suppose $n$ is not divisible by 
$p-1$ and $n^2\leq 2p-2$. Then
$$
\kappa_n=0.
$$
\end{prop}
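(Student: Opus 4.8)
The plan is to show that any spectrum $X$ representing a class in $\kappa_n$ is equivalent to $S^0$, by producing a map $S^0 \to X$ which is an $(E_n)_\ast$-isomorphism. Since $X$ lies in the kernel of $\varepsilon_n^0$, its even invertible Morava module is trivial, so I may fix an isomorphism $(E_n)_\ast X \cong (E_n)_\ast$ of twisted $\GG_n$-modules and let $\iota \in (E_n)_0 X$ be the image of $1$. Then $\iota$ generates $(E_n)_\ast X$ over $(E_n)_\ast$ and defines a class in the $(s,t)=(0,0)$ position $H^0(\GG_n,(E_n)_0 X)$ of the Adams--Novikov spectral sequence (\ref{ANSS}), $H^s(\GG_n,(E_n)_t X)\Rightarrow \pi_{t-s}L_{K(n)}X=\pi_{t-s}X$. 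If $\iota$ is a permanent cycle, it is detected by a class $f\in\pi_0 X$, i.e. a map $f\colon S^0\to X$ with $(E_n)_0 f(1)=\iota$; being $(E_n)_\ast$-linear and carrying the generator $1$ to the generator $\iota$, this map is an isomorphism on $(E_n)_\ast$, hence a $K(n)$-local equivalence (its cofiber has vanishing $(E_n)_\ast$, so is $K(n)$-locally trivial, as in Theorem \ref{pic-car}). Thus $[X]=0$ and $\kappa_n=0$, and everything reduces to showing that $\iota$ survives.

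The differentials off the corner are $d_r\colon E_r^{0,0}\to E_r^{r,r-1}$, and $E_r^{r,r-1}(X)$ is a subquotient of $E_2^{r,r-1}(X)=H^r(\GG_n,(E_n)_{r-1}X)\cong H^r(\GG_n,(E_n)_{r-1})$, the last isomorphism because $X$ is exotic. Since $(E_n)_\ast$ is concentrated in even degrees, $(E_n)_{r-1}=0$ for $r$ even, so only odd $r\ge 3$ can support a differential on $\iota$. Hence it suffices to prove $H^s(\GG_n,(E_n)_{s-1})=0$ for every odd $s\ge 3$, and I will do this by two complementary vanishing statements whose ranges are arranged to overlap exactly when $n^2\le 2p-2$.

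The first statement, which I expect to be the crux, is a central-character sparseness argument. The subgroup $\mu\subseteq \Z_p^\times$ of $(p-1)$st roots of unity lies in the center of $\GG_n$ (Remark \ref{morava-st}), has order prime to $p$, and by Remark \ref{action-of-center} acts on $(E_n)_{2m}=(E_n)_0\cdot u^{-m}$ through the character $\zeta\mapsto \zeta^{-m}$. Because conjugation by a central element is trivial, this coefficient action induces the identity on $H^\ast(\GG_n,(E_n)_{2m})$; therefore $\zeta^{-m}-1$ annihilates this cohomology, and when $(p-1)\nmid m$ one may pick $\zeta$ with $\zeta^{-m}\ne 1$, so that $\zeta^{-m}-1$ is a unit in $\Z_p$ and $H^\ast(\GG_n,(E_n)_{2m})=0$. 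In particular $H^s(\GG_n,(E_n)_t)=0$ whenever $0<t<2(p-1)$; taking $t=s-1$ this kills the differential for all $s$ with $2\le s\le 2p-2$. The second statement is a cohomological dimension bound: since $p-1\nmid n$, the group $\SS_n$ has no element of order $p$ (such an element would generate $\QQ_p(\zeta_p)$, of degree $p-1$ over $\QQ_p$, inside the division algebra $\End(\Ga_n)\otimes\QQ_p$, forcing $(p-1)\mid n$), so $\SS_n$ is a torsion-free compact $p$-adic analytic group of dimension $n^2$ and $\mathrm{cd}_p\,\SS_n\le n^2$; as $n^2\le 2p-2$ forces $n<p$, the Galois quotient has order prime to $p$ and contributes nothing, giving $\mathrm{cd}_p\,\GG_n\le n^2$ and hence $H^s(\GG_n,-)=0$ for $s>n^2$. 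Finally, the hypothesis $n^2\le 2p-2$ ensures that every odd $s\ge 3$ is caught: if $s\le 2p-2$ it is killed by the central-character vanishing, while if $s>2p-2$ then $s>n^2$ and it is killed by the dimension bound. This completes the reduction and the proof.
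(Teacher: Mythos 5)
Your proof is correct and is essentially the argument the paper defers to via the citation to \cite{Picard} (cf.\ also \cite{HMS}): the central subgroup $\mu$ of $(p-1)$st roots of unity forces the sparseness $H^\ast(\GG_n,(E_n)_t)=0$ for $2(p-1)\nmid t$, the absence of $p$-torsion in $\SS_n$ gives $\mathrm{cd}_p\,\GG_n\leq n^2$, and together these kill every possible differential on the unit class $\iota$, so that it survives to a map $S^0\to X$ which is an $(E_n)_\ast$-isomorphism and hence a $K(n)$-local equivalence. One small correction: $\SS_n$ is not torsion-free (it contains the Teichm\"uller roots of unity); what your hypothesis actually yields, and all that the dimension bound requires, is that $\SS_n$ has no $p$-torsion.
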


\begin{rem}\label{pic-at-1} The case $n=1$ was studied
for all primes in \cite{HMS}. Here $E_1$ can be chosen to be the 
$p$-complete $K$-theory and the group $\GG_1 \cong \ZZ_p^\times$ acts through
Adams operations. This group acts trivially on $(E_1)_0 = K_0= \ZZ_p$. If $p > 2$, then
$$
(\Pic_1)_{\alg}^0 \cong H^1(\ZZ_p^\times,\ZZ_p^\times ) \cong
 \ZZ_p \times \ZZ/(p-1)  \cong \ZZ_p^\times.
$$
and $\Pic_1 \cong (\Pic_1)_{\alg} \cong \ZZ_p \times \ZZ/2(p-1)$. Explicit
topological generators and other elements are given in \cite{HMS}.

If $p=2$, then $\GG_1=\ZZ_2^\times\cong \ZZ_2 \times \ZZ/2$
contains $2$-torsion. As a result, 
$$
(\Pic_1)_{\alg}^0 \cong H^1(\ZZ_2^\times,\ZZ_2^\times ) \cong
\ZZ_2^\times \times \ZZ/2. 
$$
The evaluation map
\begin{equation}\label{pic-ev-2}
\Pic^0_1 \to  (\Pic_1)^0_{\alg} \cong \ZZ_2^\times \times \ZZ/2
\end{equation}
sends $X$ to $(a_3,a_{-1})$ where $\psi^3(x) = a_3 x$ and $\psi^{-1}(x) = a_{-1} x$
for some generator $x \in K_0X$. For example, $S^2$ maps to $(3,-1)$. 
Notice that $(\Pic_1)^0_{\alg}$ has two generators of order $2$, with invariants
$(1,-1)$ and $(-1,1)$ respectively. We have
$$
(\Pic_1)_{\alg} \cong \ZZ_2 \times \ZZ/2 \times \ZZ/2
$$
generated by these elements and $K_\ast S^1$.

The map $\Pic_1 \to  (\Pic_1)_{\alg}$ is
a split surjection, but now $\kappa_1 \cong \ZZ/2$. An
explicit generator for $\kappa_1$ can be given by $L_{K(1)}DQ$
where $DQ$ is the dual of the ``question mark complex''. If we
extend $\eta:S^1 \to S^0$ to a map from the Moore spectrum
$$
\bar{\eta}:\Sigma M_2 \to S^0
$$
then $DQ$ is the cone of $\bar{\eta}$. A classical calculation shows
$K_\ast DQ \cong K_\ast S^0$ as graded modules over the Adams
operations; however, $KO \wedge DQ \simeq \Sigma^4 KO$.
\end{rem}

We now begin to consider the case  $n=2$ and $p > 2$.
The case $p=2$ is considerably harder and not discussed here at
all. The following  algebraic results are due to Hopkins \cite{hopu}
if $p > 3$ and to Karamanov if $p=3$ \cite{kar,kar1}. 

\begin{thm}\label{pnot2}  Let $n=2$ and $p>2$. Then 
\begin{enumerate}

\item $\PicalgO \cong H^1(\GG_n,(E_n)_0^{\times}) \cong
\Z_p^2\times \Z/(p^2-1)$ topologically generated by $(E_2)_0 S^2$
and $(E_2)_0\langle\det\rangle$;

\item $\Picalg \cong \Z_p^2\times \Z/2(p^2-1)$;

\item Both $\varepsilon_n$ and $\varepsilon_n^0$ are surjective,
and if $p>3$ they are both isomorphisms.
\end{enumerate}
\end{thm}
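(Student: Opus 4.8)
The plan is to deduce everything from the identification $\PicalgO\cong H^1(\GG_2,(E_2)_0^\times)$ recorded above, so that part (1) becomes a continuous cohomology computation; parts (2) and (3) are then bootstrapped from it. For part (1) there are two things to do: determine the abstract group, and check that the two distinguished classes $(E_2)_0S^2$ and $(E_2)_0\langle\det\rangle$ of Remark \ref{basic-elements} topologically generate it. Since $n=2$ is prime to $p$ for $p>2$, I would use the reduced norm splitting $\GG_2\cong\GG_2^1\times\ZZ_p$ of (\ref{split-gn1}) to peel off one of the two free $\ZZ_p$-summands.

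The computation itself I would organize around the maximal ideal ${\goth m}=(p,u_1)\subseteq(E_2)_0$, via the short exact sequence of twisted $\GG_2$-modules
$$
1\longra 1+{\goth m}\longra (E_2)_0^\times\longra \FF_{p^2}^\times\longra 1,
$$
and the associated long exact sequence in continuous cohomology. Because $\SS_2$ acts trivially on the residue field and only the Galois factor acts, through Frobenius, $H^\ast(\GG_2,\FF_{p^2}^\times)$ is an elementary computation which supplies the finite cyclic summand $\ZZ/(p^2-1)$. For the pro-$p$ part $1+{\goth m}$ I would pass to a sufficiently deep open subgroup $1+{\goth m}^k$ on which the $p$-adic logarithm converges to a $\GG_2$-equivariant isomorphism $1+{\goth m}^k\cong{\goth m}^k$ onto the additive module, treating the finite quotient $(1+{\goth m})/(1+{\goth m}^k)$ by a direct filtration argument. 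This reduces the pro-$p$ contribution to the additively written groups $H^1(\GG_2,(E_2)_\ast)$ in the relevant internal degrees, i.e. to pieces of the $E_2$-page of the $K(2)$-local Adams--Novikov spectral sequence (\ref{ANSS}). The heart of the matter, and the step I expect to be the main obstacle, is this computation of the additive cohomology of the Morava stabilizer group, which produces exactly the two free $\ZZ_p$-summands; it is precisely here that the cited work enters (Hopkins for $p>3$ and Karamanov \cite{kar,kar1} for $p=3$). Granting the abstract isomorphism $\PicalgO\cong\ZZ_p^2\times\ZZ/(p^2-1)$, I would then identify the generators: $(E_2)_0\langle\det\rangle$ contributes the free summand detected by the reduced norm, while $(E_2)_0S^2$, represented by the crossed homomorphism $\eta$ of Remark \ref{basic-elements}, topologically generates a complementary procyclic summand $\ZZ_p\times\ZZ/(p^2-1)$ (procyclic because $p^2-1$ is prime to $p$), which one checks by evaluating these explicit cocycles on the center and the Galois factor.

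For part (2), the quotient $\Picalg/\PicalgO\cong\ZZ/2$ records the parity of the internal degree and is generated by the class of the odd module $(E_2)_\ast S^1$, whose square is $(E_2)_0S^2$. Since that class has torsion order $p^2-1$ in the procyclic summand, and $p$ is odd so $2$ is a unit on the two $\ZZ_p$-summands, adjoining this square root leaves $\ZZ_p^2$ unchanged and only doubles the finite cyclic part; as $(E_2)_0S^2$ has no reduced-norm component, one gets $\Picalg\cong\ZZ_p^2\times\ZZ/2(p^2-1)$.

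For part (3), all three generators are realized topologically: $(E_2)_0S^2$ by $L_{K(2)}S^2$, the class $(E_2)_\ast S^1$ by $L_{K(2)}S^1$, and $(E_2)_0\langle\det\rangle$ by the spectrum $\Zdet$ of Remark \ref{det-realized}. Since these topologically generate the target and the image of the continuous homomorphism $\varepsilon_2$ is closed, both $\varepsilon_2$ and $\varepsilon_2^0$ are surjective. Their common kernel is $\kappa_2$; for $p>3$ we have $n=2$ not divisible by $p-1$ and $n^2=4\le 2p-2$, so Proposition \ref{vanish-kappa} gives $\kappa_2=0$ and hence injectivity. For $p=3$ the first hypothesis fails and injectivity genuinely breaks down, which is the subject of the remainder of the paper.
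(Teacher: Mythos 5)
Your proposal is correct and follows essentially the same route as the paper: the hard input for part (1) is, in both cases, the cited cohomology computation of Hopkins ($p>3$) and Karamanov ($p=3$), part (2) comes from the non-splitness of the extension $0\to\PicalgO\to\Picalg\to\ZZ/2\to 0$ (which you usefully make explicit via $(E_2)_\ast S^1$ squaring to $(E_2)_0S^2$), and part (3) comes from realizing the topological generators by $L_{K(2)}S^1$, $L_{K(2)}S^2$ and $\Zdet$ together with Proposition \ref{vanish-kappa}. Your sketch of the units filtration $1+{\goth m}\to(E_2)_0^\times\to\FF_{p^2}^\times$ and the logarithm is a reasonable expansion of what the cited computation involves, but it does not change the logical structure of the argument.
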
 

The difficult part is the group cohomology calculation. Part (2) follows
from the observation, also in \cite{HMS}, that the extension
$$
0 \to \PicalgO \to \Picalg \to \ZZ/2 \to 0
$$
cannot be split. Part (3) follows from Proposition \ref{vanish-kappa}
and by noting, as in \cite{kar1}, that the elements of Remark
\ref{basic-elements}  generate the cohomology  group.

\subsection{Resolutions of $\GG_2$-modules}

The group $\GG_n$ is a $p$-adic analytic group in the sense of 
Lazard \cite{Laz} and such groups are of finite mod $p$ cohomological dimension 
unless they contain elements of order $p$.  We saw above in
Remark \ref{pic-at-1} that the element of order $2$ in $\GG_1$ at
$2$ created extra elements in $\Pic_1$. One
reason that this paper is interesting is that if $p=3$, then $\GG_2$ 
contains elements of order $3$.

{\bf From now on we will fix $p=3$ and work at $n=2$.}

An explicit element of order $3$ in $\GG_2$ is given by
$$
a=-\frac{1}{2}(1+\omega S)
$$ 
where $\omega$ is a fixed chosen primitive $8$-th root of unity in 
$\W=W(\FF_p)$. This element defines an inclusion $C_3 \to \GG_2$
of the cyclic group of order $3$ and, by \cite{HennDuke} Theorem 1.9,  the induced map
$$
H^\ast(\GG_2,\FF_3) \longr H^\ast(C_3,\FF_3)
$$
is surjective. Thus $\GG_2$ (at $p=3$) does not have finite
cohomological dimension. The further study of the cohomology
of $\GG_2$ (see Theorem \ref{alg-res} below) uses two subgroups.
Write $\langle - \rangle$
for the subgroup generated by a list of elements.

\begin{defn}\label{key-finite} Define two finite subgroups of 
$\GG_2$ as follows. Let $\phi \in \Gal(\FF_9/\FF_3)$ be the
Frobenius.
\begin{enumerate} 

\item $G_{24}=\langle a,\omega^2,\omega\phi\rangle$.  
Note that $\omega^2$  acts non-trivially on $C_3 = \langle a \rangle$
and $\omega\phi$ acts trivially on $C_3 = \langle a \rangle$; hence,
$G_{24} \cong C_3 \rtimes Q_8$ where $Q_8$ is the quaternion group of
order $8$.

\item  $SD_{16}=\langle\omega, \phi\rangle $. 
This group is isomorphic to the semidihedral group of order $16$. 
\end{enumerate} 
\end{defn}

Recall the splitting
$$
\GG_2 \cong \GG_2^1 \times \ZZ_3
$$
of Equation \ref{split-gn1}. Here $\GG_2^1$ is the kernel of the
reduced norm.  The finite subgroups of $\GG_2$ are automatically finite
subgroups of $\GG_2^1$. 

We now give a resolution of the trivial $\GG_2$-module $\ZZ_3$.
If $X = \lim_{\alpha}\ X_\alpha$ is a profinite set, let 
$$
\ZZ_p[[X]] = \lim_{i,\alpha}\ \ZZ/p^i[X_\alpha].
$$
Let $\chi$ be the character of $SD_{16}$ with
$\chi(\omega) = \chi(\phi) = -1$.

\begin{thm}{\cite{GHMR}}\label{alg-res}  There is an exact complex of 
$\Z_3[[\GG_2^1]]$-modules 
of the following form 
$$
0\to C_3\to C_2\to C_1\to C_0\to \Z_3 
$$ 
with $C_0=C_3\cong \Z_3[[\GG_2^1/G_{24}]]$ and 
$C_1=C_2\cong \Z_3[[\GG_2^1]]\otimes_{\Z_3[SD_{16}]}\Z_3(\chi)$.
\end{thm}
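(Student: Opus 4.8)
The plan is to build the four modules and their differentials explicitly from the finite subgroup structure of $\GG_2^1$, to observe that the outer differentials and the vanishing of successive composites are essentially forced, and then to reduce exactness to an independent computation of $H^*(\GG_2^1,\FF_3)$. I would begin by recording the relevant subgroup lattice. The groups of Definition \ref{key-finite} share the quaternion group $Q_8=\langle\omega^2,\omega\phi\rangle$, which sits in $G_{24}\cong \langle a\rangle\rtimes Q_8$ and as an index-$2$ subgroup of $SD_{16}$. A direct check shows that $\chi$ restricts trivially to $Q_8$ (indeed $\chi(\omega^2)=\chi(\omega\phi)=1$), so the twist in $C_1=C_2$ lives entirely on the extension $SD_{16}/Q_8$ and not on the common $Q_8$. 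This is what allows the permutation-type modules on the two maximal finite subgroups to be spliced, and it fixes the modules $C_i$ up to the data of the differentials; the genuine content is the construction of the maps and the proof of exactness.

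Next I would construct the two lower differentials and use duality for the upper ones. The augmentation $\varepsilon:C_0=\ZZ_3[[\GG_2^1/G_{24}]]\to\ZZ_3$, sending every coset to $1$, is $\GG_2^1$-equivariant and surjective. By Frobenius reciprocity a $\GG_2^1$-map $\partial_1:C_1\to C_0$ is the same datum as an element $x\in C_0$ with $g\cdot x=\chi(g)x$ for all $g\in SD_{16}$, i.e.\ a $\chi$-eigenvector, and one produces such an $x$ explicitly inside the augmentation ideal. The composite $\varepsilon\partial_1$ then vanishes automatically: since $\varepsilon(x)=\varepsilon(g\cdot x)=\chi(g)\varepsilon(x)$ while $\chi$ is nontrivial and $2$ is a unit in $\ZZ_3$, necessarily $\varepsilon(x)=0$. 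For the top of the complex I would invoke Poincaré duality: by Lazard \cite{Laz}, $\GG_2^1$ is a compact $3$-adic analytic group of dimension $3$, hence virtually a $\mathrm{PD}_3$-group, and its maximal finite subgroups are self-dual in the appropriate twisted sense. Applying $\Hom_{\ZZ_3[[\GG_2^1]]}(-,\ZZ_3[[\GG_2^1]])$ to the truncation $C_1\to C_0$ should reproduce $C_2\to C_3$ with $C_3\cong C_0$ and $C_2\cong C_1$, the orientation data being compatible with the single twist $\chi$; this is the source of the self-duality asserted in the statement.

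Finally I would prove exactness, which is the main obstacle. There is no formal reason that the hand-built differentials have no homology beyond $\ZZ_3$ in degree $0$, so one must import an independent computation. Concretely I would reduce modulo $3$, compute the homology of $\FF_3\otimes_{\ZZ_3}C_\bullet$ using the known mod-$3$ cohomology of the finite subgroups, and match it term by term against $H_*(\GG_2^1,\FF_3)$ as computed by Henn's centralizer method (compare \cite{HennDuke} and the surjectivity $H^*(\GG_2,\FF_3)\to H^*(\langle a\rangle,\FF_3)$ recorded above). An Euler-characteristic count pins down the ranks and forces the alternating sum to agree; surjectivity of $\varepsilon$ and, dually, injectivity of $\partial_3$ dispose of the two ends; and the middle exactness follows once the total homology of the complex is shown to be concentrated in degree $0$. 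The delicate point throughout is choosing the eigenvector $x$ defining $\partial_1$ so that its kernel and cokernel come out exactly right, rather than merely surjecting onto $\Ker\varepsilon$; it is precisely here that the nontrivial twist $\chi$ and the enforced self-duality are indispensable rather than incidental.
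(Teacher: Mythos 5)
The paper itself contains no proof of Theorem \ref{alg-res}: it is quoted from \cite{GHMR}, so your proposal has to be measured against the argument given there. In outline you do track that argument in several genuine respects: the augmentation on $C_0$; the identification, via Frobenius reciprocity, of a map $\partial_1:C_1\to C_0$ with a $\chi$-eigenvector $x$ for $SD_{16}$ inside $\ZZ_3[[\GG_2^1/G_{24}]]$; the observation that $\varepsilon\partial_1=0$ is automatic because $\chi$ is nontrivial and $2$ is invertible; and the use of the fact that $\GG_2^1$ is a compact $3$-adic analytic group of dimension $3$, hence a virtual Poincar\'e duality group, to obtain the top half of the complex as the $\ZZ_3[[\GG_2^1]]$-linear dual of the bottom half. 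Those are all real ingredients of the proof in \cite{GHMR}.

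There are, however, two genuine gaps. First, your exactness criterion is wrong as stated. The modules $C_i$ are projective only \emph{relative to finite subgroups}, not projective, so the homology of the complex obtained by applying the (completed) functor $\FF_3\otimes_{\ZZ_3[[\GG_2^1]]}(-)$ is not $H_*(\GG_2^1,\FF_3)$; the failure of exactly that identification is why Corollary \ref{SS} is a spectral sequence rather than an isomorphism, and matching the complex ``term by term against $H_*(\GG_2^1,\FF_3)$'' presupposes the resolution you are trying to build. The device that makes the reduction work in \cite{GHMR} is restriction to an open, torsion-free, pro-$3$ subgroup $K\subseteq\GG_2^1$: since $K$ meets every finite subgroup trivially, each $C_i$ becomes a finitely generated free $\ZZ_3[[K]]$-module, $\FF_3[[K]]$ is a complete local ring, and Nakayama reduces each surjectivity statement to finite-dimensional linear algebra over $\FF_3$, the numerical input being $H_*(K,\FF_3)\cong\Lambda(x_1,x_2,x_3)$ from Lazard \cite{Laz} rather than the far more complicated cohomology of $\GG_2^1$ from \cite{HennDuke}. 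Your Euler-characteristic count is only a necessary condition and cannot ``force'' exactness. Second, the two steps you defer are the entire content of the theorem: exhibiting $x$ and proving that $\partial_1$ maps \emph{onto} $\Ker\varepsilon$ (a genuine computation, carried out mod $3$ over $K$ as above), and, above all, the splicing. Duality hands you an exact sequence $0\to\ZZ_3\to C_3\to C_2$ (after checking that the orientation character is trivial on the relevant finite subgroups), but you must still construct $\partial_2:C_2\to C_1$ and identify the image of $\partial_2$ with the kernel of $\partial_1$; this middle exactness is the hardest single step in \cite{GHMR}, and your proposal does not engage with it.
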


\begin{rem}\label{2from1} We can use this resolution to obtain a resolution
of the trivial $\GG_2$-module $\ZZ_3$. If $C_s$ is the $\GG_2^1$-module of
Theorem \ref{alg-res},  define $D_s$ to be the $\GG_2$-module obtained from $C_s$ by
inducing up from $\GG_2^1$. Thus, for example,
$$
D_0 \cong \ZZ_3[[\GG_2/G_{24}]].
$$
Using the isomorphism  $\GG_2^1 \times \ZZ_3 \cong \GG_2$ of (\ref{split-gn1}),
$D_s$ can be obtained by taking the completed tensor 
product of $C_s$ with $\ZZ_3[[\ZZ_3]]$. There is a larger resolutions by $\GG_2$-modules
\begin{equation}\label{alg-res-2}
0\to D_3\to D_3 \oplus D_2\to D_2 \oplus D_1\to D_1 \oplus D_0\to 
D_0 \to  \Z_3 
\end{equation}
To see this, let  $\psi^{p+1}=\psi^4$ be the chosen topological generator for
$\ZZ_3$. Then we get a very short resolution
\begin{equation}\label{alg-short-res}
\xymatrix@C=35pt{
0 \rto &\ZZ_3[[\ZZ_3]] \rto^-{\psi^{4} - 1} & \ZZ_3[[\ZZ_3]] \rto & \ZZ_3 \longr 0.
}
\end{equation}
If we write $P_\bullet$ for the complex
$$
P_\bullet \defeq
\{ \xymatrix@C=35pt{ \ZZ_3[[\ZZ_3]] \rto^-{\psi^{4} - 1} & \ZZ_3[[\ZZ_3]] }\}
$$
then the resolution $D_\bullet$ is the completion of the tensor product
$C_\bullet \otimes P_\bullet$,
where $C_\bullet$ is the resolution of Theorem \ref{alg-res}.
\end{rem}

We recall that a continuous $\ZZ_3[[\GG_2]]$-module $M$ is 
{\it profinite} if there is an isomorphism 
$M\cong\lim_{\a} M_\alpha$ 
where each $M_\alpha$ is a finite $\ZZ_3[[\GG_2]]$ module.

\begin{cor}\label{SS} Let $M$ be a profinite $\Z_3[[\GG_2^1]]$-module. 
Then there is a first quadrant cohomology spectral sequence 
$$
E_1^{s,t}(M)\cong \Ext_{\Z_3[[\GG_2^1]]}^s(C_t,M)\Longrightarrow 
H^{s+t}(\GG_2^1,M)
$$ 
with 
$$
E_1^{s,0}(M)=E_1^{s,3}(M)\cong H^s(G_{24},M)
$$ 
and 
$$
E_1^{s,1}(M)=E_1^{s,2}(M)\cong 
\begin{cases} 
\Hom_{\Z_3[SD_{16}]}(\Z_3(\chi),M) &  s=0  \\ 
0 & s>0 \ . 
\end{cases} 
$$ 
\end{cor}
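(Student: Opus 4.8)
The plan is to realize the spectral sequence as the hyper-$\Ext$ spectral sequence of the resolution in Theorem~\ref{alg-res}, and then to evaluate its $E_1$-page by Shapiro's lemma. View the exact complex
$$
0\to C_3\to C_2\to C_1\to C_0\to \Z_3 \to 0
$$
as a quasi-isomorphism $C_\bullet \simeq \Z_3$ of complexes of profinite $\Z_3[[\GG_2^1]]$-modules, with $C_t$ placed in homological degree $t$ for $0\le t\le 3$. First I would choose, for each $t$, a projective resolution $P_{t,\bullet}\to C_t$ and form the double complex whose $(s,t)$-entry is $\Hom_{\Z_3[[\GG_2^1]]}(P_{t,s},M)$. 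Taking cohomology first in the projective-resolution direction produces a first-quadrant spectral sequence with
$$
E_1^{s,t}\cong \Ext^s_{\Z_3[[\GG_2^1]]}(C_t,M),
$$
whose $d_1$ is induced by the differentials of $C_\bullet$. Because $C_\bullet$ is a resolution of $\Z_3$, the total complex computes $\Ext^{*}_{\Z_3[[\GG_2^1]]}(\Z_3,M)=H^{*}(\GG_2^1,M)$, which is the abutment claimed.

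It then remains to identify the columns of the $E_1$-page. Each $C_t$ is induced from a finite subgroup: $C_0=C_3\cong \Z_3[[\GG_2^1]]\otimes_{\Z_3[G_{24}]}\Z_3$ and $C_1=C_2\cong \Z_3[[\GG_2^1]]\otimes_{\Z_3[SD_{16}]}\Z_3(\chi)$. Since $G_{24}$ and $SD_{16}$ are finite, $\Z_3[[\GG_2^1]]$ is free (in the completed sense) over $\Z_3[G_{24}]$ and over $\Z_3[SD_{16}]$, so induction is exact and sends projectives to projectives; applying it to a projective resolution over the subgroup ring therefore yields one over $\Z_3[[\GG_2^1]]$. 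The induction--restriction adjunction (Shapiro's lemma) then gives
$$
\Ext^s_{\Z_3[[\GG_2^1]]}(C_0,M)\cong \Ext^s_{\Z_3[G_{24}]}(\Z_3,M)=H^s(G_{24},M),
$$
and identically for $C_3$, which yields the columns $E_1^{s,0}$ and $E_1^{s,3}$. For $C_1$ and $C_2$ the same argument produces $\Ext^s_{\Z_3[SD_{16}]}(\Z_3(\chi),M)$.

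Finally I would use that $|SD_{16}|=16$ is a unit in $\Z_3$, so the group ring $\Z_3[SD_{16}]$ has vanishing higher cohomology: $\Ext^s_{\Z_3[SD_{16}]}(\Z_3(\chi),M)=0$ for $s>0$, while in degree $s=0$ it is $\Hom_{\Z_3[SD_{16}]}(\Z_3(\chi),M)$. This gives exactly the two-row pattern asserted for $E_1^{s,1}$ and $E_1^{s,2}$. The one genuinely delicate point is the homological bookkeeping in the profinite setting: one must check that projective resolutions, the hyper-$\Ext$ double complex, and Shapiro's lemma all behave correctly in the category of profinite $\Z_3[[\GG_2^1]]$-modules with continuous cohomology. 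Since every $C_t$ is induced from a \emph{finite} subgroup, so that the relevant $\Ext$ groups reduce to ordinary finite-group cohomology (where $3$ either divides the order, as for $G_{24}$, or is invertible, as for $SD_{16}$), I expect this to be the main technical obstacle rather than a conceptual one.
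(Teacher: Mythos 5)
Your proposal is correct and is essentially the argument the paper intends: the corollary is stated without proof precisely because it is the standard hyper-$\Ext$ spectral sequence of the resolution in Theorem~\ref{alg-res}, with the $E_1$-columns identified by Shapiro's lemma for the induced modules and the vanishing $E_1^{s,1}=E_1^{s,2}=0$ for $s>0$ coming from $16$ being a unit in $\Z_3$. The only point to tidy is that independently chosen projective resolutions $P_{t,\bullet}\to C_t$ do not literally assemble into a double complex (composites of lifted differentials are only null-homotopic), so one should invoke a Cartan--Eilenberg resolution in the category of profinite $\Z_3[[\GG_2^1]]$-modules; this is routine and does not affect the argument.
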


\begin{rem}\label{ext-to-big} If $M$ is a profinite $\ZZ_3[[\GG_2]]$-module,
there is a similar spectral
sequence for computing $H^\ast(\GG_2,M)$ using the second
resolution (\ref{alg-res-2}), although the $E_1$ term
is slightly more complicated. We do record that
for $s > 0$, $E_1^{s,t} =0$ if $t=2$, and if $t=0,1,3$ or $4$,
then
$$
E_1^{s,t}(M) = H^s(G_{24},M).
$$
Furthermore, using Remark \ref{2from1}, we have
$$
d_1=\psi^{4}-1:E_1^{s,t}(M) \longr E_1^{s,t+1}(M)
$$
for $s > 0$ and $t=0,3$. 
\end{rem}

\begin{rem}\label{GHMR-input} We have considerable input for these spectral sequences.
For example,
for the spectral sequence of Corollary \ref{SS}
the terms $E_1^{\ast,1}$ and $E_1^{\ast,2}$ contribute
only to $H^1(\GG_2^1,M)$ and $H^2(\GG^1_2,M)$. These terms also can
be rewritten if $M = (E_2)_nX$ for some spectrum $X$. The representation
$\ZZ_3(\chi)$ is self-dual and $\ZZ_3(\chi) \otimes (E_2)_0 \cong (E_2)_0S^8
\cong (E_2)_{-8}$; hence,
$$
\Hom_{\Z_3[SD_{16}]}(\Z_3(\chi),(E_2)_\ast X) \cong H^0(SD_{16},(E_2)_\ast \Sigma^8X).
$$
Since $SD_{16}$ is a $2$-group, there are no higher cohomology groups.
In particular, if $X=S^0$, we have
$$
H^0(SD_{16},(E_2)_\ast) \cong \pi_\ast \Sigma^8E_2^{hSD_{16}}
$$
and we know from  \cite{GHMR} \S 3 that there is an isomorphism
$$
\ZZ_3[[y]][v_1,v_2^{\pm 1}]/(v_2y=v_1^4) \cong \pi_\ast \Sigma^8E_2^{hSD_{16}}.
$$
\end{rem}

For the terms where the group $G_{24}$ appears, we will use Theorem \ref{coh-G24}
below. There are invariant elements $c_4$, $c_6$ and $\Delta$ in
$H^0(G_{24},(E_2)_\ast)$ of internal degrees $8$, $12$ and $24$
respectively. The element $\Delta$ is invertible and there is a relation
\footnote{In \cite{GHMR} we wrote this relation as $c_4^3 - c_6^2
= 3^3\Delta$. However, we can replace $c_4$ and $c_6$ by 
$c_4/2^2$ and $c_6/2^3$ respectively to get the indicated relation,
which has the aesthetic value of coinciding with the standard relation
among modular forms. The connection can be made using the 
formal group of a supersingular elliptic curve. See \cite{GS}.}
$$
c_4^3 - c_6^2 = (12)^3\Delta\ .
$$
Define $j=c_4^3/\Delta$ and let
$M_\ast$ be the graded ring
$$
M_\ast = \ZZ_3[[j]][c_4,c_6,\Delta^{\pm 1}]/(c_4^3 - c_6^2 = (12)^3\Delta,
\Delta j = c_4^3).
$$
Furthermore, there are also elements $\alpha \in H^1(G_{24},(E_2)_4)$ and
$\beta \in H^2(\G_{24},(E_2)_{12})$ and there are relations
\begin{eqnarray}\label{mod-rels}
3\alpha = 3\beta = \alpha^2 &=0 \nonumber\\
c_4 \alpha = c_4\beta &=0\\
c_6 \alpha = c_6 \beta &=0 \nonumber
\end{eqnarray}

The fixed point spectral sequence is also computed in \S 3 in
\cite{GHMR}; the results will be useful in section 4 and 5. 

\begin{thm}\label{coh-G24} Let $R \subseteq M_\ast[\alpha,\beta]$
be the ideal generated by the relations of  (\ref{mod-rels}).
Then the induced map
$$
M_\ast[\alpha,\beta]/R \longr H^\ast(G_{24},(E_2)_\ast)
$$
is an isomorphism.
\end{thm}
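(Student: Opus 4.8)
The plan is to exploit the extension $1 \to C_3 \to G_{24} \to Q_8 \to 1$ of Definition \ref{key-finite}, in which $C_3 = \langle a \rangle$ is normal with quotient $Q_8$ of order $8$, a unit in $\ZZ_3$. Feeding this into the Lyndon--Hochschild--Serre spectral sequence
$$
H^p(Q_8, H^q(C_3, (E_2)_\ast)) \Longrightarrow H^{p+q}(G_{24}, (E_2)_\ast)
$$
and using that $H^p(Q_8,-)$ vanishes for $p>0$ on $\ZZ_3$-modules, the spectral sequence collapses onto its edge and yields a natural isomorphism
$$
H^\ast(G_{24}, (E_2)_\ast) \cong H^\ast(C_3, (E_2)_\ast)^{Q_8}.
$$
This reduces the theorem to the cohomology of the cyclic group $C_3$ together with its residual $Q_8$-action.

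Next I would compute $H^\ast(C_3,(E_2)_\ast)$ directly. Since $C_3$ is cyclic of order $3$ its cohomology is $2$-periodic above degree $0$ and is controlled by the norm $N = 1 + a + a^2$ and the difference $a-1$: one has $H^0 = ((E_2)_\ast)^{C_3}$, $H^{2i} = ((E_2)_\ast)^{C_3}/N(E_2)_\ast$ for $i>0$, and $H^{2i+1} = \ker N/(a-1)(E_2)_\ast$ for $i\geq 0$. The essential input is the explicit action of $a=-\frac{1}{2}(1+\omega S)$ on $(E_2)_\ast = \W[[u_1]][u^{\pm 1}]$, which I would extract from Lubin--Tate deformation theory, using the interpretation of $(\FF_9,\Ga_2)$ as the formal group of a supersingular elliptic curve at $p=3$. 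Computing the invariant subring $((E_2)_\ast)^{C_3}$ and the kernels and cokernels of $N$ and $a-1$ on generators then produces the periodicity class $\beta \in H^2$ and the odd class $\alpha \in H^1$, in internal degrees $12$ and $4$, together with the invariant elements that will become $c_4$, $c_6$, $\Delta$.

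Finally I would pass to $Q_8$-invariants. The ring $((E_2)_\ast)^{G_{24}} = (((E_2)_\ast)^{C_3})^{Q_8}$ is identified with the ring of modular forms $M_\ast$, with $\Delta$ a unit and $c_4^3 - c_6^2 = (12)^3\Delta$ the discriminant relation; the auxiliary variable $j = c_4^3/\Delta$ accounts for the relation $\Delta j = c_4^3$. The classes $\alpha$ and $\beta$ are already $Q_8$-fixed and survive. The relations (\ref{mod-rels}) would then be verified from general principles once the action is pinned down: $3\alpha = 3\beta = 0$ because $|C_3|$ annihilates positive-degree (Tate) cohomology; $\alpha^2 = 0$ by graded-commutativity, since an odd class satisfies $2\alpha^2 = 0$ and $2$ is a unit in $\ZZ_3$; and $c_4\alpha = c_6\alpha = c_4\beta = c_6\beta = 0$ by showing that $c_4$ and $c_6$ lie in the image of the norm $N$, hence vanish in $\hat H^0 = ((E_2)_\ast)^{C_3}/N(E_2)_\ast$ and so annihilate the positive-degree cohomology as elements of the coefficient ring, while the unit $\Delta$ acts invertibly and carries the $\Delta$-periodicity.

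The main obstacle is the concrete determination of the $C_3$-action on $(E_2)_\ast$ and the resulting kernel and cokernel computations, in particular verifying that $c_4$ and $c_6$ lie in the image of the norm while $\Delta$ remains a unit. This is where the genuine content lies; once $N$ and $a-1$ are understood on generators, the remaining identification of $M_\ast[\alpha,\beta]/R$ with $H^\ast(G_{24},(E_2)_\ast)$ reduces to matching bidegrees and checking multiplicativity.
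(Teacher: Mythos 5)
The paper itself does not prove this theorem: it is imported wholesale from \S 3 of \cite{GHMR}, where the computation is carried out by exactly the strategy you describe --- collapse of the Lyndon--Hochschild--Serre spectral sequence for $1\to C_3\to G_{24}\to Q_8\to 1$ onto $H^\ast(C_3,(E_2)_\ast)^{Q_8}$ since $|Q_8|$ is invertible, an explicit determination of the action of $a$ on $(E_2)_0\cong\W[[u_1]]$ via the formal group of a supersingular elliptic curve, the periodic $N$/$(a-1)$ resolution for $C_3$, and a norm argument for the relations $c_4\alpha=c_6\alpha=c_4\beta=c_6\beta=0$. So your outline is the right one and all the reductions you give (the collapse, the $2$-periodicity, $3\alpha=3\beta=0$ from $|C_3|$, $\alpha^2=0$ from graded commutativity with $2$ invertible, annihilation by norms) are sound. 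The one thing to be clear about is that what you have deferred --- pinning down the $C_3$-action on $\W[[u_1]][u^{\pm1}]$ in workable coordinates and computing $\ker N/(a-1)$, $(\,\cdot\,)^{C_3}/N$, and the $Q_8$-action on these --- is not a routine afterthought but is the entire content of \cite{GHMR} \S 3; in particular the claims that $c_4$ and $c_6$ are norms while $\Delta$ is not, and that the $Q_8$-invariants of $H^{>0}(C_3,(E_2)_\ast)$ are exactly $\FF_3[\Delta^{\pm1},\beta]\otimes\Lambda(\alpha)$ and no more, require that explicit computation and cannot be obtained from general principles. As a standalone proof your write-up is therefore incomplete at precisely the step you identify, but as a reconstruction of the method of the cited source it is accurate.
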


\begin{thm}\label{hom-G24} 1.) In the fixed-point spectral
sequence
$$
H^s(G_{24},\Et_t) \Longrightarrow \pi_{t-s}E_2^{hG_{24}}
$$
all differentials are determined by
$$
d_5(\Delta) = \pm \alpha\beta^2
$$
and
$$
d_9(\alpha\Delta^2) = \pm \beta^5.
$$

2.)The class $\Delta^3$ is a permanent cycle and extends to an
equivalence
$$
\Sigma^{72}E_2^{hG_{24}} \simeq E_2^{hG_{24}}.
$$

3.) The kernel of the Hurewicz map 
$$
\pi_n E_2^{hG_{24}} \to \Et_n E_2^{hG_{24}}, \quad 0 \leq n \leq 72
$$
is a $\ZZ/3$ module generated by the classes
$$
\beta^i,\ 1 \leq i \leq 4;\qquad \alpha\beta^i, i = 0,1
$$
and classes $x$ and $\beta x$ where $x \in \pi_{27}E_2^{hG_{24}}$
is the Toda bracket $\langle\alpha,\alpha,\beta^2\rangle$
detected by $\pm \alpha\Delta$. Furthermore 
$\alpha x = \pm \beta^3.$
\end{thm}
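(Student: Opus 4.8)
\emph{Strategy.} The plan is to run the fixed‑point spectral sequence of the theorem starting from the $E_2$‑term $M_\ast[\alpha,\beta]/R\cong H^\ast(G_{24},\Et_\ast)$ of Theorem \ref{coh-G24}, using three inputs: multiplicativity (Leibniz), the extreme sparseness of the $E_2$‑page, and a horizontal vanishing line at $E_\infty$. The vanishing line comes from the $K(2)$‑local Tate vanishing theorem of Greenlees and Sadofsky: since $G_{24}$ is finite and $E_2$ is $K(2)$‑local, the Tate spectrum satisfies $E_2^{tG_{24}}\simeq\ast$, so the homotopy fixed‑point spectral sequence agrees with the (vanishing) Tate spectral sequence in high filtration and hence has a horizontal vanishing line at $E_\infty$. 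This is the structural fact that forces the infinite $\alpha$‑ and $\beta$‑towers on the $E_2$‑page to be truncated. First I would record the permanent cycles: $c_4$ and $c_6$ sit in filtration $0$ and, by a degree count on $M_\ast[\alpha,\beta]/R$, have no possible nonzero targets; $\alpha$ and $\beta$ are permanent cycles detecting the images of $\alpha_1$ and $\beta_1$ under the unit map from $L_{K(2)}S^0$. The relations $c_4\alpha=c_6\alpha=c_4\beta=c_6\beta=0$ of (\ref{mod-rels}) mean the only classes that can interact with the towers are the products of $\alpha,\beta$ with powers of $\Delta$.

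\emph{Part (1).} By sparseness, $\Delta\in E_2^{0,24}$ supports no differential before $d_5$, and a direct count gives $E_5^{5,28}=\FF_3\{\alpha\beta^2\}$, one‑dimensional. If $d_5(\Delta)$ were zero then $\Delta$, and with it every $\beta^i$, would be a permanent cycle, contradicting the horizontal vanishing line; hence $d_5(\Delta)=\pm\alpha\beta^2$. The Leibniz rule together with $3\alpha=0$ and $\alpha^2=0$ then computes $d_5$ everywhere: $d_5(\Delta^2)=\pm 2\alpha\beta^2\Delta\neq0$, while $d_5(\Delta^3)=0$ and $d_5(\alpha\Delta^2)=0$. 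Passing to $E_6=\dots=E_9$ by sparseness, I would repeat the argument for $d_9$: the survivor $\alpha\Delta^2\in E_9^{1,52}$ has unique nonzero target $\beta^5\in E_9^{10,60}$, and it must be nonzero to kill the remaining $\beta$‑tower, so $d_9(\alpha\Delta^2)=\pm\beta^5$. After $d_9$ there is no room for further differentials, so $E_{10}=E_\infty$.

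\emph{Parts (2) and (3).} For (2), $d_5(\Delta^3)=3\Delta^2d_5(\Delta)=0$ since $3\alpha=0$, and a degree count gives $E_9^{9,80}=0$, so $\Delta^3$ is a permanent cycle; since $\Delta^{\pm3}$ both survive, the detecting homotopy class is a unit in $\pi_\ast E_2^{hG_{24}}$ and multiplication by it is the equivalence $\Sigma^{72}E_2^{hG_{24}}\simeq E_2^{hG_{24}}$. For (3), the kernel of the Hurewicz map is precisely the positive‑filtration part of $\pi_\ast E_2^{hG_{24}}$, so I would read off the positive‑filtration survivors in $0\le n\le 72$: the truncated towers $\beta^i$ ($1\le i\le4$) and $\alpha\beta^i$ ($i=0,1$), together with $\alpha\Delta$ in degree $27$ and $\alpha\beta\Delta$ in degree $37$ (note $\alpha\beta^2\Delta$ is hit by $d_5(\Delta^2)$, so no $\beta^2x$ appears). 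To identify the class $x$ detected by $\pm\alpha\Delta$ as the Toda bracket $\langle\alpha,\alpha,\beta^2\rangle$ I would invoke Moss's convergence theorem, using $\alpha^2=0$ and the differential $d_5(\Delta)=\pm\alpha\beta^2$ as the nullhomotopy data that exhibits $\Delta$ as the defining system. The relation $\alpha x=\pm\beta^3$ is then a hidden extension (hidden because $\alpha^2=0$ on $E_\infty$) obtained from the shuffle $\alpha\langle\alpha,\alpha,\beta^2\rangle=\langle\alpha,\alpha,\alpha\rangle\beta^2$ and the classical relation $\langle\alpha,\alpha,\alpha\rangle=\pm\beta$ at $p=3$, giving $\alpha x=\pm\beta\cdot\beta^2=\pm\beta^3$.

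\emph{Main obstacle.} The crux is establishing the two families of differentials rigorously; everything else is bookkeeping. The delicate point is that the vanishing‑line argument only guarantees that the towers \emph{eventually} die, so it must be combined with the sparseness of the $E_2$‑page to certify that $d_5(\Delta)$ and $d_9(\alpha\Delta^2)$ are the precise differentials responsible, and with the uniqueness of the targets $\alpha\beta^2$ and $\beta^5$ to pin them down up to sign. The second subtle point is the hidden extension $\alpha x=\pm\beta^3$ of part (3): it is invisible on the spectral sequence and requires the Toda‑bracket identification of $x$ together with the juggling formula, rather than any differential.
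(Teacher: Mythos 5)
The paper does not actually prove Theorem \ref{hom-G24}: it is imported verbatim from \S 3 of \cite{GHMR} (going back to the Hopkins--Miller computation and \cite{GS}), where the differential $d_5(\Delta)=\pm\alpha\beta^2$ is established using input from the stable stems --- namely Toda's relation $\alpha_1\beta_1^3=0$ in $\pi_{33}S^0$: the class $\alpha\beta^3\in E_2^{7,40}$ is a product of permanent cycles detecting the image of $\alpha_1\beta_1^3=0$ under the unit, so it must be a boundary, and a degree count shows the only possible source is $d_5(\beta\Delta)=\beta\,d_5(\Delta)$. Your route via the Greenlees--Sadofsky Tate vanishing is a recognized alternative in spirit, and your bookkeeping for the targets, for parts (2) and (3), for the identification of $x$ with $\langle\alpha,\alpha,\beta^2\rangle$ via Moss's theorem, and for the shuffle $\alpha\langle\alpha,\alpha,\beta^2\rangle=\langle\alpha,\alpha,\alpha\rangle\beta^2=\pm\beta^3$ is correct.

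However, your derivation of the key differential has a genuine gap. The sentence ``if $d_5(\Delta)$ were zero then $\Delta$, and with it every $\beta^i$, would be a permanent cycle, contradicting the horizontal vanishing line'' is not a valid inference: the $\beta^i$ are permanent cycles in any case (being powers of a permanent cycle), and a permanent cycle does not contradict a vanishing line --- it must fail to be a \emph{boundary} to do that. Moreover the death of the $\beta$-tower does not hinge on $d_5(\Delta)$ at all; per the theorem it is the $d_9$-family off $\alpha\beta^{i-5}\Delta^2$ that kills $\beta^i$, and that family is available whether or not $d_5(\Delta)$ vanishes. The tower whose survival actually forces $d_5(\Delta)\neq 0$ is $\alpha\beta^i$ for $i$ odd, whose only possible sources are $\beta^{i-2-12m}\Delta^{1+5m}$ via $d_{5+24m}$; but a vanishing line of \emph{unspecified} height does not rule out the alternative pattern in which $d_5(\Delta)=0$ and these classes are killed much later (e.g.\ by $d_{53}(\Delta^{11})=\pm 2\alpha\beta^{26}$ and its $\beta$-multiples --- note that the Leibniz rule does not kill this option, since $11\not\equiv 0 \bmod 3$), leaving finitely many $\alpha\beta^i$ alive in filtrations up to $51$. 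To close the argument you must either (a) import the Toda relation $\alpha_1\beta_1^3=0$ as above, or (b) produce a \emph{quantitative} bound on the height of the $E_\infty$ vanishing line from the Tate argument, which you do not do. A similar (milder) issue affects $d_9$: the uniqueness of $\alpha\Delta^2$ as a source for $\beta^5$ is correct, but you must first certify that some $\beta^N$ genuinely dies, which again needs a vanishing line of known height or the global inductive bookkeeping you elide.
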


\section{Exotic $\Pic$ at $p=3$}  

The main theorem of the paper is that $\kappa_2$ is isomorphic
to $\ZZ/3 \times \ZZ/3$.
We will state a refined version of this result below in
Theorem \ref{tau-iso} once we have assembled
the necessary preliminaries. This refined version will be
proved in the section 5.

Recall that we have an Adams-Novikov Spectral
Sequence
$$
E_2^{s,t}=H^s(\GG_2,(E_2)_t)\Rightarrow \pi_{t-s}(L_{K(2)}S^0).
$$

A key algebraic result is the following.

\begin{prop}\label{ANSS-dets} If $t$ is not divisible by $4 = 2(p-1)$, then
$$
H^s(\GG_2,(E_2)_t) = 0.
$$
Furthermore, there is
a splittable short exact sequence
\begin{equation}\label{ss1}
0 \to H^1(G_{24},\Et_4) \to H^5(\GG_2,\Et_4) \to H^5(G_{24},\Et_4)
\to 0\ .
\end{equation}
A choice of splitting yields an isomorphism
$$
H^5(\GG_2,(E_2)_4) \cong \Z/3\times\Z/3.
$$
\end{prop}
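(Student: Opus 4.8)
The plan is to run the algebraic spectral sequence of Remark \ref{ext-to-big}, which assembles $H^\ast(\GG_2, M)$ out of the groups $H^s(G_{24}, M)$ together with the $SD_{16}$-contributions, applied to the module $M = \Et_t$. The single most useful input is the action of the central generator $\psi^4 = \psi^{p+1}$: by Remark \ref{action-of-center} it acts on $\Et_t$ by multiplication by $4^{-t/2} = 2^{-t}$, and since $2 \equiv -1 \pmod 3$ this reduces to $(-1)^t$ modulo $3$. In particular $\psi^4$ acts trivially modulo $3$ on every even-degree module, so on all the $3$-torsion groups $H^s(G_{24}, \Et_t)$ with $s > 0$ the differential $d_1 = \psi^4 - 1$ of Remark \ref{ext-to-big} vanishes identically. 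This will be the engine of the whole computation.

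For the first assertion I would argue by a degree count. By Theorem \ref{coh-G24} the ring $H^\ast(G_{24}, \Et_\ast)$ is generated by $c_4, c_6, \Delta^{\pm 1}, j, \alpha, \beta$, all of whose internal degrees ($8, 12, 24, 0, 4, 12$) are divisible by $4$; hence $H^s(G_{24}, \Et_t) = 0$ whenever $4 \nmid t$. The same holds for the $SD_{16}$-term, since by Remark \ref{GHMR-input} it is a degree shift of $H^0(SD_{16}, \Et_\ast) \cong \ZZ_3[[y]][v_1, v_2^{\pm 1}]/(v_2 y = v_1^4)$, whose generators again sit in degrees divisible by $4$. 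Every $E_1$-term of the spectral sequence of Remark \ref{ext-to-big} has one of these two shapes, so for $4 \nmid t$ the whole $E_1$-page vanishes and $H^s(\GG_2, \Et_t) = 0$.

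For the short exact sequence I would specialize to $t = 4$ and read off the groups $H^s(G_{24}, \Et_4)$ from Theorem \ref{coh-G24}: matching internal degree $4$ against cohomological degree, the only nonzero contributions are $H^1(G_{24}, \Et_4) = \ZZ/3\langle\alpha\rangle$ and $H^5(G_{24}, \Et_4) = \ZZ/3\langle\Delta^{-1}\alpha\beta^2\rangle$, while $H^2, H^3, H^4$ all vanish. Because $d_1 = \psi^4 - 1 = 0$ we have $E_2 = E_1$ in these spots, and the only total-degree-$5$ entries that survive are $E_\infty^{1,4} = H^1(G_{24}, \Et_4)$ and $E_\infty^{5,0} = H^5(G_{24}, \Et_4)$: the remaining entries $E^{4,1}$ and $E^{2,3}$ vanish since $H^4(G_{24}, \Et_4) = H^2(G_{24}, \Et_4) = 0$, and every potentially nonzero differential into or out of the two surviving terms has target or source a group $H^3(G_{24}, \Et_4)$ or $H^2(G_{24}, \Et_4)$, which we have just shown to be zero. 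The resulting two-step filtration of $H^5(\GG_2, \Et_4)$ is exactly (\ref{ss1}), with the quotient map identified as restriction to $G_{24}$ (the edge homomorphism onto the $t = 0$ column).

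The last, and genuinely delicate, point is the splitting. Both outer terms are annihilated by $3$, but this alone does not rule out a $\ZZ/9$-extension, so a separate argument is required. The route I would take is to realize the quotient generator $\Delta^{-1}\alpha\beta^2$ as a product involving the globally defined order-$3$ class $\beta \in H^2(\GG_2, \Et_{12})$: running the same spectral sequence in internal degree $-8$ shows that restriction $H^3(\GG_2, \Et_{-8}) \to H^3(G_{24}, \Et_{-8}) = \ZZ/3\langle\Delta^{-1}\alpha\beta\rangle$ is surjective, since the only differential that could obstruct, a $d_4$ out of $E^{3,0}$, has target the torsion-free group $H^0(G_{24}, \Et_{-8}) = M_{-8}$ and so vanishes. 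Choosing a lift $\gamma$ of $\Delta^{-1}\alpha\beta$, the product $\beta\gamma \in H^5(\GG_2, \Et_4)$ restricts to the quotient generator and satisfies $3\beta\gamma = (3\beta)\gamma = 0$, giving an order-$3$ section and hence $H^5(\GG_2, \Et_4) \cong \ZZ/3 \times \ZZ/3$. I expect the main obstacle to be precisely this extension problem: one must produce the requisite global $3$-torsion classes and, in particular, verify that $\beta$ is genuinely $3$-torsion in $H^2(\GG_2, \Et_{12})$ rather than merely modulo higher filtration. It is here, and not in the bookkeeping of the spectral sequence, that the real work lies.
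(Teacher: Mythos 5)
Your handling of the first two assertions is sound. For sparseness you give a self-contained degree count through the spectral sequence of Remark \ref{ext-to-big} rather than citing the standard result as the paper does; since every $E_1$-entry is a (de)suspension by a multiple of $4$ of either $H^\ast(G_{24},\Et_\ast)$ or $\pi_\ast\Sigma^8E_2^{hSD_{16}}$, both concentrated in internal degrees divisible by $4$, this is a legitimate alternative. The identification of the two surviving entries $E^{1,4}$ and $E^{5,0}$ in total degree $5$, the vanishing of $d_1=\psi^4-1$ on $3$-torsion groups via Remark \ref{action-of-center}, and the resulting short exact sequence are essentially the paper's argument. (One small slip: in internal degree $-8$ the differential you must rule out first is $d_1$ out of $E^{3,0}$ into $E^{3,1}=H^3(G_{24},\Et_{-8})\cong\ZZ/3$, which is a nonzero group, not just the $d_4$ into $M_{-8}$; your global vanishing of $\psi^4-1$ does cover it.)

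The splitting is where there is a genuine gap, and you have named it yourself: your section $\beta\gamma$ has order $3$ only if $3\beta=0$ holds on the nose in $H^2(\GG_2,\Et_{12})$, and nothing in your argument establishes this. It is not a formality: the filtration of $H^2(\GG_2,\Et_{12})$ has, below the $\ZZ/3$ in $E_\infty^{2,0}$, the subgroup $E_\infty^{0,2}$, a subquotient of the nonzero group $\Hom_{\ZZ_3[SD_{16}]}(\ZZ_3(\chi),\Et_{12})$, so whether some lift of $\beta$ is annihilated by $3$ is an extension problem of exactly the same kind as the one you are trying to solve, merely displaced to bidegree $(2,12)$; making $\gamma\in H^3(\GG_2,\Et_{-8})$ carry the torsion instead runs into the same issue with $E_\infty^{0,3}\subseteq M_{-8}\neq 0$. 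The gap is fillable --- for instance, producing $\beta$ as the image of the integral Bockstein $H^1(\GG_2,\Et_{12}/3)\to H^2(\GG_2,\Et_{12})$ forces $3\beta=0$ since that image is precisely the $3$-torsion --- but as written the crucial step is deferred rather than proved. The paper sidesteps the construction of global torsion classes entirely: it maps the sequence (\ref{ss1}) to the corresponding sequence for $M=(E_2/(3,u_1))_4=(\FF_9[u^{\pm1}])_4$ and notes that if (\ref{ss1}) were not split, the class $\alpha$ would be divisible by $3$ in $H^5(\GG_2,\Et_4)$ and hence die in the $3$-torsion group $H^5(\GG_2,M)$, contradicting the fact that $H^1(G_{24},\Et_4)\to H^1(G_{24},M)$ is an isomorphism by Theorem \ref{coh-G24}. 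You should either adopt that comparison or supply the Bockstein construction of $\beta$.
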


\begin{proof} The first statement is simply the standard sparseness
result for the Adams-Novikov Spectral Sequence. For a proof
in this context, see \cite{GHM}, Proposition 4.1.

For the second statement we
calculate using the spectral sequence of Remark \ref{ext-to-big},
which is a variant of the spectral sequence of Corollary 
\ref{SS}. From this spectral sequence and Theorem \ref{coh-G24}
we see that there is a short exact sequence
$$
0 \to E_2^{1,4} \to H^5(\GG_2,\Et_4) \to E_2^{5,0} \to 0
$$
where $E_2^{1,4}$ is cokernel of
$$
d_1: E_1^{1,3} = H^1(G_{24},\Et_4) \to H^1(G_{24},\Et_4) = E_1^{1,4}
$$
and $E_2^{5,0}$ is the kernel of
$$
d_1: E_1^{5,0} = H^5(G_{24},\Et_4) \to H^5(G_{24},\Et_4) = E_1^{5,1}.
$$
This differential is completely determined by the action of the center
of $\GG_2$; indeed, in both cases
$$
d_1 = (\psi^{4} -1)_\ast
$$
where $\psi^{4}$ is a generator for the central $\ZZ_3$.
It then follows from Remark \ref{action-of-center} that $d_1=0$. 

We now use Theorem \ref{coh-G24} to note that
$$
H^5(G_{24},(E_2)_4) \cong \Z/3
$$
generated by $\a\b^2\Delta^{-1}$ and
$$
H^1(G_{24},(E_2)_4) \cong \Z/3
$$
generated by $\alpha$. 

It remains to show that the exact sequence (\ref{ss1}) is split. For this
we compare it with the spectral sequence for the module
$M=(E_2/{(3,u_1)})_4=(\FF_9[u^{\pm 1}])_4$. If the
sequence of (\ref{ss1}) does not split, then the class $\alpha$
would map to zero under the induced map
$$
H^5(\GG_2,(E_2)_4) \to H^5(\GG_2,M).
$$
We will show that, in fact, $\alpha$ does not map to zero. The short exact
sequence of (\ref{ss1}) maps to the analogous short exact sequence
\begin{equation}\label{ss2}
0 \to H^1(G_{24},M) \to H^5(\GG_2,M) \to H^5(G_{24},M)
\to 0\ .
\end{equation}
Thus it is sufficient to argue that the reduction induces an isomorphism
$$
H^1(G_{24},(E_2)_4) \cong H^1(G_{24},M)
$$
and this follows immediately from Theorem \ref{coh-G24}.
\end{proof}

\begin{rem}\label{expl-gen} We can chose
one generator of $H^5(\GG_2,(E_2)_4)$
which restricts to the generator $\a\b^2\Delta^{-1}$
of $H^5(G_{24},(E_2)_4)$. Despite the fact that it is not unique,
we still call this generator $\a\b^2\Delta^{-1}$.
The other generates the kernel of
$$
H^5(\GG_2,E_2)_4)\to H^5(\GG_2^1,(E_2)_4).
$$
In the notation of \cite{GHMV1}, the image of this
element  in $H^5(\GG_2,(E_2)/(p,u_1)_4)$ is
$\zeta\a\b\a_{35}\Delta^{-2}$. We won't use this notation later in this
paper. 
\end{rem}

\begin{const}\label{tau-def} We next construct a homomorphism 
\begin{equation}\label{tau-eq}
\tau:\kappa_2\to H^5(\GG_2,(E_2)_4)
\end{equation}
from the group of exotic elements in $\Pic_2$.  The ideas here
can be found in \cite{HovSad} and \cite{shiPic}.

Let $Z\in \kappa_2$  and consider the
Adams Novikov Spectral Sequence
$$
H^s(\GG_2,(E_2)_tZ) \cong H^2(\GG_2,(E_2)_t Z)
\Longrightarrow \pi_{t-s}Z\ .
$$
A choice of isomorphism of twisted $\GG_2$-modules
$$
f: (E_2)_\ast \mathop{\longr}^{\cong} (E_2)_\ast Z
$$
defines a commutative diagram
$$
\xymatrix{
H^0(\GG_2,(E_2)_0) \rto^{\phi} \dto^\cong_{f_\ast} &H^5(\GG_2,(E_2)_4)
\dto^{f_\ast}_\cong\\
H^0(\GG_2,(E_2)_0Z) \rto_{d_5} & H^5(\GG_2,(E_2)_4Z)\ .
}
$$
We now set
$$
\tau(Z) = \phi(\iota) = f_\ast^{-1} d_5 f_\ast (\iota)
$$
where $\iota \in H^0(\GG_2,(E_2)_0)\cong \Z_3$ is the unit. To
check that this definition is independent of the chosen isomorphism
$f$, note that if $g$ is any other isomorphism, there is a unit
$a \in \Z_3^\times$ so that $g = af$. 
Finally, to check that $\tau$ is a homomorphism, we use the 
K\"unneth isomorphism
$$
(E_2)_\ast Z_1 \otimes_{(E_2)_\ast} (E_2)_\ast Z_2 \cong
(E_2)_\ast (Z_1\wedge Z_2)
$$
and the multiplicative structure of the Adams Novikov Spectral Sequence,
which gives a  Leibniz rule for differentials. 
\end{const}

\begin{rem}\label{pic-at-1-redux} In the case of $\Pic_1$ and $p=2$,
there is an analogous homomorphism
$$
\tau:\kappa_1 \to H^3(\ZZ_2^\times,(E_1)_2) \cong \ZZ/2
$$
which is an isomorphism. Compare Remark \ref{pic-at-1}.
\end{rem}

Our main technical theorem is the following result, proved in the
next section.

\begin{thm}\label{tau-iso} The homomorphism
$$
\tau:\kappa_2\to H^5(\GG_2,(E_2)_4)
$$
is an isomorphism.
\end{thm}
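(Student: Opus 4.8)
The plan is to establish injectivity and surjectivity of $\tau$ separately; it is already a well-defined homomorphism by Construction~\ref{tau-def}, and its target is $\Z/3\times\Z/3$ by Proposition~\ref{ANSS-dets}, so the task is to identify the kernel and the image. I expect injectivity to be the formal part and surjectivity---the actual construction of exotic spectra---to be where the work lies. For injectivity, take $Z\in\kappa_2$ with $\tau(Z)=0$, fix an isomorphism $f$ as in Construction~\ref{tau-def}, and set $\iota_Z\defeq f_\ast(\iota)\in H^0(\GG_2,(E_2)_0 Z)$. By the sparseness statement of Proposition~\ref{ANSS-dets} the only differentials $\iota_Z$ can support are the $d_r$ with $r\equiv 1 \pmod 4$, with $d_r(\iota_Z)\in H^r(\GG_2,(E_2)_{r-1})$; note these targets all lie in the $-1$-stem, since $t-s=-1$. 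The hypothesis gives $d_5(\iota_Z)=\tau(Z)=0$, and to kill the higher $d_9,d_{13},\dots$ I would invoke the global $E_\infty$-vanishing line for exotic spectra proved in Section~4, using it to conclude that $\iota_Z$ survives to $E_\infty$. A homotopy class detecting $\iota_Z$ then gives a map $\phi\colon L_{K(2)}S^0\to Z$; since $\iota_Z$ freely generates the rank-one module $(E_2)_\ast Z$, completeness of $(E_2)_0$ forces $\phi_\ast$ to be an isomorphism on $(E_2)_\ast$, so $\phi$ is a $K(2)$-local equivalence by Theorem~\ref{pic-car} and $Z$ is trivial.

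For surjectivity one must realize both generators of $H^5(\GG_2,(E_2)_4)$, and, as the introduction stresses, the two summands are of different character. The generator $\alpha\beta^2\Delta^{-1}$, which restricts nontrivially to $H^5(G_{24},(E_2)_4)$, should be realizable by an exotic spectrum detected on the finite subgroup $G_{24}$: the failure of $\Delta$ to be a permanent cycle at the $G_{24}$ level, recorded as $d_5(\Delta)=\pm\alpha\beta^2$ in Theorem~\ref{hom-G24}, is the source of this class, and one expects a $\det$-type twist as in Remark~\ref{det-realized} (or the exotic factor of the Gross--Hopkins dual of $S^0$) to carry it. The second generator lies in the kernel of restriction to $\GG_2^1$ and is the truly exotic one, invisible to every finite subgroup; I would build it by perturbing the topological realization of the algebraic resolution of Theorem~\ref{alg-res} and~(\ref{alg-res-2}), altering the gluing data of the resolution of $L_{K(2)}S^0$ by a class projecting to this element so that the resulting $Z$ still has $(E_2)_\ast Z\cong(E_2)_\ast S^0$ but has the prescribed value of $\tau$.

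The main obstacle will be this last construction together with the verification that $\tau$ lands on a \emph{basis} of $\Z/3\times\Z/3$ rather than on a single cyclic summand. Computing $\tau$ on the constructed spectra should reduce, via the K\"unneth and Leibniz inputs of Construction~\ref{tau-def}, to the differential pattern of Theorems~\ref{coh-G24} and~\ref{hom-G24}; but pinning down the truly exotic class---precisely because it cannot be seen on any $E_2^{hG}$---is where the argument must use the full $\GG_2$-spectral sequence of Remark~\ref{ext-to-big} rather than its finite-subgroup approximations, and this is the step I expect to be hardest.
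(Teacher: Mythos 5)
Your overall strategy---injectivity by showing $d_5(\iota_Z)=0$ forces $Z\simeq L_{K(2)}S^0$, surjectivity by constructing two explicit exotic spectra from twisted resolutions---is the paper's strategy, but there are two genuine gaps. First, in the injectivity step: the vanishing line of Theorem \ref{van-line} is $E_{10}^{s,\ast}=0$ for $s\geq 13$, so it disposes of $d_{13}$ and beyond but does \emph{not} kill $d_9(\iota_Z)$, which lands in filtration $9$. The paper closes this in Corollary \ref{d5saysit} by a separate computation: in the algebraic spectral sequence of Remark \ref{ext-to-big} all groups in total degree $9$ contributing to $H^9(\GG_2,(E_2)_8)$ vanish, so the target of $d_9$ on $\iota_Z$ is zero. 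Without that input your argument does not rule out a nontrivial $d_9$.

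Second, the surjectivity half is where the real work lies and your proposal does not carry it out; moreover the route you sketch for verifying that the two constructed elements hit a basis would fail for the truly exotic class. The paper's mechanism is a decomposition $0\to\kappa_2^0\to\kappa_2\to\kappa_2^1\to 0$ (where $\kappa_2^1$ records the $E_2^{hG_{24}}$-module type of $E_2^{hG_{24}}\wedge Z$) matching the split sequence of Proposition \ref{ANSS-dets}. The first generator is realized not by a $\det$-type twist (that is a non-exotic, algebraically visible element of $\Picalg$) but by re-running the construction of the topological resolution of Theorem \ref{top-res} against the resolution of $(E_2)_\ast$ twisted by the algebraic periodicity $\Sigma^{48}$ of $E_2^{hG_{24}}$, giving $P$ with $E_2^{hG_{24}}\wedge P\simeq\Sigma^{48}E_2^{hG_{24}}$ (Theorem \ref{exotic-eo}). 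The second generator cannot be computed ``via the differential pattern of Theorem \ref{hom-G24}'' precisely because it dies on every $E_2^{hG}$; instead one builds $Q_b$ as the fiber of a perturbation $T+b$ of $\psi^4-1\colon E_2^{h\GG_2^1}\to E_2^{h\GG_2^1}$, where $b$ lives in the $\ZZ/3[[T]]$-summand of $[E_2^{h\GG_2^1},E_2^{h\GG_2^1}]$ coming from $\pi_3\Sigma^{48}E_2^{hG_{24}}$, and one shows $Q_b\not\simeq L_{K(2)}S^0$ by a $\pi_0$-computation (the unit does not factor through $Q_b$ when $b\not\equiv 0$ mod $T$); nontriviality of $\tau(Q_b)$ then follows from injectivity of $\tau$ on $\kappa_2^0$, with its value automatically landing in the complementary summand $H^1(G_{24},(E_2)_4)$. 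You correctly identified this as the hardest step, but the proposal stops at the point where the proof has to begin.
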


The following result will be needed later when calculating with our topological
resolutions. Recall from Construction \ref{tau-def} that a choice of isomorphism
$f:\Et_\ast \cong \Et_\ast Z$ defines a unit element 
$$
\iota_Z = f_\ast (\iota) \in H^0(\GG_2,E_0Z) \cong \ZZ_3
$$
unique up to  unit in $\ZZ_3$.

\begin{lem}\label{lem-inj-1} Let $Z \in \kappa_2$ be an exotic element in the Picard group.

1.) There is an integer
$k$, $0 \leq k \leq 2$, so that there is an equivalence of $E_2^{hG_{24}}$-modules
$$
\Sigma^{24k} E_2^{hG_{24}} \simeq E_2^{hG_{24}} \wedge Z.
$$

2.) There is an equivalence of $E_2^{hSD_{16}}$-modules 
$$
E_2^{hSD_{16}} \simeq E_2^{hSD_{16}} \wedge Z.
$$
\end{lem}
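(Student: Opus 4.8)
The plan is to study, for each of the two finite subgroups $G\in\{G_{24},SD_{16}\}$ of Definition \ref{key-finite}, the descent spectral sequence
$$
H^s(G,(E_2)_tZ)\Longrightarrow \pi_{t-s}(E_2^{hG}\wedge Z)
$$
obtained from (\ref{descent-ss}) for the $E_2^{hG}$-module $E_2^{hG}\wedge Z$, and to compare it with the case $Z=S^0$. Since $Z\in\kappa_2$, a chosen trivialization $f:(E_2)_\ast\cong(E_2)_\ast Z$ of twisted $\GG_2$-modules restricts to an isomorphism of twisted $G$-modules. Hence $f$ identifies the $E_2$-page $H^\ast(G,(E_2)_\ast Z)$ with the free module of rank one over $H^\ast(G,(E_2)_\ast)$ on the generator $\iota_Z=f_\ast(\iota)$ in bidegree $(0,0)$. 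Because $E_2^{hG}\wedge Z$ is a module over the ring spectrum $E_2^{hG}$, its spectral sequence is a module spectral sequence over that of $E_2^{hG}$, so its differentials satisfy the Leibniz rule. The strategy is then to exhibit a permanent cycle in filtration zero that generates this free module, and realize it as an equivalence.

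I would dispose of part 2) first, since $SD_{16}$ is a $2$-group and we work $3$-locally, so $H^s(SD_{16},(E_2)_\ast Z)=0$ for $s>0$ and the spectral sequence collapses at $E_2=E_\infty$, concentrated in filtration zero. Then $\iota_Z$ is automatically a permanent cycle and lifts to $x\in\pi_0(E_2^{hSD_{16}}\wedge Z)$. The $E_2^{hSD_{16}}$-module map $E_2^{hSD_{16}}\to E_2^{hSD_{16}}\wedge Z$ sending the unit to $x$ induces $f_\ast$ on $E_\infty$-pages, hence is an isomorphism on homotopy and an equivalence; the generator sits in degree $0$, so no suspension appears.

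For part 1) the $G_{24}$-spectral sequence has nontrivial differentials (Theorem \ref{hom-G24}), and the key input is the differential on the module generator. By Construction \ref{tau-def}, $d_5(\iota_Z)=f_\ast\tau(Z)$ in the $\GG_2$-Adams--Novikov spectral sequence for $Z$, so restricting along $\GG_2\to G_{24}$ gives $d_5(\iota_Z)=\lambda\,\alpha\beta^2\Delta^{-1}$, where $\lambda=\mathrm{res}^{\GG_2}_{G_{24}}\tau(Z)\in H^5(G_{24},(E_2)_4)\cong\Z/3$ is the image of $\tau(Z)$ under the restriction map of (\ref{ss1}). Combining this with $d_5(\Delta)=\pm\alpha\beta^2$ via the Leibniz rule yields
$$
d_5(\Delta^k\iota_Z)=(k+\lambda)\,\Delta^{k-1}\alpha\beta^2\iota_Z,
$$
which (working modulo $3$, as $3\alpha=0$) vanishes exactly when $k\equiv-\lambda\pmod 3$. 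I would therefore choose $k\in\{0,1,2\}$ with $k\equiv-\lambda$. The class $\Delta^k\iota_Z$ then lies in filtration zero, so it cannot be hit; and since the only differentials in the $G_{24}$-spectral sequence are $d_5$ and $d_9$ (Theorem \ref{hom-G24}), while $H^9(G_{24},(E_2)_{24k+8})=0$ by the explicit description in Theorem \ref{coh-G24}, the class supports no $d_9$ either and is a permanent cycle. It lifts to $x\in\pi_{24k}(E_2^{hG_{24}}\wedge Z)$, and the $E_2^{hG_{24}}$-module map $\Sigma^{24k}E_2^{hG_{24}}\to E_2^{hG_{24}}\wedge Z$ determined by $x$ sends the shifted unit to $\Delta^k\iota_Z$. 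As both $E_2$-pages are free of rank one over $H^\ast(G_{24},(E_2)_\ast)$, this is an isomorphism of spectral sequences, hence an equivalence.

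The main obstacle is the identification $d_5(\iota_Z)=\lambda\,\alpha\beta^2\Delta^{-1}$ with $\lambda$ the restriction of $\tau(Z)$: this is precisely the step that couples the module structure of the $G_{24}$-spectral sequence to the global invariant $\tau$ and forces the suspension by $24k$. Once it is established, the verification that no higher differential can touch $\Delta^k\iota_Z$ reduces to the sparseness of $H^\ast(G_{24},(E_2)_\ast)$ and is a routine check on internal degrees modulo $24$.
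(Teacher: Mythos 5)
Your proposal is correct and follows essentially the same route as the paper: identify the $G$-homotopy fixed point spectral sequence for $E_2^{hG}\wedge Z$ as a rank-one module over that of $E_2^{hG}$, observe that $d_5(\iota_Z)$ is a multiple of $\alpha\beta^2\Delta^{-1}\iota_Z$ so that the Leibniz rule kills $d_5(\Delta^k\iota_Z)$ for a unique $k$ mod $3$, check that no further differential can occur for degree reasons, and extend the resulting class to a module equivalence (with the $SD_{16}$ case collapsing since $H^{>0}(SD_{16},-)=0$ at $p=3$). Your extra identification of the coefficient with the restriction of $\tau(Z)$ to $H^5(G_{24},(E_2)_4)$ is a harmless refinement the paper does not need.
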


\begin{proof} For Part (1) we examine the fixed point spectral sequence reads
\begin{equation}\label{fpss}
H^s(G_{24},\Et_t Z) \Longrightarrow \pi_{t-s}(E_2\wedge Z)^{hG_{24}}.
\end{equation}
We note $(E_2\wedge Z)^{hG_{24}} \simeq E_2^{hG_{24}}\wedge Z$ because $Z$ is
a dualizable object in the $K(2)$-local category. This spectral sequence  (\ref{fpss})
is a module over the spectral sequence
$$
H^s(G_{24},\Et_t ) \Longrightarrow \pi_{t-s}E_2^{hG_{24}}.
$$
and the $E_2$ term of  (\ref{fpss}) is free of rank $1$ over $H^s(G_{24},\Et_t )$ on the class
$\iota_Z \in H^0(G_{24},\Et_0)$. There is an element $b \in \FF_3$ so that 
$$
d_5(\iota_Z ) = b\alpha\beta^2\Delta^{-1}\iota_Z\ .
$$
Then the differential $d_5(\Delta) = \pm \alpha\beta^2$ 
implies there is a $k$, $0 \leq k \leq 2$, so that 
$$
d_5(\Delta^k\iota_Z) = 0.
$$
For degree reasons, $\Delta^k\iota_Z$ is a permanent cycle. Comparing the
$E_\infty$-terms of the spectral sequence show that after extending the
resulting map $S^{24k} \to E_2^{hG_{24}} \wedge Z$ to a module map
$\Sigma^{24k} E_2^{hG_{24}} \to E_2^{hG_{24}} \wedge Z$ gives the needed equivalence.

Part (2) is similar, but easier, as $H^s(SD_{16},\Et_t Z) =H^s(SD_{16},\Et_t)=0$ for $s > 0$.
\end{proof}

\section{Topological resolutions and vanishing lines}

In order to prove Theorem \ref{tau-iso} and complete the calculation
of $\kappa_2$ we must use that the algebraic resolution of
Theorem \ref{alg-res} has a topological refinement. This is the 
main theorem of \cite{GHMR}, and we begin the section 
by recalling those results. This has other uses as well, and we will give  a proof
of the existence of a horizontal vanishing line for the Adams-Novikov Spectral
Sequence for any exotic element in the Picard group.

\subsection{Topological resolutions}

The isomorphism $(E_2)_\ast E_2\cong \map(\GG_2,\Et_\ast)$ 
of Equation (\ref{e-hom-e}) has the following refinement
for any closed subgroup of $K$ of $\GG_2$. As in \S 2 of \cite{GHMR},
there is an isomorphism, natural in $K$,
\begin{equation}\label{e-hom-e2}
(E_2)_\ast E_2^{hK} \cong \map(\GG_2/K,(E_2)_\ast).
\end{equation}
From this it follows that if we apply the functor
$$
\Hom_{\ZZ_3[[\GG_2]]}(-,(E_2)_\ast E_2) =
\Hom_{\ZZ_3[[\GG_2]]}(-,\map(\GG_2,(E_2)_\ast)
$$
to the first resolution of Theorem \ref{alg-res} induced up from
$\GG_2^1$ to $\GG_2$ we get a resolution
of $\Et_\ast E_2^{h\GG_2^1}$ by twisted $\GG_2$-modules
\begin{equation}\label{e-res-simp}
\begin{array}{ll}
\Et_\ast E_2^{h\GG_2^1} \to \Et_\ast E_2^{hG_{24}} \to&
\Et_\ast \Sigma^8E_2^{hSD_{16}}\\
\\
&\to \Et_\ast \Sigma^{40}E_2^{hSD_{16}} \to \Et_\ast \Sigma^{48}
E_2^{hG_{24}} \to 0\ .
\end{array}
\end{equation}
We have $\Sigma^8E_2^{hSD_{16}}$ because $C_1$ is twisted
by a character; also, $\Sigma^8E_2^{hSD_{16}} \simeq
\Sigma^{40}E_2^{hSD_{16}}$. See Remark \ref{GHMR-input}. However
$$
\Sigma^{48} E_2^{hG_{24}} \not\simeq E_2^{hG_{24}}
$$
even though
$$
\Et_\ast \Sigma^{48} E_2^{hG_{24}} \cong \Et_\ast E_2^{hG_{24}}.
$$
This suspension is crucial to 
 realization
of the resolution of (\ref{e-res}) of Theorem \ref{top-res}. See also
Remark \ref{alg-vs-top-period-1} for more on the role of suspensions.

For the sphere itself we use the second resolution 
of Theorem \ref{alg-res} to get a resolution of $\Et_\ast$ as
a twisted $\GG_2$-module:
\begin{equation}\label{e-res}
\begin{array}{ll}
\Et_\ast  \to &\Et_\ast E_2^{hG_{24}} \to
\Et_\ast \Sigma^8E_2^{hSD_{16}}\times \Et_\ast E_2^{hG_{24}}\\
\\
&\to \Et_\ast \Sigma^{40}E_2^{hSD_{16}}
\times \Et_\ast \Sigma^8E_2^{hSD_{16}}\\
\\
&\to \Et_\ast \Sigma^{48} E_2^{hG_{24}} 
\times \Et_\ast \Sigma^{40}E_2^{hSD_{16}} \to
\Et_\ast \Sigma^{48} E_2^{hG_{24}}  \to 0\ .
\end{array}
\end{equation}

The following theorem is the main result of \S 5 of \cite{GHMR}.

\begin{thm}\label{top-res} There is a sequence of maps between
spectra
\begin{align*}
L_{K(2)}S^0 \to E_2^{hG_{24}} \to
\Sigma^8E_2^{hSD_{16}} \times E_2^{hG_{24}}
& \to \Sigma^{40}E_2^{hSD_{16}}\times \Sigma^8E_2^{hSD_{16}}\\
\\
&\to \Sigma^{48} E_2^{hG_{24}} \times \Sigma^{40}E_2^{hSD_{16}} 
\to \Sigma^{48} E_2^{hG_{24}}
\end{align*}
realizing the resolution (\ref{e-res}) and
with the property that any two successive maps are null-homotopic
and all possible Toda brackets are zero modulo indeterminacy.
\end{thm}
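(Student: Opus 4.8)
The plan is to realize the algebraic resolution (\ref{e-res}) one boundary map at a time and then to splice the realizing maps into a tower. Every term of (\ref{e-res}) is the $\Et_\ast$-homology of a (possibly suspended) homotopy fixed point spectrum $E_2^{hF}$ for a finite subgroup $F \in \{G_{24}, SD_{16}\}$, and each of these spectra is already constructed, so the content is to realize the maps. The basic mechanism is that homotopy classes of maps between the building blocks are accessible: for finite $F_1, F_2$ the function spectrum $F(\Sigma^a E_2^{hF_1}, \Sigma^b E_2^{hF_2})$ is the $F_2$-homotopy fixed points of $F(\Sigma^a E_2^{hF_1}, E_2)$, so it carries a spectral sequence with input continuous group cohomology $H^s(F_2, -)$ --- computed using the identification (\ref{e-hom-e2}) of $\Et_\ast E_2^{hF_1}$ with $\map(\GG_2/F_1, \Et_\ast)$ --- whose edge homomorphism records the induced map of twisted $\GG_2$-modules on $\Et_\ast$. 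Realizing a boundary map $\partial_i$ of (\ref{e-res}) thus reduces to showing that $\partial_i$, viewed as a $\GG_2$-module map, is hit by this edge homomorphism.

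First I would build the maps inductively. The initial map $L_{K(2)}S^0 = E_2^{h\GG_2} \to E_2^{hG_{24}}$ is the canonical one induced by the inclusion $G_{24} \hookrightarrow \GG_2$; it realizes the augmentation $D_0 \to \Z_3$ of Remark \ref{2from1}. For each subsequent $\partial_i$ I would compute the relevant mapping groups with the spectral sequence above, feeding in the explicit homotopy of $E_2^{hG_{24}}$ and $E_2^{hSD_{16}}$ from Theorems \ref{coh-G24} and \ref{hom-G24} and Remark \ref{GHMR-input}. Sparseness of $H^\ast(\GG_2, \Et_\ast)$ (Proposition \ref{ANSS-dets}) together with the vanishing lines forces the positive-filtration obstruction groups to vanish in precisely the internal degrees separating source and target of each $\partial_i$; this both produces a realizing map and pins down which suspensions are allowed. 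In particular the shift to $\Sigma^{48}E_2^{hG_{24}}$ is dictated here: since $\Sigma^{48}E_2^{hG_{24}} \not\simeq E_2^{hG_{24}}$ while the periodicity is only $\Sigma^{72}E_2^{hG_{24}} \simeq E_2^{hG_{24}}$ (Theorem \ref{hom-G24}(2)), one is forced to carry the suspension that actually supports the realizing class, whereas for the $SD_{16}$-terms the equivalence $\Sigma^8 E_2^{hSD_{16}} \simeq \Sigma^{40}E_2^{hSD_{16}}$ makes the choice harmless.

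The harder step is to assemble these maps into a genuine tower. Exactness of (\ref{e-res}) forces each successive composite $\partial_{i-1}\partial_i$ to vanish on $\Et_\ast$, but this only places the topological composite in positive Adams filtration, so I must produce actual null-homotopies and then arrange them to be mutually compatible. The obstruction to compatibility is exactly the vanishing, modulo indeterminacy, of the triple Toda brackets $\langle \partial_{i-2}, \partial_{i-1}, \partial_i\rangle$: once these contain zero, the null-homotopies can be adjusted so that the maps fit into a filtered spectrum whose associated graded is (\ref{e-res}) and whose totalization recovers $L_{K(2)}S^0$. I would check the requisite null-homotopies and bracket vanishing by the same degree bookkeeping as in the construction of the maps --- after accounting for the suspensions, the groups housing these composites and brackets sit in internal degrees cut out by sparseness and the vanishing lines, hence are either zero or too small to obstruct.

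I expect the main obstacle to be this splicing step, not the individual realizations. The homotopy of $E_2^{hG_{24}}$ carries the $3$-torsion classes $\alpha$, $\beta$ and the bracket class $x = \langle \alpha, \alpha, \beta^2\rangle$ of Theorem \ref{hom-G24}(3) precisely in the low degrees where the boundary maps live, so the relevant brackets are not obviously zero and must be controlled using the explicit differential pattern $d_5(\Delta) = \pm\alpha\beta^2$ and $d_9(\alpha\Delta^2) = \pm\beta^5$. The delicate point is coherence: one needs a single choice of maps and null-homotopies making \emph{all} successive composites null and \emph{all} brackets vanish simultaneously, rather than term by term, and it is this global consistency --- together with the forced suspensions --- that makes the realization of (\ref{e-res}) genuinely nontrivial.
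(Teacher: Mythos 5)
Your outline follows essentially the same route as the actual proof: the paper itself offers no argument for Theorem \ref{top-res} beyond citing it as the main result of \S 5 of \cite{GHMR}, and the strategy there is exactly what you describe --- realize each algebraic boundary map by computing $[\Sigma^a E_2^{hF_1},\Sigma^b E_2^{hF_2}]$ via descent/fixed-point spectral sequences fed by the identification $(E_2)_\ast E_2^{hF}\cong \map(\GG_2/F,(E_2)_\ast)$ and the known cohomology of $G_{24}$ and $SD_{16}$, with the $\Sigma^{48}$ forced by the failure of $\Delta$ to survive, and then splice by killing composites and Toda brackets. Be aware that your appeal to ``sparseness and vanishing lines'' understates the work: the obstruction groups do not all vanish for free, and the bulk of \cite{GHMR} \S 5 consists of the explicit computations of these mapping groups and bracket indeterminacies that your plan defers.
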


There is an analogous topological resolution of $E_2^{h\GG_2^1}$ 
realizing the algebraic resolution of (\ref{e-res-simp}).

Let us write $F_s$ for the successive terms in the topological resolution
of Theorem \ref{top-res}. Thus $F_0 = E_2^{hG_{24}}$, 
$$
F_1 = \Sigma^8E_2^{hSD_{16}} \times E_2^{hG_{24}}\ ,
$$
on so on through $F_4$.  Then Theorem \ref{top-res} implies that
there is a tower of fibrations
\begin{equation}\label{key-tower}
\xymatrix@R=15pt{
L_{K(2)}S^0 \rto & X_3 \rto & X_2\rto & X_1 \rto &E^{hG_{24}}=F_0\ .\\
\Sigma^{-4}F_4\ar[u] & \Sigma^{-3}F_3\ar[u]&
\Sigma^{-2}F_2\ar[u] & \Sigma^{-1}F_1\ar[u]
}
\end{equation}
This tower yields a spectral sequence
$$
E_1^{s,t} = \pi_tF_s \Longrightarrow \pi_{t-s}L_{K(2)}S^0.
$$

\subsection{The vanishing line}

The $E_\infty$-term of the Adams-Novikov Spectral Sequence for exotic element $Z \in \kappa_2$
has a horizontal vanishing line at $s=13$. This is implicit in known results, especially work of 
Shimomura and his coauthors. For the sphere itself, see \cite{sh-w}. For a general
$Z$, the result can be deduced after a bit of work from \cite{IchSm}. We include here a
proof that uses our technology.

\begin{thm}\label{van-line} Let $Z \in \kappa_2$. Then the Adams-Novikov Spectral
Sequence
$$
E_2^{s,t}=H^s(\GG_2,(E_2)_t) \Longrightarrow \pi_{t-s}Z
$$
has a horizontal vanishing line
$$
E_{10}^{s,t}=0,\quad s \geq 13.
$$
\end{thm}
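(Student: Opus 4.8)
The plan is to smash the topological tower (\ref{key-tower}) with $Z$ and to read off the vanishing line from the fixed-point spectral sequences of its layers. Since $Z \in \kappa_2$, an isomorphism $\Et_\ast \cong \Et_\ast Z$ of twisted $\GG_2$-modules identifies the $E_2$-page of the Adams--Novikov spectral sequence for $Z$ with $H^\ast(\GG_2,\Et_\ast)$, exactly as for the sphere. Because $Z$ is dualizable in $\cK_2$, smashing (\ref{key-tower}) with $Z$ produces a finite tower of the same length whose layers are the suspensions $\Sigma^{-s}F_s \wedge Z$; by Lemma \ref{lem-inj-1} each $F_s \wedge Z$ is, after suspending its $E_2^{hG_{24}}$-factors by multiples of $24$ and leaving its $E_2^{hSD_{16}}$-factors unchanged, the very same product of copies of $E_2^{hG_{24}}$ and $E_2^{hSD_{16}}$ that makes up $F_s$. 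In particular the fixed-point spectral sequences computing $\pi_\ast(F_s \wedge Z)$ carry the same $E_2$-pages and the same differentials as those for $S^0$, reindexed only in internal degree.

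First I would locate the classes of high filtration on the $E_2$-page. Writing $p$ for the resolution degree and $q$ for the cohomological degree of the relevant building block, the algebraic resolution spectral sequence of Corollary \ref{SS}, in the form of Remark \ref{ext-to-big}, detects a class of $H^n(\GG_2,\Et_\ast)$ with $n = p+q$ and $0 \le p \le 4$. The resolution position $p=2$ involves only $SD_{16}$, which has no higher cohomology at the prime $3$, so it contributes only for $q=0$; more generally every $SD_{16}$-summand contributes only in degree $q=0$ and hence only to total filtration $n \le 4$. Thus any class of filtration $n \ge 13$ is detected by $H^{q}(G_{24},\Et_\ast)$ with $q = n-p \ge 13-4 = 9$, and the $SD_{16}$-summands are irrelevant in this range.

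Next I would record the vanishing line for the building block. By Theorem \ref{hom-G24}(1) every differential in the fixed-point spectral sequence for $E_2^{hG_{24}}$ is a consequence of $d_5(\Delta) = \pm\alpha\beta^2$ and $d_9(\alpha\Delta^2) = \pm\beta^5$, so this spectral sequence collapses at its $E_{10}$-page. By Theorem \ref{hom-G24}(3), together with the $72$-periodicity of Theorem \ref{hom-G24}(2), the classes surviving in positive cohomological degree are concentrated in degrees $q \le 8$, the top one being $\beta^4 \in H^8(G_{24},\Et_{48})$. Hence $E_{10}^{q,\ast} = 0$ for $q \ge 9$ in the fixed-point spectral sequence of $E_2^{hG_{24}}$, and the identical statement holds for $E_2^{hG_{24}} \wedge Z \simeq \Sigma^{24k}E_2^{hG_{24}}$ since suspension by a multiple of $24$ only shifts internal degrees.

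Finally I would assemble these facts. The tower for $Z$ filters $\pi_\ast Z$ with associated graded computed by the fixed-point spectral sequences of its layers, and the maps of spectral sequences induced by the tower carry the differentials $d_5$ and $d_9$ of the previous paragraph to Adams--Novikov differentials for $Z$ of length at most $9$ that annihilate every class detected in $G_{24}$-cohomological degree $q \ge 9$. Since, by the second paragraph, every class of filtration $s \ge 13$ is so detected, it follows that $E_{10}^{s,t}(Z) = 0$ for $s \ge 13$. The main obstacle is precisely this transport step: one must verify that a fixed-point differential in a single layer genuinely induces an Adams--Novikov differential of the same length for $Z$, and that the attaching data of the resolution --- whose consecutive composites and higher Toda brackets vanish by Theorem \ref{top-res} --- produces no longer differential reaching into the region $s \ge 13$, so that the line sits exactly at $s=13$ and is attained already on the $E_{10}$-page.
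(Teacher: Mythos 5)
Your reduction of the problem is sound as far as it goes: by Lemma \ref{hkm-coll} and Remark \ref{ext-to-big} every class of Adams--Novikov filtration $s\geq 13$ is detected in resolution degree $p\leq 4$ by a class of $H^{q}(G_{24},\Et_\ast)$ with $q\geq 9$, the $SD_{16}$-terms are invisible in this range, and $E_{10}=E_\infty$ of the $G_{24}$-fixed-point spectral sequence vanishes for $q\geq 9$. But the step you yourself flag as ``the main obstacle'' is a genuine gap, not a routine verification, and your proposal does not close it. The difficulty is twofold. First, the tower (\ref{key-tower}) smashed with $Z$ computes $\pi_\ast Z$ by a spectral sequence whose $E_1$-term is $\pi_\ast(F_p\wedge Z)$; this is a different spectral sequence from the Adams--Novikov spectral sequence, and a $d_5$ or $d_9$ in the fixed-point spectral sequence of a single layer determines the corresponding Adams--Novikov differential only \emph{modulo elements of higher resolution filtration} (this is exactly the $\equiv$ appearing in Remark \ref{inv-sphere}). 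Second, and more seriously, a class $\bar x\in H^{q}(G_{24},\Et_\ast)$ with $q\geq 9$ may die in the fixed-point spectral sequence by being \emph{hit} by a differential from a class $\bar w$ in degree $q-5$ or $q-9\leq 8$; to conclude that the corresponding Adams--Novikov class in filtration $s\geq 13$ is also hit, you must produce a class $w$ on the appropriate Adams--Novikov page lifting $\bar w$, know that it has not already supported or received a shorter differential, and control the correction terms in lower resolution degree. None of this follows automatically from the vanishing of composites and Toda brackets in Theorem \ref{top-res}.

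The paper closes this gap with a device absent from your proposal: the $\beta_1$-localized spectral sequence $\beta^{-1}E_r^{\ast,\ast}Z$, which converges to $\pi_\ast(\beta_1^{-1}Z)=0$ and whose $E_2$-term is the completely regular algebra $[\FF_3[\Delta^{\pm1},\beta^{\pm1}]\otimes\Lambda(\alpha,\zeta,e)]\iota_Z$. There the differentials $d_5$ and $d_9$ visibly annihilate everything, so $\beta^{-1}E_{10}=0$, and Lemma \ref{hkm-coll} shows the localization map $E_2^{\ast,\ast}Z\to\beta^{-1}E_2^{\ast,\ast}Z$ is an isomorphism for $s>4$. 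The proof then constructs explicit sub-differential-modules $A$ and $B$ interpolating between the two spectral sequences so as to transfer the vanishing of $\beta^{-1}E_{10}$ back to $E_{10}^{s,\ast}Z$ for $s\geq 13$ --- precisely the bookkeeping your transport step would have to reproduce. To complete your argument you should either import this localization comparison or supply a substitute for it; as written, the proof is incomplete at its central point.
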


This implies, among other things, that the Adams-Novikov Spectral Sequence has only
$d_5$ and $d_9$. The proof will occupy the remainder of the section.

Let $\{X_q\}$ be the tower (\ref{key-tower}) with layers $F_q$ refining the topological resolution
of $L_{K(2)}S^0$. Let $C_0 = L_{K(2)}S^0$ and define spectra $C_q$ by the cofiber
sequence
$$
L_{K(2)}S^0 \to X_{q-1} \to \Sigma^{-q+1}C_q.
$$
Then there are cofiber sequences
$$
C_q \to F_q \to C_{q+1}
$$
which induce short exact sequences in $(E_2)_\ast$-homology. This can be summarized
in the diagram
\begin{equation}\label{exact-c}
\xymatrix@C=10pt@R=10pt{
L_{K(2)}S^0\ar[dr] && \ar@{-->}[ll]C_1\ar[dr]
&& \ar@{-->}[ll]C_2 \ar[dr]
&& \ar@{-->}[ll]C_3\ar[dr]
&& \ar@{-->}[ll]C_4\ar[dr]^\cong\\
&F_0 \ar[ur]&&F_1 \ar[ur]&&
F_2 \ar[ur]&&F_3 \ar[ur]
&&F_4
}
\end{equation}
where the dotted arrows are maps $C_q \to \Sigma C_{q-1}$.

For any spectrum
$X$ write $E_r^{s,t}X$ for the terms in the Adams-Novikov Spectral Sequence
$$
E_2^{s,t}X = H^s(\GG_2,(E_2)_tX) \Longrightarrow \pi_{t-s}L_{K(2)}X.
$$
Write $C_qX$ for $L_{K(2)}(C_q \wedge X)$ and
$F_qX$ for $L_{K(2)}(F_q \wedge X)$.

Using the cofiber sequences of (\ref{exact-c}) we 
then  obtain a spectral sequence
\begin{equation}\label{funny-ss}
E_1^{p,q,\ast}=H^p(\GG_2,(E_2)_\ast F_qX) \Longrightarrow H^{p+q}(\GG_2,(E_2)_\ast X)
\end{equation} 
from the exact couple
$$
\xymatrix@C=1pt@R=10pt{
E_2^{\ast,\ast}C_qX \ar[dr]
&& \ar@{-->}[ll]E_2^{\ast,\ast}C_{q+1}X\\
&E_2^{\ast,\ast}F_qX \ar[ur]}
$$
The dashed arrows raise cohomological degree by 1. This is isomorphic to the
spectral sequence obtained from the resolution of Remark \ref{2from1}; compare
Corollary \ref{SS}. 

\begin{lem}\label{hkm-coll} Let $Z \in \kappa_2$. The spectral sequence
\begin{equation}\label{funny-ss-z}
E_1^{p,q,\ast}=H^p(\GG_2,(E_2)_\ast F_qZ) \Longrightarrow H^{p+q}(\GG_2,(E_2)_\ast )
\end{equation} 
collapses at the first term for $p+q > 4$.  
\end{lem}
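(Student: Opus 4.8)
The plan is to identify the $E_1$-page in the range $p+q>4$ and then show that every differential touching that range vanishes, using that all differentials in (\ref{funny-ss-z}) preserve the internal degree $t$. First I would pin down the $E_1$-terms. Because $Z$ is exotic, Lemma \ref{lem-inj-1} identifies each $F_q\wedge Z$ with a suspension of $F_q$: the $E_2^{hSD_{16}}$-summands are unshifted, while the $E_2^{hG_{24}}$-summands are shifted by $24k$ for a fixed $k\in\{0,1,2\}$. Applying $H^\ast(\GG_2,(E_2)_\ast(-))$, using the Shapiro-type identifications behind the resolution (\ref{e-res}) and the fact (Remark \ref{GHMR-input}) that the $2$-group $SD_{16}$ has no higher cohomology $3$-locally, I find that for $p\geq 1$ the only nonzero columns are $q=0,1,3,4$, each isomorphic to $H^p(G_{24},(E_2)_\ast)$ up to an internal shift by a multiple of $24$, while the column $q=2$ vanishes. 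The key observation is that $p+q>4$ together with $q\leq 4$ forces $p\geq 1$, so the entire region of interest lies in the torsion part of $G_{24}$-cohomology described by Theorem \ref{coh-G24}; moreover the $24k$-shift contributed by $Z$ is invisible modulo $24$, so the bookkeeping below is identical to that for the sphere.

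Next I would dispose of $d_1$. On the $G_{24}$-columns the $d_1$-differential is $(\psi^4-1)_\ast$ by Remark \ref{ext-to-big}; since $H^p(G_{24},(E_2)_\ast)$ is $3$-torsion for $p\geq 1$ and the central element $\psi^4$ acts by a unit congruent to $1$ modulo $3$ (Remark \ref{action-of-center}), this map vanishes exactly as in the proof of Proposition \ref{ANSS-dets}. Hence $E_2=E_1$ throughout the region $p\geq 1$.

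The main obstacle is the vanishing of the higher differentials $d_2,d_3,d_4$ (the resolution has length $4$, so there are no longer ones). Each of these runs from a source in columns $\{0,1\}$, at some internal degree $d$, to a target in columns $\{3,4\}$, whose internal degree is $d-48$ because of the $\Sigma^{48}$ appearing in (\ref{e-res}); since differentials preserve $t$ and $48\equiv 0\pmod{24}$, source and target sit at the same residue $d$ modulo $24$. Now I invoke the precise form of Theorem \ref{coh-G24}: for $p>0$ the group $H^p(G_{24},(E_2)_t)$ is at most one-dimensional over $\FF_3$, spanned by $\beta^i\Delta^j$ when $p=2i$ and by $\alpha\beta^i\Delta^j$ when $p=2i+1$, so its internal degree is $\equiv 0$ or $12\pmod{24}$ for $p$ even and $\equiv 4$ or $16\pmod{24}$ for $p$ odd. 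The differentials $d_2$ and $d_4$ lower the cohomological degree by the odd numbers $1$ and $3$, so source and target have opposite parity; as the two parities have disjoint residues modulo $24$, the common residue $d$ cannot support both ends, whence $d_2=d_4=0$. The differential $d_3$ lowers the cohomological degree by $2$, so source and target share a parity and a residue, but matching the full internal degrees across the $48$-shift would require a half-integral power of $\Delta$ in the target, which is impossible, so $d_3=0$ as well. Finally I would check that nothing enters the region from the boundary total degree $4$: such a source would lie either in a column with $p=0$, where for $r\geq 2$ the target has negative or out-of-range coordinates, or in one of the positions $(1,3),(3,1),(4,0)$ already covered above. Assembling these vanishing statements yields $E_1=E_\infty$ for $p+q>4$, the claimed collapse.
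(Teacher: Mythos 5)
Your argument is correct, but it takes a genuinely different route from the paper's. The paper first notes that the spectral sequence is literally independent of $Z$ (since $(E_2)_\ast F_qZ\cong (E_2)_\ast F_q$ once $(E_2)_\ast Z\cong (E_2)_\ast$), and then compares with the corresponding spectral sequence for $V(1)$: the reduction $H^p(G_{24},(E_2)_\ast)\to H^p(G_{24},(E_2)_\ast/(3,u_1))$ is injective for $p>1$, and the $V(1)$-spectral sequence collapses at $E_2$ with only $d_1$'s on the $p=0$ line (Theorems A.2 and A.3 of \cite{HKM}), which forces the longest possible differential for the sphere to run from total degree $3$ to total degree $4$. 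You instead give a direct sparseness argument: after killing $d_1=(\psi^4-1)_\ast$ exactly as in the proof of Proposition \ref{ANSS-dets}, you use Theorem \ref{coh-G24} — for $p\geq 1$ the group $H^p(G_{24},(E_2)_t)$ is $\FF_3[\Delta^{\pm 1}]\{\alpha^\epsilon\beta^i\}$ in cohomological degree $p=\epsilon+2i$, hence concentrated in a single residue class of $t$ modulo $24$ determined by $p$ — together with the facts that $d_r$ preserves the internal degree and that the $\Sigma^{48}$ in columns $3$ and $4$ is invisible modulo $24$, to rule out $d_2$, $d_3$, $d_4$ one by one; your parity argument for $d_2,d_4$ and the half-integral-power-of-$\Delta$ argument for $d_3$ both check out, as does the verification that every differential touching the region $p+q>4$ has source and target in columns with $p\geq 1$. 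Your proof has the advantage of being self-contained within the paper's stated results, needing no input from \cite{HKM}; the paper's is shorter given that input and reuses a comparison-with-$V(1)$ technique that recurs elsewhere. One minor simplification: the appeal to Lemma \ref{lem-inj-1} is unnecessary, since the $E_1$-term and all differentials depend only on the Morava module $(E_2)_\ast Z\cong(E_2)_\ast$, so the spectral sequence is isomorphic to the one for the sphere with no shift at all.
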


\begin{proof} It is sufficient to prove this for $Z = L_{K(2)}S^0$, as the spectral sequence 
(\ref{funny-ss-z})  doesn't depend on $Z \in \kappa_2$. 
Let $V(1)$ be the cofiber of the Adams map from
the Moore spectrum to itself. Then for all $q$ there is an injection
$$
H^p(\GG_2,(E_2)_\ast F_q) \to H^p(\GG_2,(E_2)_\ast F_q V(1))
$$
for $p > 1$. To see this, note that we need only show that
$$
H^p(G_{24},(E_2)_\ast) \longr H^p(G_{24},(E_2)_\ast/(3,u_1))
$$
is an injection for $p>1$ and this is a simple calculation. See, for example, Theorem A.2 of
\cite{HKM}.  

To complete the proof, we see note that the spectral sequence(\ref{funny-ss}) for $V(1)$
collapses at the second page, with only differentials $d_1$ on the $p=0$ line.
This follows from Theorem A.3 of \cite{HKM}.
From this we deduce that  in the spectral sequence for $S^0$, the longest potentially
non-zero differential is from $E_4^{0,3,\ast}$ to $E_4^{4,0,\ast}$. 
The result then follows.
\end{proof}

We will prove Theorem \ref{van-line} by comparing the Adams-Novikov Spectral Sequence
with another spectral sequence which agrees with the
Adams-Novikov Spectral Sequence in high degrees, but which is much more regular.

Let $\beta_1 \in \pi_{10}S^0$ be the usual element of order 3.
Define $\beta_1^{-1}S^0$ to be the homotopy colimit of
$$
\xymatrix{
S^0 \rto^{\beta_1} & S^{-10} \rto^{\beta_1} & S^{-20} \rto^{\beta_1} &\cdots
}
$$
As $\beta_1$ is nilpotent, the spectrum $\beta^{-1}S^0$ is contractible. 
If $X$ is any other spectrum, let 
$$
\beta_1^{-1} X = \beta_1^{-1}S^0 \wedge X.
$$
Since $\beta_1$ is detected by $\beta \in E_2^{2,12}S^0$, we then
obtain a localized spectral 
$$
\beta^{-1}E_2^{\ast,\ast}(X) \Longrightarrow \pi_\ast \beta^{-1}L_{K(2)}X = 0
$$
which, since colimits and limits do not always commute, does not obviously 
converge. In all our examples it will converge as we will have
$\beta^{-1}E_{10}^{\ast,\ast}(X)=0$.

\begin{exam}\label{tate-G24} As a warm-up, let's consider the case of
$\beta_1^{-1}E_2^{hG_{24}}$. Theorem \ref{coh-G24} gives an isomorphism
$$
\FF_3[\Delta^{\pm 1},\beta^{\pm 1}] \otimes \Lambda(\alpha)
\cong \beta^{-1}H^\ast(G_{24},(E_2)_\ast). 
$$
Then the differential $d_5(\Delta) =\pm  \alpha\beta^2$ (see Theorem \ref{hom-G24}) gives
$$
\beta^{-1}E_6^{\ast,\ast} = \FF_3[\Delta^{\pm 3},\beta^{\pm 1}] \otimes \Lambda(\alpha\Delta^2)
$$
and the differential $d_9(\alpha\Delta^2) = \pm \beta^5$ gives $\beta^{-1}E_{10}^{\ast,\ast}=0$.
The equivariantly minded reader will recognize $\beta_1^{-1}E_2^{hG_{24}}$ as the Tate
spectrum of the $G_{24}$-action on $E_2$.
\end{exam}

\begin{rem}\label{inv-sphere} We now examine the spectral sequence beginning with
$\beta^{-1}E_2^{\ast,\ast}S^0$  for the sphere
itself. First we combine Lemma \ref{hkm-coll} and Theorem \ref{coh-G24} to
deduce that $\beta^{-1}E_2^{\ast,\ast}S^0$ has a filtration with associated graded of the form
$$
\FF_3[\Delta^{\pm 1},\beta^{\pm 1}] \otimes \Lambda (\alpha, \zeta, e)
$$
The elements $\alpha$, $\beta$ and $\zeta$ come from the sphere and have
canonical lifts. We chose lifts of $\Delta$ and of $e$ and, at the risk of causing
a great deal of confusion, call the lifts by the same names as their residue classes.
In this way we obtain an isomorphism of filtered differential graded algebras 
$$
\FF_3[\Delta^{\pm 1},\beta^{\pm 1}] \otimes \Lambda (\alpha, \zeta, e)
\cong \beta^{-1}E_2^{\ast,\ast}Z.
$$
The differential is given by $d_5$. Recall that $F_3$ and $F_4$ have copies of $\Sigma^{48}
E_2^{hG_{24}}$; therefore,  applying Theorem \ref{hom-G24}, we deduce that
this differential satisfies
\begin{align*}
d_5(\Delta) &\equiv \pm \alpha\beta^2\\
d_5(e) &\equiv \pm \alpha\beta^2\Delta^{-1}e
\end{align*}
where $\equiv$ means ``modulo elements of higher filtration". All other generators
$x$ have $d_5(x) =0$, since they detect permanent cycles in $E_2^{\ast,\ast}S^0$.

We now have a spectral sequence for $\beta^{-1}E_6^{\ast,\ast}S^0$ which initial
term
$$
\FF_3[\Delta^{\pm 3},\beta^{\pm 1}]\{1,\alpha\Delta^2\}
\otimes \Lambda (\zeta, \Delta^2e).
$$
The differential $d_r$ of this auxiliary spectral sequence changes degree as follows:
$$
(p,q,t) \longmapsto (p+5-r,q+r,t+4).
$$
As a result, the spectral sequence collapses for degree reasons and hence
$\beta^{-1}E_6^{\ast,\ast}S^0$ is isomorphic to this algebra, with the choices above.
They only other differential remaining is $d_9$, which is determined by
$$
d_9 (\alpha\Delta^2) \equiv \pm \beta^5.
$$
The differential on all other generators is either zero outright or zero modulo
elements of higher filtration. We have $\beta^{-1}E_{10}^{\ast,\ast}S^0 = 0$.
\end{rem}

\begin{rem}\label{inv-gen} Now let $Z \in \kappa_2$ be any exotic element of the
Picard group. By similar reasoning to that of Remark \ref{inv-sphere}, we  deduce
that $\beta^{-1}E_2^{\ast,\ast}Z$ is
a filtered differential module over $\beta^{-1}E_2^{\ast,\ast}S^0$ which is free of rank
$1$ on a generator $\iota_Z$ of bidegree $(0,0)$. The differentials in
$\beta^{-1}E_2^{\ast,\ast}Z$  are determined by the differentials in $\beta^{-1}E_2^{\ast,\ast}S^0$
and the differential
$$
d_5(\iota_Z) \equiv \mp k\alpha\beta^2\Delta^{-1}\iota_Z
$$
where $k$, $0 \leq k \leq 2$, is the integer so that $E_2^{hG_{24}}\wedge Z =
\Sigma^{24k}E_2^{hG_{24}}$. 

Again arguing as in Remark \ref{inv-sphere}, we see that $\beta^{-1}E_6^{\ast,\ast}Z$ is free of
rank $1$ over $\beta^{-1}E_6^{\ast,\ast}S^0$ on $\Delta^k\iota_Z$; again we have
$\beta^{-1}E_{10}^{\ast,\ast}Z=0$.
\end{rem}

We now begin an analysis of the localization map of spectral sequences.
From Lemma \ref{hkm-coll}, we know
$$
E_2^{\ast,\ast}Z \to \beta^{-1}E_2^{\ast,\ast}Z \cong [\FF_3[\Delta^{\pm 1},\beta^{\pm 1}]
\otimes \Lambda (\alpha, \zeta, e)]\iota_Z
$$
is an isomorphism in homological degrees $s > 4$. To get some hold on the image define a 
sub-differential graded module
$$
A \defeq \FF_3[\Delta^{\pm 1},\beta]\{\beta^2,\alpha\beta^2,\beta^2\zeta,
\alpha\beta\zeta,
\beta e,\alpha e,\beta\zeta e, \alpha \zeta e\} \subseteq \beta^{-1}E_2^{\ast,\ast}S^0.
$$
This inclusion is also an isomorphism in degrees $s > 4$. Since the 
longest differential in the algebraic spectral sequence (\ref{funny-ss}) is
a $d_4$, the inclusion of $A \to \beta^{-1}E_2^{\ast,\ast}S^0$ factors through
$E_2^{\ast,\ast}S^0 \to  \beta^{-1}E_2^{\ast,\ast}S^0$. This factoring is unique in
degrees $s > 4$, which implies that the factoring automatically commutes with
$d_5$. Choose a factoring and identify $A$ with its image in $E_2^{\ast,\ast}S^0$.

Next observe that 
$A\iota_Z \subseteq E_2^{\ast,\ast}Z$ is an inclusion of a differential submodule
and an isomorphism in degree $s > 4$. From this we deduce that 
the map
$$
H^s(A\iota_Z,d_5) \to E_6^{s,\ast}Z
$$
is onto for $s > 4$ and an isomorphism for $s > 9$. This implies that the map
$$
E_6^{\ast,\ast}Z \to \beta^{-1}E_6^{\ast,\ast}Z
$$
is an isomorphism in degrees $s > 9$.

We now move on to the calculation of $d_9$. The inclusion
of $A$, as above, defines an inclusion
$$
B\defeq \FF_3[\Delta^{\pm 3},\beta]\{\beta^2,\alpha\beta^2\Delta^2,\beta^2\zeta,
\alpha\beta \zeta\Delta^2,
\beta\Delta^2e,\alpha\Delta^4 e,\beta\Delta^2\zeta e, \alpha\Delta^4
 \zeta e\} \subseteq H^\ast A \to  E_6^{\ast,\ast}S^0
$$
which maps injectively through to $\beta^{-1}E_6^{\ast,\ast}S^0$.
Then
$$
B\Delta^k\iota_Z  \to \beta^{-1}E_9^{s,\ast}Z
$$
is onto in degrees $s > 4$ and we have
that the inclusion $B\Delta^k\iota_Z  \to E_6^{\ast,\ast}Z$ is an isomorphism in degrees $s > 9$. 
Since  $B$ has no elements of degree $s=0$, $B$ is closed under $d_9$.
We then have 
$$
H^\ast (B\Delta^k\iota_Z ,d_9) \cong E_{10}^{\ast,\ast}Z
$$
is an isomorphism for degree $s > 10$. The $d_9$ differentials in $B$ are 
all induced from the formula $d_9(\alpha\Delta^2) = \pm \beta^5$; hence,
$H^\ast (B,d_9)=0$ in degrees $s > 12$ and we can deduce Theorem \ref{van-line}.

There are non-zero elements of degree $s=12$; for example, the elements
$$
e\zeta\Delta^2\Delta^{3k}\beta^4
$$
are of $s$ filtration 12 and non-zero in $E_{10}^{\ast,\ast}S^0$. Many of these detect permanent 
cycles in $\pi_\ast L_{K(2)}V(1)$; see \cite{GHMV1}.

\begin{cor}\label{d5saysit} Let $Z$ be an element of $\kappa_2$ so that $d_5(\iota_Z) = 0$.
Then $\iota_Z$ is a permanent cycle and $L_{K(2)}S^0 \simeq Z$.
\end{cor}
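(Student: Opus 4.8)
The plan is to produce an honest map $g:L_{K(2)}S^0\to Z$ realizing the algebraic isomorphism $f$, and then to verify it is an equivalence by an $(E_2)_\ast$-calculation. The entire difficulty is concentrated in showing that the hypothesis $d_5(\iota_Z)=0$ actually makes $\iota_Z$ survive the Adams--Novikov Spectral Sequence. By Theorem \ref{van-line} the only possibly nonzero differentials on $\iota_Z$ are $d_5$ and $d_9$, so it suffices to also kill $d_9(\iota_Z)$.

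First I would feed the hypothesis back into the geometry of $Z$. Since localization to the $\beta$-inverted spectral sequence commutes with differentials, the vanishing of $d_5(\iota_Z)$ forces the localized differential $\mp k\,\alpha\beta^2\Delta^{-1}\iota_Z$ of Remark \ref{inv-gen} to vanish; as $\alpha\beta^2\Delta^{-1}\iota_Z\neq 0$ in the $\beta$-inverted $E_2$-term, this gives $k=0$. Equivalently, restricting along $G_{24}\subseteq\GG_2$ and invoking Lemma \ref{lem-inj-1} yields $E_2^{hG_{24}}\wedge Z\simeq E_2^{hG_{24}}$. With $k=0$ in hand, the Leibniz rule gives $d_5^{Z}=d_5^{S^0}\otimes\mathrm{id}$ on the free rank-one module $E_2^{\ast,\ast}Z\cong E_2^{\ast,\ast}S^0\{\iota_Z\}$, so $E_6^{\ast,\ast}Z\cong E_6^{\ast,\ast}S^0\{\iota_Z\}$ and the residual differential writes $d_9(\iota_Z)=c\,\iota_Z$ for a class $c\in E_9^{9,8}S^0=E_6^{9,8}S^0$.

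The main obstacle is to show $c=0$. In the $\beta$-localized spectral sequence $\iota_Z$ matches the unit of $\beta^{-1}E_6^{\ast,\ast}S^0$, a permanent cycle by Remark \ref{inv-sphere}, so $c$ maps to zero under $\beta$-localization and is therefore $\beta$-torsion. The delicate point is that the localization map $E_6^{\ast,\ast}S^0\to\beta^{-1}E_6^{\ast,\ast}S^0$ is only known to be an isomorphism for $s>9$, and $c$ sits exactly at the edge $s=9$. I expect to clear this by a direct inspection of $E_9^{9,8}S^0$, computed from the algebraic spectral sequence of Remark \ref{ext-to-big} together with Theorem \ref{coh-G24}, to confirm that it carries no $\beta$-torsion in this bidegree (indeed the candidate classes at $(s,t)=(9,8)$ are all $\beta$-periodic, as witnessed by the surjection from $A\iota_Z$ in the proof of Theorem \ref{van-line}); a cleaner variant is to multiply by $\beta$, pushing $\beta\cdot d_9(\iota_Z)=d_9(\beta\iota_Z)$ into filtration $s=11>9$ where localization is injective, which forces $\beta c=0$, and then to rule out the residual $\beta$-torsion in the source. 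Either way one obtains $c=0$, so $d_9(\iota_Z)=0$ and $\iota_Z$ is a permanent cycle.

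Finally, once $\iota_Z\in E_\infty^{0,0}Z$ is a permanent cycle it lifts, there being no lower filtration, to an element of $\pi_0 Z\cong[L_{K(2)}S^0,Z]$, giving a map $g:L_{K(2)}S^0\to Z$. By construction the edge homomorphism sends $g$ to the generator $\iota_Z$ of the free rank-one $(E_2)_\ast$-module $(E_2)_\ast Z$, so $g_\ast$ carries a generator to a generator and is an isomorphism of $(E_2)_\ast$-modules. Since a map of $K(2)$-local spectra inducing an isomorphism on $(E_2)_\ast$ is an equivalence, $g$ exhibits $L_{K(2)}S^0\simeq Z$, completing the argument.
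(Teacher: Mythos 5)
Your argument is correct and, at its core, arrives at the same decisive step as the paper's own proof: by the vanishing line everything reduces to showing $d_9(\iota_Z)=0$, and the input you name for this -- an inspection of the Adams--Novikov term in bidegree $(s,t)=(9,8)$ using the algebraic spectral sequence of Remark \ref{ext-to-big} together with Theorem \ref{coh-G24} -- is exactly what the paper does. The difference is that you reach this point through a $\beta$-localization detour and then defer the inspection (``I expect to clear this\ldots''), whereas the paper simply performs it and finds that the \emph{entire} group $H^9(\GG_2,(E_2)_8)$ vanishes: by Theorem \ref{coh-G24} every class of positive cohomological degree in $H^\ast(G_{24},(E_2)_\ast)$ is a multiple of $\Delta^k\alpha^a\beta^b$ with $a\in\{0,1\}$ and internal degree $24k+4a+12b$, which is never congruent to $8$ modulo $12$; hence all $E_1$-terms of Remark \ref{ext-to-big} in total degree $9$ and internal degree $8$ are zero. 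Once the target group is known to be zero, the discussion of $\beta$-torsion at the edge $s=9$, the multiplication-by-$\beta$ trick, and the deduction $k=0$ via Lemma \ref{lem-inj-1} (which in any case follows directly from the hypothesis $d_5(\iota_Z)=0$ and the Leibniz rule) all become unnecessary. So there is no error, but the one computation you postpone \emph{is} the proof and should be written out; note also that your fallback remark that the candidate classes at $(9,8)$ are ``all $\beta$-periodic'' is only vacuously true, since there are no such classes.
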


\begin{proof} By the vanishing result, we need only check that $d_9(\iota_Z)=0$.
In the spectral sequence of Remark \ref{ext-to-big} for computing $H^\ast(\GG_2,(E_2)_8)$
all groups in total degree $9$ vanish, hence $H^9(\GG_2,(E_2)_8) = 0$.
\end{proof}

\section{The decomposition of the group of exotic elements}

In order to prove Theorem \ref{tau-iso} and complete the calculation
of $\kappa_2$  we discuss which exotic
elements in $\kappa_2$ can be detected by $E_2^{hG_{24}}$.
This will leave a subgroup of order $3$ which cannot be seen
by $E_2^{hG_{24}}$; this subgroup is discussed in the last subsection.

Here is an outline of the arguments of this section.
Let $\kappa_2^1$ denote the set of weak
equivalence classes of $E_2^{hG_{24}}$-modules $X$ so that
$$
\Et_\ast X \cong \Et_\ast E_2^{hG_{24}}
$$
as twisted $\GG_2$-modules. Then $\kappa^1_2$ is a group
under smash product over $E_2^{hG_{24}}$. There is
a homomorphism
$$
\tau^1:\kappa_2^1 \longr H^5(G_{24},\Et_4) \cong \ZZ/3
$$
defined, as in Construction \ref{tau-def}, using $d_5$ in the Adams-Novikov
Spectral Sequence and the Schapiro isomorphism
$$
H^\ast (\GG_2, (E_2)_\ast X) \cong H^\ast(G_{24},(E_2)_\ast).
$$
We will see in Proposition
\ref{alg-vs-top-period-2} that  this map is a surjection. It is also an injection,
for if $d_5(\iota_X) = 0$, then $\iota_X$ is a permanent cycle
and the resulting homotopy class can be extended to
an equivalence $E_2^{hG_{24}} \simeq X$.

There is a map $\kappa_2 \to \kappa_2^1$ sending $Z$
to $E_2^{hG_{24}}\wedge Z$ and a commutative
diagram
$$
\xymatrix{
\kappa_2 \rto^-\tau \dto & H^5(\GG_2,\Et_4) \dto\\
\kappa_2^1 \rto^-\cong_-{\tau^1} & H^5(G_{24},\Et_4)
}
$$
where the map on group cohomology is the restriction.
Theorem \ref{exotic-eo} below shows that the map $\kappa_2 \to
\kappa_2^1$ is onto. We now let $\kappa_2^0$ be the kernel. From
Proposition \ref{ANSS-dets} we then get an induced map
$$
\tau^0:\kappa_2^0 \longr H^1(G_{24},\Et_4) \cong \ZZ/3
$$
and we must show that this map is an isomorphism. This is accomplished
in Propositions \ref{tau-inj} and \ref{tau-sur-1}.

\subsection{Exotic elements detected by $E_2^{hG_{24}}$} Let's begin
by revisiting the case $n=1$ and $p=2$. See Remark \ref{pic-at-1}.

\begin{exam}\label{back-to-one} At $p=2$ and $n=1$, there is very short
resolution
\begin{equation}\label{alg-res-k}
\xymatrix{
K_\ast \rto & K_\ast KO \rto^{\psi^3-1} & K_\ast KO \rto & 0
}
\end{equation}
of $K_\ast = K_\ast S^0$ as a continuous module over the Adams operations.
This can be realized by the fiber sequence
$$
\xymatrix{
L_{K(1)}S^0 \rto & KO \rto^{\psi^3-1} & KO.
}
$$
However, there is a another topological realization of the resolution
(\ref{alg-res-k}) using the fact that the Bott periodicity isomorphism
$$
\Sigma^4 K_\ast K \mathop{\longr}^{\cong} K_\ast K
$$
induces an isomorphism of $\GG_1$-modules
$$
\Sigma^4 K_\ast  KO \mathop{\longr}^{\cong} K_\ast KO
$$
which cannot be realized topologically. We then get a fiber sequence
$$
\xymatrix{
X \rto & \Sigma^4 KO \ar[rr]^{\Sigma^4 3^{-2}\psi^3-1}
&& \Sigma^4 KO
}
$$
defining an exotic element $X \in \kappa_1$. The map $X \to \Sigma^4KO$ extends to 
a weak equivalence $KO \wedge X \simeq \Sigma^4 KO$ of $KO$-modules. Since 
$\kappa_1 \cong \ZZ/2$, this implies $X \simeq L_{K(1)}DQ$, the non-zero
element. Note this construction produces only $L_{K(1)}DQ$ and not the finite complex $DQ$
itself.
\end{exam}

We now do something similar, but slightly more elaborate, at the prime $3$. 
Let $G \subseteq \GG_2$ be a finite
subgroup. The invariants $H^0(G,(E_2)_\ast)$ form a graded ring.
Suppose we can choose an invertible element $x$ in this ring
of degree $t$. Then $x$ defines an isomorphism of
$(E_2)_\ast[G]$-modules
$$
x:\Sigma^t (E_2)_\ast \mathop{\longr}^{\cong} (E_2)_\ast
$$
which extends to an isomorphism of twisted $\GG_2$-modules
$$
\alpha:\Sigma^t \map(\GG_2/G,(E_n)_\ast) \mathop{\longr}^{\cong}
\map(\GG_2/G,(E_2)_\ast)
$$
given by
$$
\alpha(\Sigma^t\phi)(g) = (gx)\phi(g).
$$
We are using the diagonal $\GG_2$-action; compare Equation \ref{diagonal}.
This isomorphism of Equation \ref{e-hom-e} induces an isomorphism
$$
(E_2)_\ast E_2^{hG} \cong \map(\GG_n/G,(E_2)_\ast)
$$
and so $x$ yields an algebraic isomorphism
$$
(E_2)_\ast \Sigma^t E_2^{hG} \mathop{\longr}^{\cong}
(E_2)_\ast E_2^{hG}
$$
which may not be realized topologically. If $x$ is a unit of minimal
positive degree, then we say that $E_2^{hG}$ has {\it algebraic periodicity}
$t$ and $x$ is a periodicity operator. This construction works,
of course, at all primes $p$ and all $n$. From \S 3 of \cite{GHMR}
we have the following result. See also Theorem \ref{coh-G24}.

\begin{lem}\label{alg-vs-top-period}  1.) The fixed point spectrum
$E_2^{hG_{24}}$ has algebraic
periodicity $24$ and
$\Delta \in H^0(G_{24},(E_2)_{24})$ is a periodicity operator.

2.) The fixed point spectrum $E_2^{hSD_{16}}$  has algebraic
periodicity $16$ and $v_2 = u^{-8}$ is a periodicity operator.
\end{lem}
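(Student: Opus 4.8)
The plan is to reduce both assertions to the structure of the two graded invariant rings $H^0(G_{24},\Et_\ast)=(\Et_\ast)^{G_{24}}$ and $H^0(SD_{16},\Et_\ast)=(\Et_\ast)^{SD_{16}}$, which are recorded in Theorem \ref{coh-G24} and Remark \ref{GHMR-input}. By the definition preceding the lemma, the algebraic periodicity of $E_2^{hG}$ is the least positive internal degree carrying an invertible homogeneous element, so in each case I must (i) produce a unit in the claimed degree and (ii) rule out units in all smaller positive degrees. For (ii) the efficient device is to reduce modulo the maximal ideal of the degree-zero part and use that a ring homomorphism sends units to units.

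For part (1), Theorem \ref{coh-G24} gives $(\Et_\ast)^{G_{24}}=M_\ast=\ZZ_3[[j]][c_4,c_6,\Delta^{\pm 1}]/(c_4^3-c_6^2=(12)^3\Delta,\ \Delta j=c_4^3)$, in which $\Delta$ is invertible of degree $24$; this is (i). The two relations reduce every monomial, so $M_\ast$ is free over $\ZZ_3[[j]][\Delta^{\pm 1}]$ on $c_4^ac_6^b$ with $0\le a\le 2,\ 0\le b\le 1$, which occupy the six distinct residues modulo $24$. To find the units I reduce modulo the maximal ideal $\mathfrak m=(3,j)$ of the local ring $\ZZ_3[[j]]$. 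Here is where the prime enters: since $3^3\mid (12)^3$, both $c_4^3=j\Delta$ and $c_6^2=(j-(12)^3)\Delta$ vanish modulo $\mathfrak m$, giving $M_\ast/\mathfrak m M_\ast\cong \FF_3[\Delta^{\pm 1}]\otimes\FF_3[c_4,c_6]/(c_4^3,c_6^2)$. In this quotient $(c_4,c_6)$ is a nilpotent ideal, so a homogeneous unit cannot lie in it and hence maps to a nonzero unit of $\FF_3[\Delta^{\pm 1}]$, forcing its degree to be divisible by $24$. A unit of $M_\ast$ reduces to a unit here, so its degree is divisible by $24$ as well; with (i) this pins the periodicity at $24$, with periodicity operator $\Delta$.

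Part (2) runs along the same lines and is easier. As $SD_{16}$ is a $2$-group, $H^s(SD_{16},-)$ vanishes for $s>0$ on $3$-local modules, so $(\Et_\ast)^{SD_{16}}=\pi_\ast E_2^{hSD_{16}}$, and by \cite{GHMR} \S 3 (see Remark \ref{GHMR-input}) this graded ring is $\ZZ_3[[y]][v_1,v_2^{\pm 1}]/(v_2 y=v_1^4)$, where $v_1=u^{-2}u_1$, $v_2=u^{-8}=u^{-(p^2-1)}$ and $y=u_1^4$ have internal degrees $4$, $16$ and $0$ by Equation \eqref{LT-vi}. Since $v_2^{\pm 1}$ appears, $v_2$ is an invertible invariant of degree $16$, which is (i). For (ii) I reduce modulo the maximal ideal $(3,y)$: the relation then forces $v_1^4=0$, so the quotient is $\FF_3[v_2^{\pm 1}]\otimes\FF_3[v_1]/(v_1^4)$, in which $v_1$ is nilpotent, and exactly as before every unit has degree divisible by $16$. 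Hence no unit has positive degree below $16$, and $v_2$ realizes the minimum.

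The structural inputs — freeness over the power-series ring once the relations are used, and the vanishing of higher $SD_{16}$-cohomology — are immediate from the cited results, so I do not expect a serious obstacle. The one load-bearing point, and the only place the prime $p=3$ is used, is the arithmetic fact $3^3\mid(12)^3$ for $G_{24}$ (equivalently, that $c_6^2/\Delta=j-(12)^3$ lies in $\mathfrak m$) together with its analogue $v_1^4/v_2=y\in(3,y)$ for $SD_{16}$: these are precisely what keep $c_4,c_6,v_1$ and their products from being invertible and thereby fix the periodicities at $24$ and $16$ rather than at a proper divisor.
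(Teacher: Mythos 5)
Your argument is correct and follows the same route the paper intends: the lemma is deduced directly from the invariant-ring computations of \cite{GHMR} \S 3 recorded in Theorem \ref{coh-G24} and Remark \ref{GHMR-input}, with $\Delta$ and $v_2$ supplying the units. The paper simply cites those computations, while you additionally verify minimality of the degrees by reducing modulo the maximal ideal; that check is sound and is the only content the citation leaves implicit.
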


\begin{rem}\label{alg-vs-top-period-1} The algebraic periodicity
on $E_2^{hSD_{16}}$ can be realized topologically, because
there can be no differentials in the fixed point spectral sequence.
However, the algebraic periodicity of $E_2^{hG_{24}}$ cannot
be realized topologically as $\Delta$ is not a permanent cycle.
(See Theorem \ref{hom-G24}.) Topologically this spectrum has periodicity
$72$ with periodicity operator $\Delta^3$.
\end{rem}

\begin{prop}\label{alg-vs-top-period-2} The homomorphism
$$
\tau^1:\kappa_2^1 \to H^5(G_{24},\Et_4)\cong \Z/3
$$
is an isomorphism.
\end{prop}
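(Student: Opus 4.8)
The plan is to establish the two directions separately. The injectivity was essentially recorded in the discussion preceding the statement and rests on a degree count; the surjectivity is the substantive point and comes, exactly as in Example \ref{back-to-one}, from the gap between the \emph{algebraic} periodicity $24$ and the \emph{topological} periodicity $72$ of $E_2^{hG_{24}}$ noted in Remark \ref{alg-vs-top-period-1}.

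For injectivity I would take $X \in \kappa_2^1$ with $\tau^1(X) = 0$ and argue that $X \simeq E_2^{hG_{24}}$. Unwinding Construction \ref{tau-def} together with the Shapiro isomorphism $H^\ast(\GG_2,\Et_\ast X) \cong H^\ast(G_{24},\Et_\ast)$, the hypothesis says $d_5(\iota_X) = 0$ in the fixed-point spectral sequence for $X$. Since $X$ is algebraically equivalent to $E_2^{hG_{24}}$ this spectral sequence has the same $E_2$-page, and by Theorem \ref{hom-G24} the only possible differentials are $d_5$ and $d_9$. A glance at Theorem \ref{coh-G24} shows that the only classes of cohomological degree $9$ are the $\alpha\beta^4\Delta^j$, of internal degree $52 + 24j \ne 8$; hence $H^9(G_{24},\Et_8) = 0$ and $d_9(\iota_X) = 0$ for degree reasons. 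Thus $\iota_X$ is a permanent cycle, detected by a class in $\pi_0 X$ whose extension to an $E_2^{hG_{24}}$-module map $E_2^{hG_{24}} \to X$ is an isomorphism on $\Et_\ast$ and therefore an equivalence.

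For surjectivity I would produce an explicit generator. By Lemma \ref{alg-vs-top-period} the class $\Delta \in H^0(G_{24},\Et_{24})$ is a periodicity operator, so the construction preceding that lemma (with $G = G_{24}$, $t = 24$, $x = \Delta$) yields an isomorphism of twisted $\GG_2$-modules $\Et_\ast \Sigma^{24}E_2^{hG_{24}} \cong \Et_\ast E_2^{hG_{24}}$; hence $\Sigma^{24}E_2^{hG_{24}}$, viewed as an $E_2^{hG_{24}}$-module, represents an element of $\kappa_2^1$. Under this isomorphism the generator $\iota_X$ corresponds to $\Delta^{-1}$ in the fixed-point spectral sequence of $E_2^{hG_{24}}$, so the Leibniz rule applied to $d_5(\Delta) = \pm\alpha\beta^2$ (Theorem \ref{hom-G24}) gives $d_5(\Delta^{-1}) = \mp\alpha\beta^2\Delta^{-2}$; transporting back through the periodicity isomorphism yields $\tau^1(\Sigma^{24}E_2^{hG_{24}}) = \mp\alpha\beta^2\Delta^{-1}$, which by Proposition \ref{ANSS-dets} generates $H^5(G_{24},\Et_4) \cong \Z/3$.

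Combining the two parts proves the proposition. Once injectivity into $\Z/3$ is in hand one could shortcut the computation by a counting argument: the modules $E_2^{hG_{24}}$, $\Sigma^{24}E_2^{hG_{24}}$, $\Sigma^{48}E_2^{hG_{24}}$ all lie in $\kappa_2^1$ and are pairwise inequivalent, since $\Sigma^{24k}E_2^{hG_{24}} \simeq E_2^{hG_{24}}$ requires a unit in $\pi_{24k}E_2^{hG_{24}}$, which by Remark \ref{alg-vs-top-period-1} occurs only when $72 \mid 24k$, i.e. $3 \mid k$; so $|\kappa_2^1| \ge 3$ and an injection into $\Z/3$ must be an isomorphism. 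The main obstacle is really a bookkeeping one: checking that $\Sigma^{24}E_2^{hG_{24}}$ genuinely lies in $\kappa_2^1$ while being nontrivial, which is precisely the assertion that the algebraic periodicity $24$ is strictly finer than the topological periodicity $72$.
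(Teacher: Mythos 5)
Your proof is correct and follows essentially the same route as the paper: injectivity via the observation that $d_5(\iota_X)=0$ forces $\iota_X$ to be a permanent cycle (which the paper records just before the proposition), and surjectivity from the gap between the algebraic periodicity $24$ and the topological periodicity $72$ of $E_2^{hG_{24}}$, i.e.\ Remark \ref{alg-vs-top-period-1}, with $\Sigma^{24}E_2^{hG_{24}}$ realizing the generator $\pm\alpha\beta^2\Delta^{-1}$. Your version merely makes explicit the degree count for $d_9$ and the Leibniz computation of $d_5(\Delta^{-1})$ that the paper leaves implicit.
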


\begin{proof} We already noted at the beginning of the section
that the map is a monomorphism. Remark \ref{alg-vs-top-period-1}
implies that it is also an epimorphism.
\end{proof}

The next result now says that the map
$\kappa_2 \to \kappa_2^1$ sending $Z$ to $E_2^{hG_{24}}\wedge Z$
is surjective.

\begin{thm}\label{exotic-eo}There exists an exotic
element $P \in \kappa_2$ so that
$$
E_2^{hG_{24}} \wedge P \simeq \Sigma^{48}E_2^{hG_{24}}.
$$
\end{thm}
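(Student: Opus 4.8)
The plan is to realize a \emph{twisted} form of the topological resolution of Theorem \ref{top-res}, in exact analogy with the construction of the exotic element at $n=1$, $p=2$ in Example \ref{back-to-one}. There one suspends the two-term resolution of $L_{K(1)}S^0$ by the algebraic period of $KO$ and twists the differential $\psi^3-1$ by the scalar $3^{-2}$, so that the resulting fiber $X$ has $K_\ast X\cong K_\ast S^0$ (hence is exotic) yet is not $L_{K(1)}S^0$, precisely because the twist cannot be undone by a topological equivalence $\Sigma^4 KO\simeq KO$. I would carry out the analogous construction using the length-four resolution of $L_{K(2)}S^0$ and the periodicity operator $\Delta$, the role of the non-realizable Bott isomorphism now being played by a power of $\Delta$.

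First I would isolate the non-realizable isomorphism. By Lemma \ref{alg-vs-top-period} and Remark \ref{alg-vs-top-period-1}, $\Delta$ is an algebraic periodicity operator of degree $24$ for $E_2^{hG_{24}}$, so multiplication by $\Delta^2$ is an isomorphism of twisted $\GG_2$-modules
$$
\Delta^2:\Et_\ast \Sigma^{48}E_2^{hG_{24}} \mathop{\longr}^{\cong} \Et_\ast E_2^{hG_{24}}\ ;
$$
since the topological periodicity is $72$, only $\Delta^3$ is a permanent cycle, and this isomorphism is \emph{not} induced by a map of spectra. By contrast the periodicity $v_2=u^{-8}$ of $E_2^{hSD_{16}}$ (degree $16$) \emph{is} realizable. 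The algebraic resolution (\ref{e-res}) therefore admits a chain automorphism $\rho$ of internal degree $48$, given by $\Delta^2$ on each $E_2^{hG_{24}}$-summand and by $v_2^{3}$ on each $E_2^{hSD_{16}}$-summand; these match up to a unit in $(E_2)_0$ under the connecting differentials, since $\Delta^2$ and $v_2^{3}=u^{-24}$ agree up to such a unit. The idea is to realize $\rho$ topologically on the $SD_{16}$-summands, where it is a genuine equivalence, while correcting the $G_{24}$-differentials by the scalar making homology return to $\Et_\ast S^0$ (the analog of the factor $3^{-2}$). This produces a new tower whose layers are the suspensions $\Sigma^{48}F_q$ but whose attaching maps are twisted; I would define $P$ to be its total fiber. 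By construction $\Et_\ast P\cong\Et_\ast S^0$, so $P$ is invertible by Theorem \ref{pic-car} and lies in $\kappa_2$.

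It then remains to identify $E_2^{hG_{24}}\wedge P$. For this I would invoke Lemma \ref{lem-inj-1}, which already gives $E_2^{hG_{24}}\wedge P\simeq \Sigma^{24k}E_2^{hG_{24}}$ for some $0\le k\le 2$, so only $k=2$ needs checking. Smashing the twisted tower with $E_2^{hG_{24}}$ splits the $SD_{16}$-layers into suspensions of $E_2^{hG_{24}}$ with the realizable $v_2^{3}$-twist visible, and tracking the non-realizable $\Delta^2$-twist through the fixed-point spectral sequence of Theorem \ref{hom-G24} should force $d_5(\iota_P)=\pm\alpha\beta^2\Delta^{-1}\iota_P\neq 0$, which by the computation in Lemma \ref{lem-inj-1} pins down $k=2$ and hence $E_2^{hG_{24}}\wedge P\simeq \Sigma^{48}E_2^{hG_{24}}$. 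The hard part will be constructing the twisted tower as a genuine topological object: because $\Delta^2$ is only \emph{partially} realizable, the twisted attaching maps are not literally the suspensions of the old ones, so I cannot simply quote the null-homotopies and vanishing Toda brackets of Theorem \ref{top-res} but must re-establish them in the twisted setting, and must check that the scalar corrections are consistent all the way around the length-four resolution. Controlling these obstructions is where essentially all of the work lies.
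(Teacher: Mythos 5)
Your proposal follows essentially the same route as the paper: twist the algebraic resolution (\ref{e-res}) by the non-realizable degree-$48$ periodicity isomorphisms $\Et_\ast \Sigma^{48}E_2^{hF}\cong \Et_\ast E_2^{hF}$, realize the twisted resolution topologically (the paper notes that the obstruction-theoretic arguments of Theorem 5.5 of \cite{GHMR} go through verbatim, addressing the Toda-bracket worry you raise), and take $P$ to be the top of the resulting tower. The only minor difference is at the last step: rather than invoking Lemma \ref{lem-inj-1} and chasing $d_5(\iota_P)$, the paper identifies $E_2^{hG_{24}}\wedge P$ directly by extending the tower's bottom map $P\to\Sigma^{48}E_2^{hG_{24}}$ to a module map and checking on $(E_2)_\ast$-homology that it realizes the algebraic periodicity isomorphism.
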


\begin{proof} We proceed as in Example \ref{back-to-one} and twist
the resolution of Theorem \ref{top-res} in order to produce
$P$. We use the isomorphisms
$$
\Et_\ast \Sigma^{48}E_2^{hF} \cong \Et_\ast E_2^{hF}
$$
and the resolution \ref{e-res} to produce a new resolution
$$
\begin{array}{ll}
\Et_\ast  \to &\Et_\ast \Sigma^{48}E_2^{hG_{24}} \to
\Et_\ast \Sigma^{56}E_2^{hSD_{16}}\times \Et_\ast 
\Sigma^{48}E_2^{hG_{24}}\\
\\
&\to \Et_\ast \Sigma^{88}E_2^{hSD_{16}}
\times \Et_\ast \Sigma^{56}E_2^{hSD_{16}}\\
\\
&\to \Et_\ast \Sigma^{96} E_2^{hG_{24}} 
\times \Et_\ast \Sigma^{88}E_2^{hSD_{16}} \to
\Et_\ast \Sigma^{96} E_2^{hG_{24}}  \to 0
\end{array}
$$
This has a topological realization. In fact, the arguments of
Theorem 5.5 of \cite{GHMR} go through verbatim to
produce the sequence
\begin{align*}
\Sigma^{48} E_2^{hG_{24}} \to
\Sigma^{56} E_2^{hSD_{16}} \times \Sigma^{48} E_2^{hG_{24}}
& \to \Sigma^{88}E_2^{hSD_{16}}\times \Sigma^{56}E_2^{hSD_{16}}\\
\\
&\to \Sigma^{96} E_2^{hG_{24}} \times \Sigma^{88}E_2^{hSD_{16}} 
\to \Sigma^{96} E_2^{hG_{24}} 
\end{align*}
with all the necessary Toda brackets zero modulo
indeterminacy. Notice that each of the spectra in this new
resolution are spectra of the old resolution, suspended $48$ times;
however, the maps are not simple suspensions of the old maps.
Then $P$ is the top of the resulting tower. The induced
map $P \to \Sigma^{48}E_2^{hG_{24}}$
from the top of the tower to the bottom extends a map
$$
E_2^{hG_{24}} \wedge P \to \Sigma^{48}E_2^{hG_{24}}
$$
of $E_2^{hG_{24}}$-modules and we'd like to see that this is an equivalence.
To see this, we consider the following commutative diagram
$$
\xymatrix{
(E_2)_\ast (E_2^{hG_{24}} \wedge P) \rto \dto &(E_2)_\ast \Sigma^{48}E_2^{hG_{24}}\dto\\
\map(\GG_2/G_{24},(E_2)_\ast X) \rto & \map(\GG_2/G_{24},\Sigma^{48}(E_2)_\ast ).
}
$$
The vertical maps are the isomorphisms of (\ref{e-hom-e2}) and the lower map is the
isomorphism induced by algebraic periodicity. It follows that the upper map is 
an isomorphism, as needed.
\end{proof}

\begin{rem}\label{uniqueness-of-P} The element $P$ is not uniquely
determined by the requirement that $E_2^{hG_{24}} \wedge P
\simeq E_2^{hG_{24}}$. However, we show in \cite{bcdual} that
$P$ is the unique element in $\kappa_2$ so that
\begin{enumerate}

\item $E_2^{hG_{24}} \wedge P \simeq \Sigma^{48} E_2^{hG_{24}}$, and

\item $P \wedge V(1) \simeq \Sigma^{48}L_{K(2)}V(1)$, where $V(1)$ is the
cofiber of the Adams map on the mod-$3$ Moore spectrum.
\end{enumerate}
\end{rem}

\subsection{The truly exotic elements}

Recall that $\kappa^0_2$ is the subgroup of $\kappa_2$ consisting
of the exotic elements in $Z\in \Pic(\cK_2)$ so that there is
an equivalence of $E_2^{hG_{24}}$-modules
$$
E_2^{hG_{24}} \wedge Z \simeq E_2^{hG_{24}}\ .
$$
We must now show that $\kappa_2^0 \cong H^1(G_{24},\Et_4)$.

We prove the injectivity statement of Theorem \ref{tau-iso}.

\begin{prop}\label{tau-inj} The homomorphism
$$
\tau^0:\kappa_2^0\to H^1(G_{24},(E_2)_4) \subseteq H^5(\GG_2,\Et_4)
$$
is injective.
\end{prop}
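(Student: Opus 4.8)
The plan is to reduce the injectivity of $\tau^0$ entirely to Corollary \ref{d5saysit}, which already packages all the hard analytic input. First I would observe that $\tau^0$ is nothing but the restriction of the homomorphism $\tau$ of Construction \ref{tau-def} to the subgroup $\kappa_2^0 \subseteq \kappa_2$: for $Z \in \kappa_2^0$ the image of $\tau(Z)$ under the restriction map $H^5(\GG_2,\Et_4) \to H^5(G_{24},\Et_4)$ is $\tau^1(E_2^{hG_{24}}\wedge Z)$, which vanishes because $E_2^{hG_{24}}\wedge Z \simeq E_2^{hG_{24}}$ by the very definition of $\kappa_2^0$. Hence $\tau(Z)$ lands in the subgroup $H^1(G_{24},\Et_4)$ cut out by the splittable short exact sequence (\ref{ss1}) of Proposition \ref{ANSS-dets}, and on $\kappa_2^0$ we have $\tau(Z) = 0$ in $H^5(\GG_2,\Et_4)$ if and only if $\tau^0(Z) = 0$ in $H^1(G_{24},\Et_4)$.

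Next I would unwind the definition of $\tau$. Choosing an isomorphism $f:\Et_\ast \to \Et_\ast Z$ and setting $\iota_Z = f_\ast(\iota)$, Construction \ref{tau-def} gives $\tau(Z) = f_\ast^{-1} d_5(\iota_Z)$, where $d_5$ is the Adams--Novikov differential in the spectral sequence for $Z$. Since $f_\ast$ is an isomorphism, $\tau(Z) = 0$ is equivalent to $d_5(\iota_Z) = 0$. Combining this with the previous paragraph, an element $Z \in \kappa_2^0$ lies in the kernel of $\tau^0$ precisely when $d_5(\iota_Z) = 0$.

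Finally I would invoke Corollary \ref{d5saysit}: if $d_5(\iota_Z) = 0$ then $\iota_Z$ is a permanent cycle and $L_{K(2)}S^0 \simeq Z$, so $Z$ is the identity element of $\kappa_2^0$. This shows $\ker \tau^0 = 0$ and completes the proof. The genuine content --- and the step I expect to be the real obstacle --- is entirely hidden inside Corollary \ref{d5saysit}, whose proof rests on the horizontal vanishing line of Theorem \ref{van-line} (forcing $d_5$ and $d_9$ to be the only possible differentials on $\iota_Z$) together with the input computation $H^9(\GG_2,\Et_8) = 0$ extracted from the spectral sequence of Remark \ref{ext-to-big}. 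Once $d_5(\iota_Z)$ is known to vanish, these two facts guarantee that $d_9(\iota_Z)$ vanishes as well and that no higher differential can obstruct splitting $\iota_Z$ off as a trivial summand; it is this package, rather than any new argument, that drives the injectivity.
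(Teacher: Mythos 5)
Your proposal is correct and is essentially the paper's own argument: the paper's proof is the one-line statement that the proposition is a restatement of Proposition \ref{ANSS-dets} (which supplies the split exact sequence identifying $H^1(G_{24},(E_2)_4)$ inside $H^5(\GG_2,(E_2)_4)$) together with Corollary \ref{d5saysit} (which says $d_5(\iota_Z)=0$ forces $Z\simeq L_{K(2)}S^0$). You have simply spelled out the same reduction in full detail, correctly locating the real content in the vanishing line and the computation $H^9(\GG_2,(E_2)_8)=0$.
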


\begin{proof} This is a restatement of Proposition \ref{ANSS-dets} and Corollary
\ref{d5saysit}.
\end{proof}

It remains to show that $\tau:\kappa_2\to H^5(\GG_2,(E_2)_4)$
is surjective. We will prove this by constructing explicit elements
in $\kappa_2$.

\def\egone{{{E_2^{h\GG^1_2}}}}

To prepare for the argument, consider the tower realizing the
resolution of Equation (\ref{e-res-simp})
\begin{equation}\label{key-tower-1}
\xymatrix@R=15pt{
\egone \rto & X_2 \rto & X_1\rto  &E^{hG_{24}}=F^1_0\ .\\
\Sigma^{-3}F^1_3\ar[u]&
\Sigma^{-2}F^1_2\ar[u] & \Sigma^{-1}F^1_1\ar[u]
}
\end{equation}

\begin{lem}\label{egone} Let $Z \in \kappa_2^0$.
Then there is  a weak equivalence of $\egone$-modules
\begin{equation*}
\egone \simeq \egone \wedge Z.
\end{equation*}
\end{lem}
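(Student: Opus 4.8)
The plan is to produce the asserted equivalence as an $\egone$-module map corresponding to a suitable class in $\pi_0(\egone\wedge Z)$, and to show that such a class exists by a descent spectral sequence argument. Since $Z$ is dualizable in $\cK_2$ we have $\egone\wedge Z\simeq (E_2\wedge Z)^{h\GG_2^1}$, so there is a spectral sequence
$$
H^s(\GG_2^1,\Et_t Z)\Longrightarrow \pi_{t-s}(\egone\wedge Z),
$$
whose $E_2$-term is abstractly $H^s(\GG_2^1,\Et_t)$ because $Z\in\kappa_2$ forces $\Et_\ast Z\cong\Et_\ast$ as twisted $\GG_2$-modules. A map $\egone\to\egone\wedge Z$ of $\egone$-modules is the same datum as a class in $\pi_0(\egone\wedge Z)$, and it will be an equivalence as soon as its $\Et_\ast$-Hurewicz image generates the free rank-one $\Et_\ast\egone$-module $\Et_\ast(\egone\wedge Z)$. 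Thus it suffices to show that the unit $\iota_Z\in H^0(\GG_2^1,\Et_0Z)\cong\ZZ_3$ is a permanent cycle: the resulting class $f$ lies in filtration $0$, so its Hurewicz image is $\iota_Z$, a generator, and an $\Et_\ast$-isomorphism between $K(2)$-local modules is an equivalence.

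First I would rule out a $d_5$ on $\iota_Z$. By sparseness the only possible differentials on a class in bidegree $(0,0)$ are the $d_r$ with $r\equiv 1\pmod 4$, and the $d_5$ target is $H^5(\GG_2^1,\Et_4)$. The key point is to identify the restriction $H^5(\GG_2^1,\Et_4)\to H^5(G_{24},\Et_4)$ as an isomorphism using the spectral sequence of Corollary \ref{SS}: in total degree $5$ the only possible contributions are $E_\infty^{5,0}$, a subquotient of $H^5(G_{24},\Et_4)$, and $E_\infty^{2,3}$, a subquotient of $H^2(G_{24},\Et_4)$; the latter vanishes by the degree count in Theorem \ref{coh-G24}, and since $E_1^{s,1}=E_1^{s,2}=0$ for $s>0$ and $H^2(G_{24},\Et_4)=H^3(G_{24},\Et_4)=0$, a direct inspection shows $E^{5,0}$ neither supports nor receives a differential. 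Hence the edge map, which is restriction, is an isomorphism $\ZZ/3\xrightarrow{\cong}\ZZ/3$. Naturality of the descent spectral sequence along $G_{24}\subseteq\GG_2^1$ then gives that the restriction of $d_5(\iota_Z)$ equals the $d_5(\iota_Z)$ computed for $E_2^{hG_{24}}\wedge Z$, and this vanishes precisely because $Z\in\kappa_2^0$ forces $k=0$ in Lemma \ref{lem-inj-1}(1). Injectivity of restriction then yields $d_5(\iota_Z)=0$; this is the step where the hypothesis $Z\in\kappa_2^0$ enters.

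For $d_9$ the target $H^9(\GG_2^1,\Et_8)$ vanishes outright: the only possible contributions in Corollary \ref{SS} are subquotients of $H^9(G_{24},\Et_8)$ and of $H^6(G_{24},\Et_8)$, both of which are zero by the degree count in Theorem \ref{coh-G24}. The remaining differentials $d_r$, $r\ge 13$, are killed by a horizontal vanishing line: the $\beta_1$-localization argument of Section 4 (Remarks \ref{inv-sphere} and \ref{inv-gen}) applies to the tower (\ref{key-tower-1}) and its smash with $Z$, where the associated graded of $\beta^{-1}E_2^{\ast,\ast}(\egone\wedge Z)$ is $\FF_3[\Delta^{\pm1},\beta^{\pm1}]\otimes\Lambda(\alpha,e)$ and the differentials $d_5(\Delta)$, $d_5(e)$ and $d_9(\alpha\Delta^2)$ force $\beta^{-1}E_{10}=0$, giving $E_{10}^{s,t}(\egone\wedge Z)=0$ for $s$ large. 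Since $\iota_Z$ survives to $E_{10}$ by the previous two steps, it is a permanent cycle.

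Assembling these steps produces the module map $f:\egone\to\egone\wedge Z$, and the first paragraph shows it is an equivalence. I expect the main obstacle to be the $d_5$ computation, and in particular the identification of the restriction $H^5(\GG_2^1,\Et_4)\to H^5(G_{24},\Et_4)$ as an isomorphism: this is exactly what allows the single hypothesis $Z\in\kappa_2^0$ — a statement about $E_2^{hG_{24}}$ — to propagate up to the larger group $\GG_2^1$. A structurally different route would compare the towers for $\egone$ and $\egone\wedge Z$ directly, using the layer equivalences $F_i^1\wedge Z\simeq F_i^1$ (which hold by $Z\in\kappa_2^0$ together with Lemma \ref{lem-inj-1}(2)) and checking that the attaching maps agree up to these identifications; this merely repackages the same obstruction groups.
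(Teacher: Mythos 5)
Your proof is correct in substance, but it takes a genuinely different route from the paper's. The paper disposes of Lemma \ref{egone} in one sentence by obstruction theory along the finite tower (\ref{key-tower-1}): since $Z\in\kappa_2^0$, Lemma \ref{lem-inj-1} identifies each layer $F^1_q\wedge Z$ with $F^1_q$, and the class $\iota_Z\in\pi_0(E_2^{hG_{24}}\wedge Z)$ lifts up the three remaining stages because the obstruction in $\pi_0(\Sigma^8E_2^{hSD_{16}}\wedge Z)$ is detected by the (injective) $E_2$-Hurewicz map and vanishes algebraically by exactness of (\ref{e-res-simp}), while the groups $\pi_1(\Sigma^{40}E_2^{hSD_{16}})$ and $\pi_2(\Sigma^{48}E_2^{hG_{24}})\cong\pi_{26}E_2^{hG_{24}}$ are zero. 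You instead run the descent spectral sequence for $\GG_2^1$ and show $\iota_Z$ is a permanent cycle there. Your two explicit computations are right and are the correct way to feed in the hypothesis $Z\in\kappa_2^0$: the restriction $H^5(\GG_2^1,\Et_4)\to H^5(G_{24},\Et_4)$ is indeed an isomorphism by Corollary \ref{SS} (this is implicit in Remark \ref{expl-gen}, where the second generator of $H^5(\GG_2,\Et_4)$ spans the kernel of restriction to $\GG_2^1$), and $H^9(\GG_2^1,\Et_8)=0$ by the degree count in Theorem \ref{coh-G24}. The cost of your route is the tail: the descent spectral sequence has no vanishing line at $E_2$, and the targets of $d_r(\iota_Z)$ for $r\geq 13$ are not all zero (for instance $H^{13}(\GG_2^1,\Et_{12})$ receives a contribution from $H^{10}(G_{24},\Et_{12})\ni\beta^5\Delta^{-2}$), so you genuinely need a $\GG_2^1$-analogue of Theorem \ref{van-line}, i.e.\ a rerun of the $\beta$-localization argument of Section 4 for the shorter resolution. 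That adaptation is routine but is real work which the tower argument sidesteps, since a four-column resolution produces only finitely many obstructions. In exchange, your version makes explicit exactly where $d_5$ and the restriction to $G_{24}$ carry the hypothesis, which the paper's one-liner leaves to the reader.
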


\begin{proof} There is no obstruction to lifting the map $S^0 \to E_2^{hG_{24}} \wedge Z$
detected by $\iota_Z$ up the tower obtained from (\ref{key-tower-1}) by applying $(-) \wedge Z$.
\end{proof}

There is a short fiber sequence
\begin{equation}\label{s-fiber-2}
\xymatrix{
L_{K(2)}S^0 \rto & \egone \rto^{\psi^4-1} & \egone
}
\end{equation}
where $\psi^4=\psi^{p+1}$ is a generator for the central $\Z_3\cong
\Z_3^{\times}/\{\pm 1\}$ acting on $\egone$. We apply
$(-)\wedge Z$ and the equivalence of Lemma \ref{egone} to obtain
a fiber sequence
$$
\xymatrix{
Z \rto & \egone \rto^{f_Z} & \egone.
}
$$
Let $\iota \in \pi_0\egone$ be the unit element.
Then $Z \simeq L_{K(2)}S^0$ if and only if 
$$
(f_Z)_\ast(\iota) = 0.
$$
An analysis
of the homotopy groups of $\egone$ using the tower (\ref{key-tower-1})
shows there is an exact sequence
$$
0 \to \ZZ/3 = \pi_{-45}E_2^{hG_{24}} \to \pi_0\egone
\to \pi_0E_2^{hG_{24}}
$$
and the $E_2$-Hurewicz homomorphism shows that $(f_Z)_\ast (\iota)$
must land in $\pi_{-45}E_2^{hG_{24}}$. Thus, to construct the non-trivial
exotic $Z$ needed to show $\tau^0$ is a surjection,  we need only produce enough self-maps
for $\egone$ to  realize this obstruction. This we do in the proof of the following result.

\begin{thm}\label{tau-sur-1} Let $\kappa^0_2$ be the subgroup
of $\kappa_2$ consisting of those elements with 
$E_2^{hG_{24}}\wedge Z \simeq E_2^{hG_{24}}$.
The homomorphism
\begin{align*}
\tau^0: \kappa_2^0 &\longr  H^1(G_{24},(E_2)_4) \subseteq H^5(\GG_2,(E_2)_4)
\end{align*}
is surjective. 
\end{thm}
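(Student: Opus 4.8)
The plan is to produce a single nonzero element of $\kappa_2^0$. Since $\tau^0$ is injective (Proposition \ref{tau-inj}) and its target $H^1(G_{24},\Et_4)\cong\ZZ/3$ has order $3$, it suffices to exhibit one $Z\in\kappa_2^0$ with $Z\not\simeq L_{K(2)}S^0$; by Corollary \ref{d5saysit} this is equivalent to arranging $(f_Z)_\ast(\iota)\ne 0$ in the group $\pi_{-45}E_2^{hG_{24}}\cong\ZZ/3$ identified above. Following the pattern of Example \ref{back-to-one} and Theorem \ref{exotic-eo}, but now twisting in the central direction rather than the $G_{24}$-periodicity direction, I would build $Z$ by perturbing the defining map $\psi^4-1$ of the fiber sequence (\ref{s-fiber-2}).

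Concretely, using the exact sequence coming from the tower (\ref{key-tower-1}), choose a class $w\in\pi_0\egone$ generating the subgroup $\pi_{-45}E_2^{hG_{24}}\cong\ZZ/3$; by its very position in that exact sequence $w$ lies in the kernel of the $\Et_\ast$-Hurewicz map. Let $\mu_w\colon\egone\to\egone$ be the $\egone$-module self-map given by multiplication by $w$, and set
$$
g=(\psi^4-1)+\mu_w,\qquad Z=\mathrm{fib}(g).
$$
Because $\mu_w$ induces multiplication by the Hurewicz image of $w$, which is $0$, the maps $g$ and $\psi^4-1$ agree on $\Et_\ast$; the long exact sequence then gives $\Et_\ast Z\cong\Et_\ast S^0$ as twisted $\GG_2$-modules, so $Z$ is invertible and exotic by Theorem \ref{pic-car}.

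Next I would check $Z\in\kappa_2^0$, i.e.\ $E_2^{hG_{24}}\wedge Z\simeq E_2^{hG_{24}}$. Smashing $g$ with $E_2^{hG_{24}}$ and using (\ref{e-hom-e2}) one has $E_2^{hG_{24}}\wedge\egone\simeq\map(\GG_2/\GG_2^1,E_2^{hG_{24}})\simeq\map(\ZZ_3,E_2^{hG_{24}})$, whose $\pi_0$ is $\map(\ZZ_3,\pi_0E_2^{hG_{24}})$. Since $\pi_0E_2^{hG_{24}}$ is torsion free (the kernel of its Hurewicz map is concentrated in positive degrees, by Theorem \ref{hom-G24}), the order-$3$ class $w$ maps to $0$ here, so $E_2^{hG_{24}}\wedge\mu_w\simeq 0$. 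Hence $E_2^{hG_{24}}\wedge Z\simeq\mathrm{fib}(E_2^{hG_{24}}\wedge(\psi^4-1))\simeq E_2^{hG_{24}}\wedge L_{K(2)}S^0\simeq E_2^{hG_{24}}$, as required; in particular Lemma \ref{egone} now applies to $Z$.

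Finally, nontriviality, which is the crux. By construction $g_\ast(\iota)=(\psi^4-1)_\ast(\iota)+w\cdot\iota=w\ne0$, since $\psi^4$ is a ring map and fixes the unit. The remaining and hardest step is to identify the intrinsic invariant $(f_Z)_\ast(\iota)$ --- defined through the non-canonical equivalence $\egone\simeq\egone\wedge Z$ of Lemma \ref{egone} applied to the original sequence (\ref{s-fiber-2}) --- with the manifest value $g_\ast(\iota)=w$. The key structural point is that $\mu_w$ is an $\egone$-module map, hence compatible with smashing and with transport along a module equivalence; tracking $\iota$ through this identification should yield $(f_Z)_\ast(\iota)=\pm w\ne0$, equivalently $d_5(\iota_Z)\ne0$ and so $\tau^0(Z)\ne0$ by Proposition \ref{ANSS-dets}. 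I expect this matching of the \emph{ad hoc} fiber $Z=\mathrm{fib}(g)$ with the invariant governed by the top of the tower to be the main obstacle, since $f_Z$ and $g$ are a priori distinct self-maps and one must show the module perturbation $\mu_w$ genuinely shifts the obstruction class rather than being absorbed into the ambiguity of the equivalence. Granting this, $Z$ is a nonzero element of $\kappa_2^0$, whence $\tau^0$ is surjective.
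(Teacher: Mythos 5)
Your construction coincides with the paper's: your self-map $\mu_w$ is a $3$-torsion element of $[E_2^{h\GG_2^1},E_2^{h\GG_2^1}]$ with $\mu_w(\iota)=w\neq 0$, hence by the short exact sequence (\ref{pre-T}) it lies in the torsion subgroup $\pi_3(\Sigma^{48}E_2^{hG_{24}}[[\GG_2/\GG_2^1]])\cong\ZZ/3[[T]]$ and is one of the classes $b$ with $b\not\equiv 0$ modulo $T$; your $Z=\mathrm{fib}((\psi^4-1)+\mu_w)$ is exactly the paper's $Q_b$ of (\ref{defnQb}). Your verifications that $\Et_\ast Z\cong\Et_\ast$ and that $E_2^{hG_{24}}\wedge Z\simeq E_2^{hG_{24}}$ are sound (the latter is in fact spelled out more explicitly than in the paper, which simply asserts $Q_b\in\kappa_2^0$).

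The step you flag is genuinely missing, so the proof as written is incomplete; but you have also framed the obstacle as harder than it needs to be. You do not need to identify the intrinsic map $f_Z$ coming from (\ref{s-fiber-2}) and Lemma \ref{egone} with your map $g$. It suffices to show directly that no map $S^0\to Z$ induces an isomorphism on $\Et_\ast$; then $Z\not\simeq L_{K(2)}S^0$, and $\tau^0(Z)\neq 0$ follows from Corollary \ref{d5saysit} together with Proposition \ref{tau-inj}. Any such map would compose with the fiber inclusion $Z\to E_2^{h\GG_2^1}$ to give a class in $\pi_0E_2^{h\GG_2^1}$ that is killed by $g_\ast$ and detected in filtration zero by a generator of $H^0(\GG_2,\Et_0E_2^{h\GG_2^1})\cong\ZZ_3$; every such class has the form $c\iota+t$ with $c\in\ZZ_3^\times$ and $t$ in the torsion subgroup $\ZZ/3\subseteq\pi_0E_2^{h\GG_2^1}$. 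So the real content is to control $g_\ast$ on all of $\pi_0E_2^{h\GG_2^1}$, not just on $\iota$, and this is precisely what the paper's computation of $[E_2^{h\GG_2^1},E_2^{h\GG_2^1}]$ and the diagram (\ref{all-T}) supply: evaluation at the unit is reduction modulo $T$, so $T=\psi^4-1$ acts trivially on $\pi_0E_2^{h\GG_2^1}$, and products of classes from the torsion subgroup vanish for filtration reasons in the length-three tower (\ref{key-tower-1}). Hence $g_\ast(c\iota+t)=cw+(\psi^4-1)_\ast(t)+wt=cw\neq 0$, no unit map exists, and $\tau^0$ is surjective. In short, your element is the right one; the missing ingredient is the endomorphism-ring computation that rules out \emph{every} filtration-zero lift of the unit, not merely $\iota$ itself.
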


\begin{proof} We begin with a calculation 
of $[E_2^{h\GG_2^1},E_2^{h\GG_2^1}]$.
Apply the functor $[E_2^{h\GG_2^1},-]$ to the tower
(\ref{key-tower-1}), then use Proposition 2.6 of \cite{GHMR} and
the calculations of Theorem \ref{hom-G24} to get an exact sequence
\begin{align*}
0\to  \pi_3(\Sigma^{48}E_2^{hG_{24}}[[\GG_2/\GG_2^1]])
\to &[E_2^{h\GG_2^1},E_2^{h\GG_2^1}]\\
&\to \pi_0(E_2^{hG_{24}})[[\GG_2/\GG_2^1]]\to
\pi_0(\Sigma^{8}E_2^{hSD_{16}})[[\GG_2/\GG_2^1]].
\end{align*}
The last map is induced by $d_1;E_2^{hG_{24}} \to \Sigma^8E_2^{hSD_{16}}$
at the beginning of the topological resolution realizing (\ref{e-res-simp}).
In Theorem 4.3 of \cite{GHM} we proved that the kernel of 
$(d_1)_\ast:\pi_0 E_2^{hG_{24}} \to \pi_0\Sigma^8E_2^{hSD_{16}}$
is $\ZZ_3$ generated by the multiplicative identity. Thus we have a
short exact sequence
\begin{equation}\label{pre-T}
0\to  \pi_3(\Sigma^{48}E_2^{hG_{24}}[[\GG_2/\GG_2^1]])
\to [E_2^{h\GG_2^1},E_2^{h\GG_2^1}]
\to \ZZ_3[[\GG_2/\GG_2^1]]\to 0\ .
\end{equation}

Since $\GG_2/\GG_2^1 \cong \ZZ_3$, there is an isomorphism of
complete local rings
$$
\ZZ_3[[T]] \mathop{\longr}^{\cong} \ZZ_3[[\GG_2/\GG_2^1]]
$$
sending $T$ to $\psi-1$. Again from Theorem \ref{hom-G24} we then
have
$$
\pi_3(\Sigma^{48}E_2^{hG_{24}}[[\GG_2/\GG_2^1]])\cong \Z/3[[T]],
$$
We also have
\begin{equation}\label{post-T}
0\to \Z/3\cong \pi_3(\Sigma^{48}E_2^{hG_{24}})\to
\pi_0(E_2^{h\GG_2^1}) \to  \ZZ_3\to 0  \ .
\end{equation}
The final $\ZZ_3$ is the kernel of the map $(d_1)_\ast:\pi_0 E_2^{hG_{24}} \to 
\pi_0\Sigma^8E_2^{hSD_{16}}$, as above.
We can obtain the sequence (\ref{post-T}) by direct calculation
or from Equation \ref{pre-T} by killing the action of $T$ with
the fibration sequence of Equation (\ref{s-fiber-2}). Combining
the exact sequences of Equations \ref{pre-T} and \ref{post-T} we
obtain a commutative diagram with the vertical maps induced
by the unit map $S^0 \to E_2^{h\GG^1_2}$.
\begin{equation}\label{all-T}
\xymatrix{
0 \rto& \ZZ/3[[T]] \rto \dto_{0=T} &  [E_2^{h\GG_2^1},E_2^{h\GG_2^1}] \rto\dto
& \Z_3[[T]] \rto \dto^{T=0} & 0\\
0 \rto& \ZZ/3\rto  &  \pi_0E_2^{h\GG_2^1} \rto
&\ZZ_3 \rto  & 0\ .
}
\end{equation}

Let $b \in \pi_3(\Sigma^{48}E_2^{hG_{24}}[[\GG_2/\GG_2^1]])
\cong \Z/3[[T]]$ and define spectra $Q_b$ by the fibration
sequences
\begin{equation}\label{defnQb}
\xymatrix{
Q_b \rto & E_2^{h\GG_2^1} \rto^{T + b} & E_2^{h\GG_2^1}.
}
\end{equation}
Recall $T = \psi-1$.
Since $\Et_\ast b = 0$, we have $\Et_\ast Q_b \cong \Et_\ast$
as twisted $\GG_2$-modules; hence $Q_b \in \kappa_2^0$.
Furthermore, from the diagram of Equation \ref{all-T}, we see that
the unit element $\pi_0E_2^{h\GG_2^1}$ factors through $Q_b$
if and only if $b$ is divisible by $T$. In particular if
$b \not\equiv 0$ modulo $T$, then 
$$
\tau(Q_b) \ne 0 \in \pi_{-45}E_2^{hG_{24}}.
$$
\end{proof}

\begin{rem}\label{constructable} We  constructed all the elements of
$\kappa_2$ in Theorem \ref{exotic-eo} and Theorem \ref{tau-sur-1}.
However, these constructions write the exotic elements as
homotopy inverse limits of diagrams built from the infinite spectra
$E_2^{hG}$, with $G$ finite.  What we have not done is  construct finite
CW spectra whose $K(n)$-localizations realize these elements.
This is in contrast to the case $n=1$ and $p=2$, where the exotic
element is the localization of an explicit three-cell complex.
\end{rem}

\section{The computation of $Pic(\cL_2)$} 

We begin with some generalities. Recall that $\Laa_n$ denotes the
category of  $L_n$-local spectra and $\Ka_n$ the category of
$K(n)$-local spectra.

\begin{lem}\label{hom-on-pic} Localization defines a homomorphism
$$
L_{K(n)}: \Pic(\cL_n) \longr \Pic_n
$$
which restricts to a homomorphism $\kappa(\cL_n) \to \kappa_n$.
\end{lem}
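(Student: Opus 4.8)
The plan is to show that $L_{K(n)}$ is a strong symmetric monoidal functor from $(\cL_n,\wedge_{E(n)},L_nS^0)$ to $(\cK_n,\wedge_{K(n)},L_{K(n)}S^0)$. Any such functor carries invertible objects to invertible objects and respects the tensor product, hence induces a homomorphism on Picard groups; the restriction to exotic elements will then follow by tracking the homology functor $(E_n)_\ast(-)$, which is insensitive to $K(n)$-localization. So the only real content is monoidality together with the behaviour on algebraic classes.

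First I would record the two elementary facts underlying monoidality. Since the Bousfield class of $K(n)$ is dominated by that of $E(n)$, every $E(n)$-acyclic spectrum is $K(n)$-acyclic; consequently every $E(n)_\ast$-equivalence is a $K(n)_\ast$-equivalence and $L_{K(n)}\simeq L_{K(n)}\circ L_n$. In particular $L_{K(n)}(L_nS^0)\simeq L_{K(n)}S^0$, so the unit of $\cL_n$ is carried to the unit of $\cK_n$. Second, because $K(n)_\ast$ is a graded field, smashing a $K(n)_\ast$-equivalence with an arbitrary spectrum is again a $K(n)_\ast$-equivalence; applying this to the localization maps $X\to L_{K(n)}X$ and $Y\to L_{K(n)}Y$ shows that the natural map
$$
L_{K(n)}(X\wedge Y)\longrightarrow L_{K(n)}(L_{K(n)}X\wedge L_{K(n)}Y)=L_{K(n)}X\wedge_{K(n)}L_{K(n)}Y
$$
is an equivalence, using the definition of $\wedge_{K(n)}$ from Remark \ref{E-smash}. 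Combining the two facts, for $X,Y\in\cL_n$ there are natural equivalences
$$
L_{K(n)}(X\wedge_{E(n)}Y)=L_{K(n)}L_n(X\wedge Y)\simeq L_{K(n)}(X\wedge Y)\simeq L_{K(n)}X\wedge_{K(n)}L_{K(n)}Y.
$$
The checks of associativity, symmetry and unitality for these structure maps are formal consequences of the naturality of localization, so $L_{K(n)}$ is strong symmetric monoidal. Hence if $X\wedge_{E(n)}Y\simeq L_nS^0$ then $L_{K(n)}X\wedge_{K(n)}L_{K(n)}Y\simeq L_{K(n)}S^0$, and $X\mapsto L_{K(n)}X$ defines the asserted homomorphism $\Pic(\cL_n)\to\Pic_n$.

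For the restriction to exotic elements, let $X\in\kappa(\cL_n)$, so $X$ is invertible in $\cL_n$ with trivial image in $\Pic(E(n))_{\alg}$. Since $X\to L_{K(n)}X$ is a $K(n)_\ast$-equivalence and $E_n\wedge(-)$ preserves $K(n)_\ast$-equivalences, the definition $(E_n)_\ast(-)=\pi_\ast L_{K(n)}(E_n\wedge -)$ yields a natural isomorphism of twisted $\GG_n$-modules
$$
(E_n)_\ast L_{K(n)}X\cong (E_n)_\ast X.
$$
It therefore suffices to see that triviality of $X$ in $\Pic(E(n))_{\alg}$ forces $(E_n)_\ast X\cong (E_n)_\ast S^0$. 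This is the compatibility of the two algebraic pictures under base change along $E(n)\to E_n$: an invariant generator of the comodule $E(n)_\ast X$ over $E(n)_\ast$ maps to an invariant generator of $(E_n)_\ast X$ over $(E_n)_\ast$, and by Theorem \ref{pic-car} the latter module is free of rank one, so an invariant generator identifies it with $(E_n)_\ast S^0$. Thus $L_{K(n)}X\in\kappa_n$ by Definition \ref{exotic-pc}.

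The main obstacle is not the monoidal formalism, which is routine, but the last step: making precise the passage from the $E(n)_\ast E(n)$-comodule $E(n)_\ast X$ to the twisted $\GG_n$-module $(E_n)_\ast X$ and checking that it preserves trivial objects. I would handle this by exhibiting the base-change homomorphism $\Pic(E(n))_{\alg}\to(\Pic_n)_{\alg}$ induced by $E(n)\to E_n$ and verifying that the square relating $\varepsilon$ on $\cL_n$ to $\varepsilon_n$ on $\cK_n$ commutes. Since that homomorphism sends the class of the unit to the class of the unit, the kernel of the top map lands in the kernel of the bottom map, which is exactly the claim that $L_{K(n)}$ restricts to $\kappa(\cL_n)\to\kappa_n$.
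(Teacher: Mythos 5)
Your proof is correct and takes essentially the same route as the paper's: the entire content is the natural equivalence $L_{K(n)}(X\wedge Y)\simeq L_{K(n)}(L_{K(n)}X\wedge L_{K(n)}Y)$, which is exactly the paper's one-line argument for monoidality of $L_{K(n)}$ on $\cL_n$. The paper leaves the restriction to the exotic subgroups implicit, and your base-change observation --- that a primitive generator of the comodule $E(n)_\ast X$ induces a $\GG_n$-invariant generator of the rank-one free module $(E_n)_\ast X$ --- correctly supplies that routine step.
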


\begin{proof} If $X$ and $Y$ are spectra, then the natural map
$X \wedge Y \to L_{K(n)}X \wedge L_{K(n)}Y$ induces an
equivalence
$$
L_{K(n)}(X \wedge Y) \to  L_{K(n)}(L_{K(n)}X \wedge L_{K(n)}Y).
$$
The result follows from the definition of the Picard group.
\end{proof}

A first calculation is due to Hovey and Sadofsky \cite{HovSad}:

\begin{prop}\label{pic-l-n} Let $X \in \Pic(\Laa_n)$. Then there is an integer 
$k$ so that
$E(n)_\ast X \cong E(n)_\ast S^k$ as $E(n)_\ast E(n)_\ast$-comodules. 
Furthermore there is a short exact sequence
$$
\xymatrix{
0\longr \kappa(\Laa_n)\rto & Pic(\Laa_n) \rto^-{\mathrm{dim}} &\ZZ \longr 0
}
$$ 
where $\mathrm{dim}(X) = k$.
\end{prop}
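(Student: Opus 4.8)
The plan is to reduce the proposition to the computation of the Picard group of invertible $E(n)_\ast E(n)$-comodules, which is the algebraic heart of Hovey--Sadofsky \cite{HovSad}, and then to package the resulting evaluation homomorphism into the asserted short exact sequence. Throughout I use that $E(n)$ is Landweber exact, so that $E(n)_\ast E(n)$ is flat over $E(n)_\ast$ and $E(n)_\ast(-)$ is an honest homology theory on $\Laa_n$ (taking wedges to sums), carrying a natural comodule structure; this is in contrast to the completed $(E_n)_\ast(-)$ of Section \ref{E-theory-sec}, and it makes the ordinary K\"unneth isomorphism available.

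First I would set up the homomorphism. Any $X \in \Pic(\Laa_n)$ is dualizable, its dual being its $\wedge$-inverse, so $E(n)\wedge X$ is a dualizable $E(n)$-module and hence $E(n)_\ast X$ is finitely generated projective over $E(n)_\ast$; since $X$ is invertible this module has rank one. As $E(n)_\ast$ is a graded unique factorization domain with trivial module Picard group, $E(n)_\ast X$ is then free of rank one on a homogeneous generator. Flatness now gives the K\"unneth isomorphism $E(n)_\ast(X\wedge Y)\cong E(n)_\ast X\otimes_{E(n)_\ast}E(n)_\ast Y$, and applying it to an inverse $Y$ of $X$, together with $E(n)_\ast(X\wedge_{\Laa_n}Y)=E(n)_\ast L_n S^0=E(n)_\ast$, shows that $E(n)_\ast X$ is an invertible $E(n)_\ast E(n)$-comodule. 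Thus $X\mapsto E(n)_\ast X$ is a homomorphism from $\Pic(\Laa_n)$ to the Picard group of invertible comodules.

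The main obstacle is the purely algebraic identification of that comodule Picard group with $\ZZ$. Writing an invertible comodule as a free rank-one module on a homogeneous generator $x$, the coaction has the form $\psi(x)=t\otimes x$ for a grouplike unit $t\in E(n)_\ast E(n)$, and two such comodules are isomorphic precisely when their grouplikes differ by a coboundary $\eta_R(c)/\eta_L(c)$ with $c\in E(n)_\ast^{\times}$ homogeneous. The content of \cite{HovSad} is that this group of grouplike units modulo coboundaries is exactly infinite cyclic, generated by the class of $E(n)_\ast S^1=\Sigma E(n)_\ast$; this is where the Hopf algebroid structure of $E(n)_\ast E(n)$ is genuinely used, and I would simply quote it. Granting this, every invertible comodule is isomorphic to $E(n)_\ast S^k$ for a unique $k\in\ZZ$, which is the first assertion of the proposition.

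Finally I would assemble the exact sequence. Composing $X\mapsto E(n)_\ast X$ with the identification of the comodule Picard group with $\ZZ$ produces a homomorphism with $\mathrm{dim}(X)=k$. It is surjective because the localized spheres $L_nS^k$ are invertible and satisfy $\mathrm{dim}(L_nS^k)=k$. By definition an element lies in $\kappa(\Laa_n)$ exactly when $E(n)_\ast X\cong E(n)_\ast S^0$, i.e. when $\mathrm{dim}(X)=0$, so $\kappa(\Laa_n)$ is precisely the kernel of $\mathrm{dim}$. This yields
$$
0 \longrightarrow \kappa(\Laa_n) \longrightarrow \Pic(\Laa_n) \overset{\mathrm{dim}}{\longrightarrow} \ZZ \longrightarrow 0,
$$
completing the argument modulo the Hovey--Sadofsky computation quoted above.
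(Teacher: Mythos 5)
The paper offers no proof of this proposition at all: it is stated as a quoted result of Hovey and Sadofsky \cite{HovSad}, so your decision to defer the essential content to that reference is consistent with what the paper does, and your assembly of the short exact sequence from the evaluation homomorphism (surjectivity via $L_nS^k$, kernel equal to $\kappa(\Laa_n)$ by the very definition of exotic elements) is correct and is exactly the formal part of the statement. Two cautions about the scaffolding you supply around the citation. First, the inference ``$E(n)\wedge X$ is a dualizable $E(n)$-module, hence $E(n)_\ast X$ is finitely generated projective'' is not a valid implication in general: dualizable modules over a ring spectrum are retracts of finite cell modules and can have decidedly non-projective homotopy (e.g.\ quotients $E/x$ by a regular element). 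The actual argument in \cite{HovSad} establishes that $E(n)_\ast X$ is free of rank one on a homogeneous generator by showing that $K(i)_\ast X$ is one-dimensional for every $0\leq i\leq n$ and that these generators all lie in the same parity, and this is where the real work sits. Second, and relatedly, what \cite{HovSad} proves is the realizability-constrained statement that $E(n)_\ast X\cong E(n)_\ast S^k$ for $X$ an invertible \emph{spectrum}; it is not simply the assertion that the group of grouplikes modulo coboundaries (equivalently, the full algebraic Picard group of $E(n)_\ast E(n)$-comodules) is infinite cyclic, and you should not attribute that stronger algebraic claim to them without checking it --- the analogous completed algebraic Picard groups in this paper (Theorem \ref{pnot2}, Remark \ref{pic-at-1}) are visibly larger than $\ZZ$, so the claim is at least delicate. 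Neither issue breaks your proof, since the first assertion of the proposition is precisely the theorem you are quoting; but as written your sketch locates the difficulty in the wrong place.
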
   

Now let $n=2$ and $p=3$. We then have the following result of
Kamiya and Shimomora \cite{shiPic}.

\begin{prop}\label{ks-l-2} Let $p=3$. There is a non-trivial element of
order $3$ in $\kappa(\cL_2)$ and $\kappa(\cL_2)$ is contained
in $\ZZ/3 \oplus \ZZ/3$; that is, there are inclusions
$$
\Z/3\subset\kappa(\Laa_2)\subset \Z/3\oplus \Z/3\ .  
$$  
\end{prop}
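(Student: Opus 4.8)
The plan is to relate the $E(2)$-local Picard group to the $K(2)$-local one by means of the localization map $L_{K(2)}:\Pic(\cL_2)\to\Pic_2$ of Lemma \ref{hom-on-pic}, which by that lemma restricts to a map $\kappa(\cL_2)\to\kappa_2$. Since Theorem \ref{main-thm} (or rather its proof, assembled in Theorem \ref{tau-iso} and Proposition \ref{ANSS-dets}) identifies $\kappa_2\cong\ZZ/3\times\ZZ/3$, establishing that this restricted map is injective would immediately give the upper bound $\kappa(\cL_2)\subseteq\ZZ/3\oplus\ZZ/3$. For the lower bound, I would exhibit a single nontrivial element of order $3$; the cleanest route is to use the chromatic fracture/arithmetic fracture square relating $L_2$-localization to $K(2)$- and $K(1)$-localizations, and produce an $E(2)$-local exotic element that localizes to a nonzero class in $\kappa_2$, for instance the $E(2)$-local analogue of the element $P$ constructed in Theorem \ref{exotic-eo}.

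Concretely, first I would verify injectivity of $L_{K(2)}:\kappa(\cL_2)\to\kappa_2$. An element of $\kappa(\cL_2)$ is an $E(2)$-local spectrum $X$ with $E(2)_\ast X\cong E(2)_\ast S^0$ as comodules; by Proposition \ref{pic-l-n} such an $X$ has dimension $0$. The kernel of $L_{K(2)}$ consists of $X$ with $L_{K(2)}X\simeq L_{K(2)}S^0$, and I would argue via the chromatic square
$$
\xymatrix{
L_2 X \rto \dto & L_{K(2)}X \dto\\
L_1 X \rto & L_1 L_{K(2)}X
}
$$
that such an $X$ is determined by its $K(1)$- and $K(2)$-localizations together with the gluing data. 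Since $E(2)_\ast X\cong E(2)_\ast S^0$ forces $L_1X\simeq L_1S^0$ as well, an element in the kernel of $L_{K(2)}$ is controlled by a transchromatic gluing term, and the point is that at $n=2$, $p=3$ this term contributes nothing exotic beyond what is already seen $K(2)$-locally. The lower bound is the easier half: Kamiya and Shimomura already produce a nonzero class, but to keep the argument self-contained I would localize the element $P$ of Theorem \ref{exotic-eo}, or rather construct its $E(2)$-local refinement directly from the topological resolution, and check it is nontrivial by noting its image under $L_{K(2)}$ is the nonzero class with $\tau^1\ne 0$.

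The hard part will be pinning down the kernel of $L_{K(2)}:\kappa(\cL_2)\to\kappa_2$, i.e.\ controlling the exotic $E(2)$-local spectra invisible to $K(2)$-localization. This requires understanding the $K(1)$-local contribution to the fracture square and showing the relevant gluing/correction term vanishes or fails to produce new invertible spectra; the subtlety is that $L_1$ of a $K(2)$-local sphere is not itself trivial, so one must control $\pi_\ast L_1 L_{K(2)}S^0$ and the attaching map in the fracture square carefully. Once injectivity is established, the inclusion $\kappa(\cL_2)\hookrightarrow\kappa_2\cong\ZZ/3\oplus\ZZ/3$ gives the upper bound, and the explicit order-$3$ element gives the lower bound $\ZZ/3\subset\kappa(\cL_2)$, completing the chain of inclusions. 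I expect the comparison of the two fracture-square terms to be where essentially all the real work resides, whereas the existence of at least one nontrivial element is comparatively formal given the machinery already developed.
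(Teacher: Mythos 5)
There is no proof of this proposition in the paper for you to match: Proposition \ref{ks-l-2} is imported verbatim from Kamiya--Shimomura \cite{shiPic}, whose argument runs through the $E(2)$-based Adams spectral sequence (an $E(2)$-local analogue of the map $\tau$ of Construction \ref{tau-def}, with a vanishing line giving injectivity into a group of order dividing $9$). Your fracture-square route is therefore genuinely different, and you should be aware that it inverts the paper's logic: the paper uses this proposition as an \emph{input}, combining the bound $|\kappa(\cL_2)|\leq 9$ with the surjectivity of $L_{K(2)}:\kappa(\cL_2)\to\kappa_2$ to deduce that the map is an isomorphism (Theorem \ref{picla2}). You instead propose to prove injectivity directly and then quote $\kappa_2\cong\ZZ/3\times\ZZ/3$ (Theorem \ref{tau-iso}). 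That is not circular, since Theorem \ref{tau-iso} does not depend on this proposition, but if carried out it would subsume Theorem \ref{picla2} and make the citation of \cite{shiPic} unnecessary --- which should alert you that the step you are leaning on cannot be cheap.

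The genuine gap is exactly the step you flag and then defer: the injectivity of $L_{K(2)}:\kappa(\cL_2)\to\kappa_2$, on which your entire upper bound rests, is asserted (``this term contributes nothing exotic'') rather than proved. Via Proposition \ref{frac-app} and Remark \ref{rewrite-pic}, an element of the kernel is a triple $(L_1S^0,\,L_{K(2)}S^0,\,f)$ with $f\in\pi_0L_1L_{K(2)}S^0$, and triviality of the kernel requires knowing that $\pi_0L_1L_{K(2)}S^0\cong\ZZ_3$, that only generators yield invertible triples, and that the units of $\pi_0L_{K(2)}S^0$ act transitively on those generators. This is precisely the chromatic splitting computation of Theorem \ref{chrom-split-1-redux} (Theorem 5.10 of \cite{GHM}), a substantial theorem and not a formal consequence of the fracture square; as written, your proposal neither proves it nor cites it, so the chain $\kappa(\cL_2)\hookrightarrow\kappa_2\cong\ZZ/3\oplus\ZZ/3$ is not established. (The lower bound half is fine: lifting $P$ of Theorem \ref{exotic-eo} through the fracture square, again using Theorem \ref{chrom-split-1-redux}, produces a nontrivial class, and once the group is known to be a nontrivial finite abelian $3$-group it contains an element of order $3$.) To close the argument you must either invoke Theorem \ref{chrom-split-1-redux} explicitly to control the gluing datum, or fall back on the Adams spectral sequence bound of \cite{shiPic} as the paper does.
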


We can now sharpen this result. With Proposition \ref{pic-l-n} the
next result completes the calculation of $\Pic(\Laa_2)$.

\begin{thm}\label{picla2} Let $p=3$. Then 
$\kappa(\Laa_2)$ maps isomorphically to $\kappa_2$.  
\end{thm}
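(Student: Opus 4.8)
The plan is to run the chromatic fracture square together with the vanishing of exotic elements at height $1$. For any spectrum $Y$ there is a homotopy pullback square expressing $L_2 Y$ as the fibre product of $L_{K(2)}Y$ and $L_1 Y$ over $L_1 L_{K(2)}Y$. By Lemma \ref{hom-on-pic} localization gives a homomorphism $L_{K(2)}\colon\kappa(\cL_2)\to\kappa_2$, and by Proposition \ref{ks-l-2} together with Theorem \ref{main-thm} the source has order $3$ or $9$ while the target $\kappa_2\cong\ZZ/3\times\ZZ/3$ has order $9$. A surjection between finite groups of equal order is an isomorphism, so since $|\kappa(\cL_2)|\le 9$ it suffices to prove that $L_{K(2)}$ is surjective; injectivity will also drop out but is not logically needed.

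First I would record the height-$1$ input. If $X\in\kappa(\cL_2)$ then $E(2)_\ast X\cong E(2)_\ast$ as comodules; base-changing the free rank-one comodule $E(2)_\ast X$ along $E(2)_\ast\to E(1)_\ast$ gives $E(1)_\ast X\cong E(1)_\ast$, so $L_1 X$ lies in $\kappa(\cL_1)$. Since $p=3$ is odd there are no exotic elements at height $1$ (compare Remark \ref{pic-at-1}, where $\kappa_1\ne 0$ only at $p=2$), whence $L_1 X\simeq L_1 S^0$. This already yields injectivity: if in addition $L_{K(2)}X\simeq L_{K(2)}S^0$, then all three corners of the fracture square for $X$ agree with those for $L_2 S^0$, and after checking the gluing data agrees one concludes $X\simeq L_2 S^0$.

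For surjectivity I would construct, for each $Z\in\kappa_2$, a lift $\tilde Z\in\kappa(\cL_2)$ with $L_{K(2)}\tilde Z\simeq Z$ by running the fracture square backwards. The essential point is that the height-$1$ shadow of $Z$ is trivial: because $(E_2)_\ast Z\cong(E_2)_\ast$ as Morava modules, the localization $L_1 L_{K(2)}Z$ depends only on this (trivial) module and is therefore equivalent to $L_1 L_{K(2)}S^0$. Using this identification as gluing data I would form the homotopy pullback of $Z\to L_1 L_{K(2)}Z\xleftarrow{}L_1 S^0$. The resulting spectrum $\tilde Z$ is $E(2)$-local with $L_{K(2)}\tilde Z\simeq Z$ and $L_1\tilde Z\simeq L_1 S^0$, hence $E(2)_\ast\tilde Z\cong E(2)_\ast$ and $\tilde Z\in\kappa(\cL_2)$ maps to $Z$. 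Applying this to the generator $P$ of Theorem \ref{exotic-eo} and the truly exotic generator of Theorem \ref{tau-sur-1} exhibits the surjection onto $\kappa_2\cong\ZZ/3\times\ZZ/3$ and completes the argument.

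The main obstacle is precisely the middle step. Establishing $L_1 L_{K(2)}Z\simeq L_1 L_{K(2)}S^0$ rests on the principle that through height $1$ an invertible $K(2)$-local spectrum is seen only through its Morava module, here trivial; but this requires controlling the interchange of $L_{K(1)}$ with the homotopy fixed points $(E_2\wedge Z)^{h\GG_2}$ and identifying the rational corner, and then choosing the gluing equivalence so that $\tilde Z$ is genuinely invertible and independent of the choices made. This is exactly the chromatic-splitting information at $n=2$, $p=3$: it bounds the room the height-$1$ gluing leaves, and hence guarantees both that the lifts $\tilde Z$ are well defined and that they exhaust $\kappa_2$.
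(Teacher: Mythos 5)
Your overall strategy --- glue $Z\in\kappa_2$ to $L_1S^0$ along the chromatic fracture square and use the Kamiya--Shimomura bound $\kappa(\Laa_2)\subseteq \ZZ/3\oplus\ZZ/3$ to convert surjectivity into an isomorphism --- is exactly the paper's strategy (Proposition \ref{frac-app}, Remark \ref{proof-outline}). But there is a genuine gap at the step you yourself flag as ``the main obstacle,'' and it is not a technical bookkeeping point: the assertion that $L_1L_{K(2)}Z\simeq L_1L_{K(2)}S^0$ because ``$L_1L_{K(2)}Z$ depends only on the (trivial) Morava module'' is not a valid principle. The Morava module does not determine a $K(2)$-local spectrum --- that failure is precisely why $\kappa_2\neq 0$ --- and there is no formal reason its $E(1)$-localization should be so determined either. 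What actually makes this work is Theorem \ref{chrom-split-1-redux} (Theorem 5.10 of \cite{GHM}): the concrete computation that $\pi_0L_1X\cong\ZZ_3$ for $X\in\kappa_2$, that the localized Hurewicz map into $\pi_0L_1L_{K(2)}(E_2\wedge X)$ is injective, and that a chosen trivialization of $(E_2)_\ast X$ singles out a generator extending to an $L_1L_{K(2)}S^0$-module equivalence $L_1L_{K(2)}S^0\simeq L_1X$. This is chromatic-splitting input at $n=2$, $p=3$, and your proof does not supply it; citing it (or proving it) is the irreducible content of the argument.

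A secondary gap: having formed the pullback $\tilde Z$ of $Z\to L_1L_{K(2)}Z\leftarrow L_1S^0$, you conclude $\tilde Z\in\kappa(\Laa_2)$ from $E(2)_\ast\tilde Z\cong E(2)_\ast$. But membership in $\kappa(\Laa_2)$ requires $\tilde Z$ to be \emph{invertible} in $\Laa_2$, and no analogue of Theorem \ref{pic-car} for the $E(2)$-local category is available in the paper (Proposition \ref{pic-l-n} only goes the other way). The paper sidesteps this by working in the fracture category $\LKM$ of Remark \ref{rewrite-pic}: it takes the $K(2)$-local inverse $Y$ of $Z$, chooses the trivialization $\psi$ of $(E_2)_\ast Y$ compatibly with that of $(E_2)_\ast Z$, builds the triple $(L_1S^0,Y,g)$ the same way, and verifies $f\ast g=L_1\eta$, so that invertibility is witnessed explicitly and transported back along the monoidal bijection of Proposition \ref{frac-app}. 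You should either run that construction or justify independently why a spectrum with trivial $E(2)_\ast$-comodule and the two prescribed localizations is invertible.
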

\smallskip

\def\LKM{{\Laa_{n-1}\times_{\Laa_{n-1}\Ka_n}\Ka_n}}

The proof will be supplied below after some preliminaries.

\begin{rem}\label{proof-outline} We will show that for every $X \in 
\kappa_2$  there is an element $Z \in \Pic(\Laa_2)$ so that $L_{K(n)}Z 
\simeq X$. By Theorem \ref{pnot2}, $(E_2)_\ast (-)$ also detects 
dimensions of spheres.  This fact and Proposition \ref{pic-l-n} imply 
that $Z \in  \kappa(\Laa_2)$. 
We can then conclude that the map $\kappa(\cL_2)
\to \kappa_2$  of Lemma \ref{hom-on-pic} is surjective;
with Proposition \ref{ks-l-2}, this will imply Theorem \ref{picla2}.
\end{rem}

We define $\LKM$ to be the category of triples $(X,Y,f)$ with
$X\in \Laa_{n-1}$, $Y\in \Ka_n$, and $f$ a homotopy class of maps
$f:X\to L_{n-1}Y$. A morphism from $(X_1,Y_1,f_1) \to 
(X_2,Y_2,f_2)$ are homotopy classes of morphism
$X_1 \to X_2$ and $Y_1 \to Y_2$ so that 
$$
\xymatrix{
X_1 \rto^{f_1} \dto & Y_1 \dto\\
X_2 \rto_{f_1} & Y_2
}
$$
commutes up to homotopy. We give $\LKM$ an internal smash product
with
$$
(X_1,Y_1,f_1) \wedge (X_2,Y_2,f_2) = 
(X_1 \wedge X_2,L_{K(n)} (Y_1 \wedge Y_2),g)
$$
where $g$ is defined by the following diagram
$$
\xymatrix{
X_1 \wedge X_2 \rto^-{f_1 \wedge f_2}
& L_{n-1}Y_1 \wedge L_{n-1}Y_2\\
& L_{n-1}(Y_1 \wedge Y_2) \rto \ar@{{<}-{>}}[u]_{\simeq} &
L_{n-1}(L_{K(n)} (Y_1 \wedge Y_2)).
}
$$
We will write $f_1 \ast f_2$ for $g$.

If $Z$ is any $L_n$-local spectrum, there is
a {\it chromatic fracture square}:
\begin{equation}\label{chrom-frac}
\xymatrix@C=40pt{
Z \rto^{\eta_Z} \dto & L_{K(n)} Z\dto\\
L_{n-1}Z \rto_-{L_{n-1}\eta_Z} & L_{n-1}L_nZ
}
\end{equation}
where the vertical maps and $\eta_Z$ are the obvious localization maps.
By Theorem 6.19 of \cite{HS} this is a homotopy pull-back square.

\begin{prop}\label{frac-app} The functor
\begin{align*}
\Laa_n&\longr \LKM\\
\\
Z&\longmapsto \big(L_{n-1}Z,L_{K(n)}Z,L_{n-1}(\eta_Z)\big)
\end{align*}
is bijective on isomorphism classes of objects and commutes with
smash products.   
\end{prop}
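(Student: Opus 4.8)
The plan is to exhibit an explicit inverse on isomorphism classes. To a triple $(X,Y,f)$ I would associate the homotopy pullback
$$
P(X,Y,f) = X \times_{L_{n-1}Y} Y,
$$
formed from $f\colon X \to L_{n-1}Y$ and the localization map $Y \to L_{n-1}Y$, and then show that $\Phi\colon Z \mapsto (L_{n-1}Z, L_{K(n)}Z, L_{n-1}\eta_Z)$ and $P$ are mutually inverse. That $P\circ\Phi \simeq \mathrm{id}$ on $\Laa_n$ is exactly the chromatic fracture square (\ref{chrom-frac}), which is a homotopy pull-back by Theorem 6.19 of \cite{HS}: every $L_n$-local $Z$ is recovered as $L_{n-1}Z \times_{L_{n-1}L_{K(n)}Z} L_{K(n)}Z$. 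This already yields injectivity on isomorphism classes, since an isomorphism of triples induces an equivalence of the two defining cospans and hence of their homotopy pullbacks.

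For surjectivity I would verify $\Phi(P(X,Y,f)) \cong (X,Y,f)$. Writing $Z = P(X,Y,f)$, the defining square is bicartesian in spectra, hence preserved by the exact localization functors $L_{n-1}$ and $L_{K(n)}$. Applying $L_{n-1}$ and using that $L_{n-1}X \simeq X$ while the map $Y \to L_{n-1}Y$ becomes an equivalence after $L_{n-1}$, the pullback collapses to give $L_{n-1}Z \simeq X$. Applying $L_{K(n)}$ and invoking the orthogonality $L_{K(n)}L_{n-1}(-) \simeq \ast$ (every $E(n-1)$-local spectrum is $K(n)$-acyclic) kills the entire bottom row, leaving $L_{K(n)}Z \simeq L_{K(n)}Y \simeq Y$. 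A short diagram chase then identifies $L_{n-1}\eta_Z$ with $f$ under these two equivalences, completing the reconstruction.

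For the monoidal statement I would invoke the standard localization identities $L_{n-1}L_n = L_{n-1}$, $L_{K(n)}L_n = L_{K(n)}$, and $L_E(A \wedge B) \simeq L_E(L_EA \wedge L_EB)$, which give
$$
L_{n-1}(Z_1 \wedge Z_2) \simeq L_{n-1}Z_1 \wedge L_{n-1}Z_2, \qquad
L_{K(n)}(Z_1 \wedge Z_2) \simeq L_{K(n)}Z_1 \wedge L_{K(n)}Z_2,
$$
matching the first two coordinates of $\Phi(Z_1)\wedge\Phi(Z_2)$. One then checks that the structure map of the smash product of the two triples is $(L_{n-1}\eta_{Z_1}) \ast (L_{n-1}\eta_{Z_2})$, which by naturality of $\eta$ agrees with $L_{n-1}\eta_{Z_1 \wedge Z_2}$; this is where the definition of $\ast$ via the diagram in the construction of $\LKM$ gets used.

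The main obstacle I expect is the bookkeeping of the structure maps $f$: verifying both that $\Phi$ genuinely lands in $\LKM$ with $f = L_{n-1}\eta_Z$ and that the reconstruction $P$ recovers $f$ on the nose up to the specified equivalences, and that smashing is compatible with $\ast$. A secondary subtlety is that $\LKM$ is defined at the level of homotopy classes, so $P$ is not functorial for homotopy classes of maps; since the proposition only claims a bijection on objects, it suffices to choose any representative homotopy pullback, and the spectrum-level exactness facts (that $L_{n-1}$ and $L_{K(n)}$ preserve bicartesian squares, together with the $K(n)$-acyclicity of $L_{n-1}$-local spectra) are standard and should not cause trouble.
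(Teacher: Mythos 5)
Your proposal is correct and follows essentially the same route as the paper: injectivity on isomorphism classes comes from the chromatic fracture square, and surjectivity comes from forming the homotopy pullback $Z$ of $X \to L_{n-1}Y \leftarrow Y$ and identifying $L_{n-1}Z\simeq X$, $L_{K(n)}Z\simeq Y$ and $f\simeq L_{n-1}\eta_Z$ (the paper phrases this via the $K(n)$-acyclicity of $L_{n-1}$-local spectra and the $E(n-1)_\ast$-invertibility of $Y\to L_{n-1}Y$ rather than by applying the localizations to the whole bicartesian square, but the content is identical). The monoidal statement is likewise dismissed as definitional in the paper.
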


\begin{proof} The chromatic fracture square (\ref{chrom-frac}) 
gives that this functor is one-to-one on isomorphism classes of
objects. To show that it is onto, let $(X,Y,f)$ be an object
in the target and consider the homotopy pull-back square
$$
\xymatrix{
Z \rto^g \dto & Y \dto \\
X \rto_-f & L_{n-1}Y.
}
$$
Another application of Theorem 6.19 of \cite{HS} shows that any $L_{n-1}$-local
spectrum is $K(n)$-acyclic. Thus $g:Z \to Y$ is $K(n)$-localization. 
Since the map $Y \to L_{n-1}Y$ is an $E(n-1)_\ast$ isomorphism,
so is $Z \to X$; hence $f$ is homotopic to $L_{n-1}g$.

That the functor commutes with smash product is a matter
of definitions.
\end{proof}

\begin{rem}\label{rewrite-pic} By Proposition \ref{frac-app}, the
Picard group $\Pic(\cL_n)$ gets identified with the set of isomorphism classes of triples
$(X,Y,f)$ such that
\begin{enumerate}

\item $X \in \Pic(\cL_{n-1})$ with inverse $X_1$;

\item $Y \in \Pic_n$ with inverse $Y_1$; and,

\item there is a map $f_1:X_1 \to L_{n-1}Y_1$ so that
$f\ast f_1 = L_{n-1}\eta: L_{n-1}S^0 \to L_{n-1}L_{K(n)}S^0$. 
\end{enumerate}

We will be interested in the case where $Y=L_1S^0$.
The next result is Theorem 5.10 of \cite{GHM}.
\end{rem}

\begin{thm}\label{chrom-split-1-redux} Let $X \in \kappa_2$. Then the
localized Hurewicz homomorphism
$$
\pi_0L_1X \longr \pi_0L_1L_{K(2)}(E_2 \wedge X)
$$
is injective. Any choice
of isomorphism $f:(E_2)_\ast \to (E_2)_\ast X$ 
of twisted $\GG_2$-modules 
defines a generator
of $\pi_0L_1X \cong \ZZ_3$. This generator determines
a weak equivalence of $L_1L_{K(2)}S^0$-modules
$$
L_1L_{K(2)}S^0 \simeq L_1X.
$$
\end{thm}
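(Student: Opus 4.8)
The plan is to compute $\pi_0 L_1 X$ by feeding the topological resolution into the defining fibre sequence for the central $\ZZ_3$, and then to pin down the generator by comparison with the unit. First I would use that $X\in\kappa_2$, so that $\Et_\ast X\cong\Et_\ast$ as twisted $\GG_2$-modules and hence, after choosing $f$, there is an equivalence of $E_2$-modules $L_{K(2)}(E_2\wedge X)\simeq E_2$. Under this identification the target of the Hurewicz map becomes $\pi_0 L_1 E_2$, and the image of the class detected by $\iota_X=f_\ast(\iota)$ is the unit $1\in (E_2)_0\subseteq \pi_0 L_1 E_2$, an element of infinite order. Consequently, both the injectivity of the Hurewicz map and the claim that $f$ determines a generator will follow at once once I establish that $\pi_0 L_1 X\cong\ZZ_3$ and that this group is generated by the class carrying $\iota_X$.

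To compute $\pi_0 L_1 X$ I would smash the fibre sequence (\ref{s-fiber-2}) with the invertible spectrum $X$, obtaining a $K(2)$-local fibre sequence $X\to \egone\wedge X\xrightarrow{\psi^4-1}\egone\wedge X$, and then apply $L_1$. This exhibits $L_1 X$ as the fibre of $\psi^4-1$ on $L_1(\egone\wedge X)$, so the associated long exact sequence reduces everything to $\pi_\ast L_1(\egone\wedge X)$ together with the action of $\psi^4-1$. I would compute these homotopy groups from the $L_1$-localization of the tower (\ref{key-tower-1}) smashed with $X$; because $\Et_\ast X\cong\Et_\ast$, the spectral-sequence inputs coincide with those for $X=S^0$, and the only homotopy needed is that of $L_1 E_2^{hG_{24}}$ and $L_1 E_2^{hSD_{16}}$, governed by Theorems \ref{coh-G24} and \ref{hom-G24}. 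Note also that after $L_1$ the obstructing $K(2)$-local differentials $d_5,d_9$ (which carry factors of $\beta$) die, so $\iota_X$ becomes a permanent cycle and detects a class $g\in\pi_0 L_1 X$; reading off the long exact sequence for $\psi^4-1$ should isolate $\pi_0 L_1 X\cong\ZZ_3$ generated by $g$.

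Finally, the generator $g$ produced by $f$ gives a map of $L_1 L_{K(2)}S^0$-modules $L_1 L_{K(2)}S^0\to L_1 X$. Since $L_1$ is monoidal and $X$ is invertible in $\cK_2$, $L_1 X$ is an invertible $L_1 L_{K(2)}S^0$-module; to conclude it is an equivalence I would run the identical tower computation for $X=S^0$ to get $\pi_0 L_1 L_{K(2)}S^0\cong\ZZ_3$ generated by the unit, observe that the module map sends this generator to $g$, and verify it is an isomorphism on all of $\pi_\ast$ using that the inputs to the tower agree for $X$ and $S^0$. The compatibility of the whole comparison with the injective Hurewicz map into $\pi_\ast L_1 E_2$ (where the map is the identity) controls the degree-$0$ behaviour and rules out a discrepancy in the extension.

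The main obstacle will be the explicit determination of $\pi_\ast L_1\egone$, that is, the assembly of the $L_1$-localized finite fixed-point spectra through the four-stage tower and the precise control of the $\psi^4-1$ action, since one must check both that no contribution outside degree $0$ produces a spurious unit and that the relevant extension yields a torsion-free $\ZZ_3$ rather than a finite group. A secondary point deserving care is that for $X\in\kappa_2\setminus\kappa_2^0$ the spectrum $\egone\wedge X$ need not split as in Lemma \ref{egone}, so the identification of $\pi_\ast L_1(\egone\wedge X)$ with the answer for $S^0$ must be argued through the homology inputs to the resolution rather than by an equivalence of spectra.
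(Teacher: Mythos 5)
The paper does not actually prove this statement: it is quoted verbatim as Theorem 5.10 of \cite{GHM}, so there is no in-text argument to compare against. Judged on its own terms, your proposal has the right architecture --- reduce everything to showing $\pi_0L_1X\cong\ZZ_3$, generated by a class hitting the unit of the torsion-free group $\pi_0L_1E_2\cong\pi_0L_1L_{K(2)}(E_2\wedge X)$, and compute $\pi_0L_1X$ by applying $L_1$ (which is smashing, hence preserves the fibre sequence) to $X\to\egone\wedge X\xrightarrow{\psi^4-1}\egone\wedge X$ and feeding in the tower (\ref{key-tower-1}). But the proposal stops exactly where the content of the theorem begins. The assertion $\pi_0L_1X\cong\ZZ_3$ is, already for $X=L_{K(2)}S^0$, the degree-zero part of the chromatic splitting computation that is the main theorem of \cite{GHM}; it requires knowing $\pi_*L_1E_2^{hG_{24}}$ and $\pi_*L_1E_2^{hSD_{16}}$ in a range, assembling them through a four-stage tower, and controlling both the $\psi^4-1$ action and the extension problems. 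You name this as ``the main obstacle'' rather than resolving it, so what you have written is an outline, not a proof.

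Two specific points would need real arguments. First, you must produce the class $g\in\pi_0L_1X$ detected by $\iota_X$ before you can say it generates: the remark that ``after $L_1$ the differentials $d_5,d_9$ die because they carry factors of $\beta$'' is a heuristic about a spectral sequence (an $L_1$-localized Adams--Novikov or tower spectral sequence) that you have not constructed, and whose convergence and differentials are not automatic. Second, for $X\in\kappa_2\setminus\kappa_2^0$, Lemma \ref{lem-inj-1} gives $E_2^{hG_{24}}\wedge X\simeq\Sigma^{24k}E_2^{hG_{24}}$ with $k\neq 0$ possible, so the layers of the tower for $\egone\wedge X$ are genuinely shifted; identifying $\pi_0L_1(\egone\wedge X)$ with $\pi_0L_1\egone$ then requires either a proof that $L_1E_2^{hG_{24}}$ is $24$-periodic (plausible, since $\Delta$ should survive once $\beta$ is killed, but not established anywhere in this paper) or a separate computation for each $k$. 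Until these are supplied, the injectivity of the Hurewicz map, the identification $\pi_0L_1X\cong\ZZ_3$, and the final equivalence $L_1L_{K(2)}S^0\simeq L_1X$ all remain unproved.
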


{\bf The Proof of Theorem \ref{picla2}.}
Let $X \in \kappa_2$ and choose an isomorphism
$\phi:(E_2)_\ast \to (E_2)_\ast X$ of
twisted $\GG_2$-modules. Theorem 
\ref{chrom-split-1-redux} now gives an object
$$
(L_1S^0,X,f) \in \LKM
$$
with $f$ the composition
$$
\xymatrix{
L_1S^0 \rto^-{L_1\eta} & L_1L_{K(2)}S^0 \rto &L_1 X.
}
$$
To construct an inverse of $(L_1S^0,X,f)$, let $Y \in \kappa_2$
be an inverse for $X$ and let $\psi: (E_2)_\ast \to (E_2)_\ast Y$
be the isomorphism of twisted $\GG_2$-modules
determined requiring the following composition to be the identity
$$
\xymatrix{
(E_2)_\ast \cong (E_2)_\ast \otimes_{(E_2)_\ast} (E_2)_\ast
\rto^-{\phi \otimes \psi}_-{\cong} &
(E_2)_\ast X \otimes_{(E_2)_\ast} (E_2)_\ast Y \rto^\cong &
(E_2)_\ast (X \wedge Y) \cong (E_2)_\ast.
}
$$
Construct $g:L_1S^0 \to L_1Y$ as we constructed the map $f$.
Then the resulting triple $(L_1S^0,Y,g)$ is the needed inverse. 
For this we need to check that the map
$$
f \ast g: L_1S^0 \to L_1L_{K(2)}(X \wedge Y) \simeq L_1L_{K(2)}S^0
$$
is the standard map $L_1\eta$. But this follows from the choice of $\psi$.

\bibliographystyle{amsplain}
\bibliographystyle{amsplain}
\bibliography{bibghm}
\bigbreak
\bigbreak

\bigbreak

%\begin{thebibliography}{GHMR1}
%\end{thebibliography}
\end{document}